\documentclass[11pt, letterpaper]{amsart}
\usepackage{graphicx}
\usepackage{amsbsy,amsmath,amsfonts,amssymb,amscd}
\usepackage[utf8]{inputenc}
\usepackage{amssymb}

\usepackage[hyperfootnotes=false, colorlinks=true, linkcolor=blue]{hyperref}
\usepackage{mathrsfs}
\usepackage{epsfig, graphicx}
\usepackage{graphics,epsfig}
\usepackage{mathrsfs}
\usepackage{amsthm}
\usepackage{mathtools}
\usepackage[dvipsnames]{xcolor}
\usepackage{changes}
\usepackage[margin=1in]{geometry}
\DeclareUnicodeCharacter{211D}{$\mathbb{R}$}

\title{Quantitative Stability of Aggregation-Diffusion under Local Attraction}
\author[Yunhong Deng]{Yunhong Deng}
\address{Department of Mathematics, Statistics and Computer Science, University of Illinois, Chicago, Illinois 60607, USA}
\email{ydeng43@uic.edu}
\keywords{Aggregation-diffusion, Wasserstein gradient flow, Optimal transport}
\subjclass[2010]{35A15, 35Q82, 37B35, 35K65, 49R05, 49J40}

\date{October 7, 2026}

\newtheorem{theorem}{Theorem}
\newtheorem{lemma}[theorem]{Lemma}

\newtheorem{proposition}[theorem]{Proposition}

\theoremstyle{definition}
\newtheorem{definition}{Definition}
\theoremstyle{definition}
\newtheorem{remark}{Remark}
\theoremstyle{definition}

\theoremstyle{definition}
\newtheorem{assumption}{Assumption}

\newcommand{\ud}{\,\mathrm{d}}

\begin{document}
\begin{abstract}
In this note, we establish the quantitative stability of nonlinear aggregation-diffusion energy for compactly supported steady states. Our consideration focuses on the degenerate diffusion with local attraction that are asymptotically flat at infinity, such as Riesz kernels. Our main result characterizes the spectral gap of the Wasserstein Hessian at steady states.
\end{abstract}

\maketitle
\section{Introduction}
We consider the aggregation-diffusion energy defined for a probability density $\varrho$ given as below,
\begin{equation}\label{0}
    \mathcal{F}(\varrho)
    =
    \underbrace{\frac{1}{m-1} \int_{\mathbb{R}^d}\varrho^m\ud x}_{\text{internal energy}}
    +
    \underbrace{\frac12
    \int_{\mathbb{R}^d}
    (W \ast \varrho) \times \varrho  \ud x}_{\text{interaction energy}}.
\end{equation}
We focus on the diffusion-dominate regime \cite{carrillo2018ground, delgadino2022uniqueness} with $m \ge 2$, and a radial increasing local attraction kernel $W$. This free energy functional is closely related to the following aggregation-diffusion equation \cite{carrillo2019aggregation} which describes the collective particle dynamics under two competing effects of the internal and interaction energies:
\begin{equation}\label{0-1}
    \frac{\partial \varrho}{\partial t} - \Delta \varrho^{m} - \mathrm{div}(\varrho \nabla W \ast \varrho) = 0 \text{ on } \mathbb{R}^{d},
\end{equation}
since seminal works near the incipient twenty-first century \cite{JKO, otto2001geometry, carrillo2003kinetic}.

These seminal works gave rise to what is now known as the gradient flow formulation in Wasserstein spaces $\mathscr{P}_{2}(\mathbb{R}^{d})$, with the the 2-Wasserstein distance
\begin{equation*}
    W_{2}(\mu, \omega) = \inf \bigg\{\int_{\mathbb{R}^d} |S(x) - x|^2 \ud x : S\# \mu = \omega\bigg\},
\end{equation*}
where $\#$ is the push-forward operator. For preliminaries on optimal transport, we refer the reader to Section~\ref{sec:2-1-optimal} below. In this gradient flow formulation, one regards~\eqref{0-1} as continuity equation along the negative Wasserstein gradient of the energy functional~\eqref{0}
\begin{equation}\label{0-2-gradient}
    \frac{\partial \varrho}{\partial t} - \mathrm{div}(\varrho D_{W}\mathcal{F}(\varrho)) = 0 \text{ on } \mathbb{R}^{d},\quad D_{W}\mathcal{F}(\varrho) = \nabla \bigg[\frac{m}{m - 1} \varrho^{m - 1} + W \ast \varrho\bigg].
\end{equation}
At the time-discrete level, this gradient flow structure can be either seen from the gradient descent (forward Euler scheme)
\begin{equation*}
    \varrho(\cdot + \tau) \approx \big(\mathrm{id} -\tau D_{W} \mathcal{F}(\varrho(t))\big) \# \varrho(t)
\end{equation*}
where the continuity equation can be regarded as the infinitesimal push-forward \cite[Proposition 8.1.8]{ambrosio2008gradient}, or the minimizing movement scheme \cite{JKO} (backward Euler scheme)
\begin{equation*}
    \varrho(\cdot + \tau) \approx \arg\min \mathcal{F}(\varrho) + \frac{1}{2\tau}W_2(\varrho, \varrho(t))^2.
\end{equation*}
And the functional~\eqref{0} act as the Lyapunov functional of the dynamical system~\eqref{0-1} with a form of energy dissipation given as follows,
\begin{equation*}
    \frac{\mathrm{d}\mathcal{F}(\varrho)}{\mathrm{d}t} = -\int_{\mathbb{R}^d} |D_W \mathcal{F}(\varrho)|^2 \ud \varrho \le 0
\end{equation*}
where $\varrho$ is the solution to the aggregation-diffusion flow~\eqref{0-1}.

\subsection{Classical models} Gradient flow formulation in Wasserstein spaces was originally introduced to analyze the long-time behavior of porous media diffusion \cite{otto2001geometry}. For convex attraction kernels\footnote{If $W$ is convex and non-constant, then it is forced to grow to infinity when $|x| \to \infty$.} such as $W(x) = |x|^3$, the long-time behavior of the aggregation-diffusion system~\eqref{0-1} has been well-understood following the seminal work \cite{carrillo2003kinetic}, in which Carrillo--McCann--Villani proved that, for all $\varrho \in \mathscr{P}_{2, \mathrm{ac}}(\mathbb{R}^{d})$,
\begin{equation}\label{1-2-transport}
    \frac{\mathrm{d}^{2}}{\mathrm{d}t^2}\Big|_{t = 0}\mathcal{F}\big((\mathrm{id} + tX)\# \varrho\big) \ge \beta \|X\|^{2}_{L^{2}(\mathbb{R}^{d},\, \varrho)},\quad \text{ for all } X \in T_{\varrho}\mathscr{P}_{2}(\mathbb{R}^{d}) \text{ and } \int_{\mathbb{R}^{d}} X\ud \varrho = 0
\end{equation}
for some positive constant $\beta$, based on the displacement convexity of the energy functional \cite{mccann1997convexity} (also see \cite[Chapter 20]{villani2008optimal}). Specifically, when $W$ is strongly convex, $\beta$ is given by the lower bound of $D^{2}W$. $T_{\varrho}\mathscr{P}_{2}(\mathbb{R}^{d})$ is the tangent space of the Wasserstein space (see Definition~\ref{def_4_tangent} below). Building on the second-variation, Carrillo--McCann--Villani \cite{carrillo2003kinetic} concluded the exponential convergence towards steady states, buiding on the following transport inequality and log-Sobolev inequality:
\begin{equation*}
    \inf_{a \in \mathbb{R}^{d}} \frac{\beta}{2} W_{2}(\tau_{a}\omega, \mu)^{2} \le \mathcal{F}(\omega) - \mathcal{F}(\mu) \le \frac{1}{2\beta} \big\|D_{W}\mathcal{F}(\omega)\big\|_{L^{2}(\mathbb{R}^{d},\, \omega)}^{2},\quad \text{for all $\omega$ in $\mathscr{P}_{2, \mathrm{ac}}(\mathbb{R}^{d})$,}
\end{equation*}
where $\mu$ is a minimizer of the aggregation-diffusion energy, and $\tau_{a}$ denotes the translation by vector $a$. It is notable that the aggregation-diffusion energy~\eqref{0} is invariant under the action of translation. Hence, one can find a family of minimizers $\{\tau_{a}\mu\}_{a \in \mathbb{R}^d}$, and the minimizer $\mu$ is called integrable (see, for example, \cite{carlotto2015slowly, chodosh2022riemannian}).

Extensive study interest afterwards was paid to non-convex kernels $W$ such as Riesz and Newton kernels, defined as the inverse of fractinoal Laplacian $W\ast \varrho := (\Delta)^{-\alpha/2}\varrho$ for some positive exponent $\alpha$. A standard model investigated extensively is the Patlak-Keller-Segel equation \cite{blanchet2008convergence, blanchet2008infinite, bedrossian2010intermediate, bedrossian2011global, kim2012patlak, carlen2013stability} on $\mathbb{R}^{2}$ defined as
\begin{equation*}
    \frac{\partial \varrho}{\partial t} - \Delta \varrho - \frac{1}{2\pi}\mathrm{div}(\varrho \nabla (\log|x| \ast \varrho)) = 0.
\end{equation*}
It can be seen as the Wasserstein gradient flow of the following energy
\begin{equation*}
    \mathcal{F}(\varrho) = \int_{\mathbb{R}^{2}} \varrho \log(\varrho) \ud x + \frac{1}{4\pi}\int_{\mathbb{R}^{2}} (\log|x| \ast \varrho) \times \varrho \ud x.
\end{equation*}
This model admits a well-known $8\pi$ critical mass phenomenon (see, for example, \cite[Section 2.1]{carrillo2019aggregation}): For solution started with mass less than $8 \pi$, its behavior is dominated by the diffusion part, where the solution $\varrho(t)$ will spread out, and converge weakly to $0$ as $t \to \infty$. For solution started with mass larger than $8 \pi$, its behavior is dominated by the aggregation part, where the solution $\varrho(t)$ will blow-up in finite time. Something interesting will happen in the $8\pi$ critical mass regime, where the two forces balance with each-other, and one can get a global existence of smooth solutions. In this case, there exists stationary solutions
\begin{equation*}
    \varrho_{a, \beta}(x) = \frac{8\beta^2}{(|x - a|^2 + \beta^2)^2}
\end{equation*}
and this class is closed under translation and dilation. One recent work by Buseghin--Collot \cite{buseghin2026determination} performs a delicate calculation in the critical mass regime, where they showed, for initial value with finite second momentum, the solution will blow-up at $\infty$-time.

\subsection{Background}
To understand the long-time behavior of a gradient flow system~\eqref{0-1}, a first step would be investigating the landscape of the energy functional $\mathcal{F}(\varrho)$ on $\mathscr{P}_{2}(\mathbb{R}^{d})$.

There are rich results supporting the existence of steady states, and their qualitative properties. In \cite{carrillo2016nonlinear}, Carrillo--Hittmeir--Volzone--Yao established their well-known radial-symmetry result of steady states, and conclude the uniqueness\footnote{Hereinafter, the uniqueness refers to being unique up to translation.} of steady-states building on \cite{carrillo2015ground, kim2012patlak}. The uniqueness of steady-states implies the long-time convergence of a nonlinear Keller-Segel model with Newton kernel in $2$-dimension without relying on geodesic convexity. In \cite{carrillo2018ground}, Carrillo--Hoffmann--Mainini--Volzone later considered ground steady states of~\eqref{0} with degenerate diffusion and Riesz kernels in the diffusion dominant regime. They proved the existence of minimizers, and characterized them as radial decreasing, compactly supported and smooth inside their support interior, and established their uniqueness in one-dimension for $m \ge 2$.

Carrillo--Delgadino--Patacchini \cite{carrillo2019existence} provided the existence of ground steady states for general aggregation-diffusion energies. Compared to degenerate diffusion where $m > 1$, they addressed the linear and fast diffusion where $m \le 1$, and in this case, the diffusion effects is harder to confine. Furthermore, they proved a logarithmic growth of attraction kernel is sharp to confine the linear diffusion. The general spirit in their proof for linear and fast diffusion is a compactness of energy sub-level set due to the concentration (see, for example, Lion's concentration compactness principal \cite{lions1984concentration, lions1984concentration-2}).

The current work is based on the result of Delgadino--Yan--Yao \cite{delgadino2022uniqueness}, in which they proved the uniqueness of steady states of aggregation-diffusion energy for a wide-class of attraction-kernels in the diffusion dominated regime when $m \ge 2$, and constructed infinite many solutions for $1 < m < 2$ for selected kernels.

Let us summarize their assumption of $W$, which is also our starting assumption. Their method in proving uniqueness is a height interpolation of radial decreasing densities, along which (together with the following assumptions) they showed that aggregation-diffusion energy~\eqref{0} is convex. They then concluded the uniqueness as the corollary of the convexity.

\begin{assumption}[attraction kernel]\label{assumption}We assume that the attraction kernel satisfies the following.
\begin{enumerate}
    \item $W(x)$ is radial increasing, in the sense that $W(x) = W_{\mathrm{ra}}(|x|)$ for some (strictly) increasing function $W_{\mathrm{ra}} : (0, \infty) \to \mathbb{R}$.
    \item $W(x)$ is $C^{\infty}$ on $\mathbb{R}^{d} - \{0\}$.
    \item $W$ is no more singular than the critical Riesz kernel in the sense that $W_{\mathrm{ra}}^{\prime}(r) \le C_{W} r^{k - 1}$ for all $0 < r < 1$ for some $k > -d$ and $C_{W} \ge 0$.
    \item There exists a constant $C_{W}$ such that $W^{\prime}_{\mathrm{ra}}(r) \le C_{W}$ for all $r \ge 1$.
    \item $W_\mathrm{ra}(r)$ satisfies the doubling hypothesis
    \begin{equation*}
        W_{\mathrm{ra}}(a+ b)_+ \le C_{W} (1 + W_{\mathrm{ra}}(1 + a)_+ + W_{\mathrm{ra}}(1 + b)_+),
    \end{equation*}
    for all $a, b > 0$ with a constant $C_{W} \ge 0$, where $W_{\mathrm{ra}}(a)_+$ is the positive part.
\end{enumerate}
\end{assumption}

Under these assumptions, Delgadino--Yan--Yao \cite{delgadino2022uniqueness} provided the following characterization of equilibrium, combing the results in \cite{carrillo2016nonlinear, carrillo2018ground}.
\begin{theorem}\label{proposition-3-steady}For the aggregation-diffusion energy~\eqref{0} with $m \ge 2$, the minimizer $\mu$ centered at $0$ satisfies the following:
\begin{enumerate}
    \item $\mu$ is continuous, compactly supported, symmetric, smooth inside support interior
    \item $\mu$ is strictly radial decreasing in the support interior, and positive inside support interior
    
    \item $\mu$ is non-degenerate at free boundary in the sense that $\mu(x) \sim \delta(x, \mathbb{R} - \Omega)^{\frac{1}{m - 1}}$ where $\Omega$ is the support of $\mu$, and $\delta$ is the distance.
\end{enumerate}
\end{theorem}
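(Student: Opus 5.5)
The plan is to assemble the statement from the Euler--Lagrange characterization of the minimizer together with the qualitative results of \cite{carrillo2016nonlinear, carrillo2018ground, delgadino2022uniqueness}.

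\textbf{Existence, symmetry and compact support.} Existence of a minimizer $\mu$ in the diffusion-dominated regime follows from \cite{carrillo2019existence, carrillo2018ground}. Assumption~\ref{assumption}(4)--(5) keeps the interaction energy bounded below and gives tightness of minimizing sequences. Assumption~\ref{assumption}(3) guarantees that $W\ast\varrho$ is controlled by $\|\varrho\|_{L^m}$ since $m\ge2$. Radial symmetry and monotonicity of $\mu$ about its center come from two sources. Riesz' rearrangement inequality shows that the symmetric decreasing rearrangement does not increase $\mathcal F$, because $W$ is radially increasing. Alternatively, the continuous Steiner symmetrization argument of \cite{carrillo2016nonlinear} shows that every steady state is radial up to translation. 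Compact support follows from the Euler--Lagrange condition
\begin{equation*}
\frac{m}{m-1}\mu^{m-1}+W\ast\mu = C \ \text{ on } \operatorname{supp}\mu,\qquad \ge C \ \text{ elsewhere}.
\end{equation*}
Since $W$ is strictly increasing, $W\ast\mu(x)\to\sup W$ as $|x|\to\infty$. If the support were unbounded, $\mu^{m-1}$ would have to stay positive along a sequence with $W\ast\mu$ increasing to its supremum. A quantitative comparison with a truncated mass, as in \cite{carrillo2018ground}, rules this out.

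\textbf{Regularity and strict monotonicity.} From the Euler--Lagrange identity, $\mu=\big(\frac{m-1}{m}(C-W\ast\mu)\big)_+^{1/(m-1)}$. Assumption~\ref{assumption}(3) makes $W\ast\mu$ H\"older continuous whenever $\mu\in L^\infty$, and boundedness of $\mu$ follows by a bootstrap from $L^m$. Hence $\mu$ is continuous. Inside the support interior we have $\mu>0$, so the map $s\mapsto s^{1/(m-1)}$ is smooth there. Using Assumption~\ref{assumption}(2), we bootstrap by splitting $W=W\chi_{B_\epsilon}+W(1-\chi_{B_\epsilon})$; the far part is $C^\infty$, and the near part gains derivatives as a convolution operator with a kernel of order $k>-d$. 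Strict decrease of $\mu$ in the interior comes from strict decrease of $C-W\ast\mu$. For a radial nonnegative nontrivial $\mu$ and strictly radially increasing $W$, the function $W\ast\mu$ is strictly radially increasing: differentiate in $r$ and pair points symmetric about the hyperplane orthogonal to $x$.

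\textbf{Boundary nondegeneracy.} This is the main obstacle. Writing $\Omega=B_R$, we have $\mu^{m-1}=\frac{m-1}{m}(C-W\ast\mu)$ and $C-W\ast\mu$ vanishes at $r=R$. We need $\partial_r(W\ast\mu)(R)>0$, which gives $C-W\ast\mu\sim R-|x|$ and hence $\mu\sim\delta(x,\mathbb R^d-\Omega)^{1/(m-1)}$. Positivity of the radial derivative at the boundary again follows from the reflection argument: $\partial_r(W\ast\mu)(R)=\int \nabla W(Re_1-y)\cdot e_1\,\mu(y)\,dy$. Pairing $y$ with its reflection across $\{y_1=0\}$ gives a strictly positive integrand on a set of positive measure, because $W_{\mathrm{ra}}'\ge0$ is not identically zero. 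The integral also converges near the singularity, since $|\nabla W|\lesssim r^{k-1}$ with $k>-d$ is locally integrable against bounded $\mu$ only if $k-1>-d$. When $k-1\le -d$, we instead need the H\"older regularity of $\mu$ near the boundary, where $\mu$ vanishes, to make sense of the derivative. This delicate case is where I expect to rely on the estimates of \cite{carrillo2018ground, delgadino2022uniqueness}.
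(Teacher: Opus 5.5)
The paper gives no proof of this theorem. It is quoted from Delgadino--Yan--Yao, who combine Carrillo--Hittmeir--Volzone--Yao and Carrillo--Hoffmann--Mainini--Volzone. Your assembly from the Euler--Lagrange relation $\frac{m}{m-1}\mu^{m-1}=(C-W\ast\mu)_+$ and those references follows the same route. Your reflection arguments for strict monotonicity and a positive boundary slope are the standard ones, and they close when $\nabla W\in L^1_{\mathrm{loc}}$. Note that the interior pairing needs $\mu$ radially nonincreasing, not merely radial.

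The genuine gap is the case you postpone, $-d<k\le 1-d$. It cannot be filled from the literature, because item (3) is false there in general. Take the Riesz kernel $W=|x|^k/k$, which the paper explicitly admits, with $m>2$ and $d+k+\frac{1}{m-1}\le 1$. By Carrillo--Hoffmann--Mainini--Volzone, the minimizer exists, is radially decreasing, is supported in $\overline{B_R}$, and satisfies the Euler--Lagrange relation. Suppose $\mu^{m-1}\sim\delta$. For $r<R$, writing $\bar z=(-z_1,z')$,
\begin{equation*}
\partial_r(W\ast\mu)(re_1)=\int_{\{z_1>0\}}|z|^{k-2}z_1\big[\mu(re_1-z)-\mu(re_1-\bar z)\big]\ud z ,
\end{equation*}
and the integrand is nonnegative. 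The steps are:
\begin{enumerate}
\item Letting $r\to R^-$, Fatou's lemma gives $\liminf_{r\to R^-}\partial_r(W\ast\mu)(re_1)\ge\int_{\{z_1>0\}}|z|^{k-2}z_1\,\mu(Re_1-z)\ud z$, since $\mu(Re_1-\bar z)=0$ for $z_1>0$.
\item On the cone $\{z_1\ge|z|/2,\ |z|\le R/2\}$ one has $\delta(Re_1-z)\ge|z|/4$. The lower half of $\mu\sim\delta^{1/(m-1)}$ then makes the integrand $\gtrsim|z|^{k-1+\frac{1}{m-1}}$, which is not integrable at $0$.
\item Hence $\frac{m}{m-1}\mu^{m-1}((R-\epsilon)e_1)=W\ast\mu(Re_1)-W\ast\mu((R-\epsilon)e_1)$, and this divided by $\epsilon$ tends to $\infty$. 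That contradicts the upper half $\mu^{m-1}\lesssim\delta$.
\end{enumerate}
Heuristically, the actual boundary profile in this range is $\mu\sim\delta^{(d+k)/(m-2)}$.

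So what closes your argument is a hypothesis rather than an estimate. Carrillo--Hittmeir--Volzone--Yao and Delgadino--Yan--Yao work under the Newtonian-type bound $W_{\mathrm{ra}}'(r)\le C_W r^{1-d}$; merely $k>1-d$ would also do. Under either, $\nabla W\ast\mu$ is continuous and $\partial_r(W\ast\mu)(R)=\int W_{\mathrm{ra}}'(|Re_1-y|)\frac{R-y_1}{|Re_1-y|}\mu(y)\ud y>0$. No pairing is needed at this point, since $R-y_1\ge 0$ on $\operatorname{supp}\mu$. Then $C-W\ast\mu\sim R-|x|$ near the boundary, and (3) follows.

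Your interior bootstrap needs the same restriction. An upper bound $W_{\mathrm{ra}}'\lesssim r^{k-1}$ with $k\le 1-d$ does not by itself give smoothing of order $d+k$. By contrast, $\nabla W\in L^1_{\mathrm{loc}}$ gives a one-derivative gain after localizing $\mu$ and moving derivatives onto it. The remaining steps (existence and compact support) you defer to the literature, exactly as the paper does.
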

\begin{remark}
As noted in \cite[Section 1.2]{delgadino2022uniqueness}, assumptions (3) and (4) in Assumption~\ref{assumption} are used only to ensure all steady states are radially decreasing and compactly supported (Theorem~\ref{proposition-3-steady}), these two assumptions play no role in their uniqueness proof.
\end{remark}

\begin{assumption}\label{assumption-1}We additionally assume the following:
\begin{enumerate}
    \item Non-degeneracy: $\lim_{r \to 0} W^{\prime }_{\mathrm{ra}}(r)/r > 0$.
    \item Regularity: $|W^{\prime \prime}_{\mathrm{ra}}(r)| \lesssim r^{k - 2} \text{ for } 0 < r \le 1$.
\end{enumerate}
The non-degeneracy assumption rules out kernels $W$ that are degenerately convex at origin, which reduces technical difficulties.\footnote{However, degenerately convex kernels constitute an important class of models, which was considered in \cite[Section 4.3]{carrillo2003kinetic}. Removing the non-degeneracy assumption will be the future direction.} The regularity assumption is only used to prove the self-adjointness of the Wasserstein Hessian in Theorem~\ref{theorem:2_spectrum}.

These assumptions interoperate the Riesz kernels $W(x) = |x|^k/k$ with $-d < k < 0$.
\end{assumption}

A natural question, following their uniqueness result, is that how stable are these steady-states. For example, whether the energy gap
\begin{equation*}
    \mathcal{F}(\omega) - \mathcal{F}(\mu)
\end{equation*}
controls a distance between $\omega$ and $\mu$. Here a finite dimensional analogy is the Lojasiewicz distance inequality which characterizes a stable critical point $x_{0}$ of a real analytical function $f$ using (see, for example, \cite{feehan2019resolution}),
\begin{equation*}
    f(x) - f(x_{0}) \ge \beta |x - x_{0}|^{\theta} \text{ for } x \in B_{\sigma}(x_{0}),
\end{equation*}
with positive $\beta$, $\sigma$, and $\theta \ge 2$, where the Lojasiewicz exponent $\theta$ reflects the degree of degeneration of the critical point (the Adam-Simon positivity condition, see for example \cite{carlotto2015slowly}). This Lojasiewicz distance inequality together with the counterpart gradient inequality are critical in uniqueness and quantitative stability problems in geometric analysis (for example, \cite{colding2015mean, engelstein2022quantitative, chodosh2022riemannian, choi2025wasserstein, deng2026phase}).

It would be important that such an quantitative stability result comes from an optimal transport characterization, because under this setting, the stability would be closely related to quantitative convergence of the aggregation-diffusion flow~\eqref{0-1}, and the quantitative convergence has been questioned in various places, for example, \cite{carrillo2016nonlinear, carlen2025stability}.

However, the quantitative stability is much less complete than the qualitative picture described above. In the pure interaction case, Yan--Yao \cite{yan2022sharp} proved a quantitative stability which bounds the energy gap $\mathcal{F}(\omega) - \mathcal{F}(\mu)$ below by $W_{2}(\omega, \mu)^2$. In \cite{carlen2025stability}, Carlen--Lewin--Lieb--Seiringe established a sharp quantitative Lane-Emden inequality, and provided the energy deficit of aggregation-diffusion is bounded a discrepancy related to $L^{m/2}(\mathbb{R}^d)$ norm for $m \le 2$.

\subsection{Main results}
We focus on the quantitative stability of steady states of aggregation-diffusion energy~\eqref{0}. Let $\mu$ be the minimizer of the aggregation-diffusion energy~\eqref{0} centered at $0$, and $R > 0$ such that $\mathrm{supp}(\mu) \subseteq B_{R}(0)$. Following Yan--Yao \cite{yan2022sharp}, we define.
\begin{equation*}
    \Lambda_{R} = \min_{0 \le r \le 2R} \frac{W^{\prime}_{\mathrm{ra}}(r)}{r},
\end{equation*}
as the \textit{least attraction} constant, where $W_{\mathrm{ra}}$ is the radial profile of the kernel $W$ defined such that $W_{\mathrm{ra}}(x) = W(|x|)$. Since $W_{\mathrm{ra}}$ is increasing and non-degenerate at origin, the constant $\Lambda_{R}$ is always strictly positive. For Riesz kernels $W(x) = |x|^k/k$ where $-d < k < 0$, we have $\Lambda_R = (2R)^{k-2}$.

We first establish the following transport inequality, a variational characterization of transversal coercivity in Wasserstein spaces.
\begin{theorem}[transport inequality]\label{theorem:1}
For all $\omega$ supported in $B_{R}(x_{\omega})$ for some $x_{\omega}$ in $\mathbb{R}^{d}$, we have
\begin{equation*}
    \mathcal{F}(\omega) - \mathcal{F}(\mu) \ge \Lambda_{R} \inf_{a \in \mathbb{R}^{d}}\frac{ W_{2}(\tau_{a}\omega, \mu)^{2}}{4},
\end{equation*}
where $\tau_{a}$ is the translation by vector $a$.
\end{theorem}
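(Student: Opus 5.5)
The plan is to combine the Delgadino--Yan--Yao height-interpolation convexity with the Yan--Yao least-attraction argument. First, reduce to radially decreasing competitors by symmetric decreasing rearrangement. The internal energy $\frac{1}{m-1}\int\omega^m$ is invariant under rearrangement. By Riesz's rearrangement inequality, since $W$ is radially increasing, the interaction energy does not increase. So $\mathcal{F}(\omega^*)\le\mathcal{F}(\omega)$.

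The distance side needs more care. We need $\inf_a W_2(\tau_a\omega,\mu)$ controlled by $W_2(\omega^*,\mu)$ plus a quantity charged to the energy drop $\mathcal{F}(\omega)-\mathcal{F}(\omega^*)$. For this I would follow Yan--Yao \cite{yan2022sharp} directly. Since $\omega$ is supported in a ball of radius $R$ and $\mu$ in $B_R(0)$, all pair distances $|x-y|$ that occur are at most $2R$. On $[0,2R]$ we have $W_{\mathrm{ra}}'(r)\ge\Lambda_R r$, so $W$ dominates the quadratic kernel $\frac{\Lambda_R}{2}|x|^2$ in the sense that $W-\frac{\Lambda_R}{2}|\cdot|^2$ is still radially nondecreasing on $[0,2R]$. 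Splitting $W=(W-\frac{\Lambda_R}{2}|\cdot|^2)+\frac{\Lambda_R}{2}|\cdot|^2$ and using convexity of the quadratic part gives the needed strict-convexity gap along interpolations.

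The core step is a convexity estimate for the energy along a suitable interpolation $\omega_t$ between $\tau_a\omega$ (or $\omega^*$) and $\mu$. For the diffusion part I would use the height interpolation of \cite{delgadino2022uniqueness}. Its convexity for $m\ge 2$ gives $\frac{1}{m-1}\int\omega_t^m\le(1-t)(\cdot)+t(\cdot)$. For the quadratic part of the interaction, $\frac{\Lambda_R}{4}\iint|x-y|^2\,\mathrm{d}\varrho\,\mathrm{d}\varrho=\frac{\Lambda_R}{2}\bigl(\int|x|^2\,\mathrm{d}\varrho-|\int x\,\mathrm{d}\varrho|^2\bigr)$. Along the displacement interpolation from $\tau_a\omega$ to $\mu$, with $a$ chosen to match centers of mass, this is exactly $\lambda$-convex with modulus $\frac{\Lambda_R}{2}\cdot W_2^2$. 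For the remaining part $W-\frac{\Lambda_R}{2}|\cdot|^2$, the convexity from \cite{delgadino2022uniqueness} applies, since only radial monotonicity on the relevant range is used.

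Given a convexity inequality with a modulus
\begin{equation*}
\mathcal{F}(\omega_t)\le(1-t)\mathcal{F}(\omega_0)+t\mathcal{F}(\mu)-\tfrac{\Lambda_R}{4}t(1-t)\,\mathrm{dist}^2,
\end{equation*}
minimality of $\mu$ gives $\mathcal{F}(\omega_t)\ge\mathcal{F}(\mu)$. Dividing by $t$ and letting $t\to0$ then yields $\mathcal{F}(\omega_0)-\mathcal{F}(\mu)\ge\frac{\Lambda_R}{4}\mathrm{dist}^2$. The factor $\frac14$ in the statement suggests a loss of $1/2$ somewhere, for instance from a triangle inequality or from $t(1-t)$ with $t=1/2$ in the midpoint version.

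The main obstacle is that the two convexity structures live on different interpolations. The internal energy is convex along height interpolation, while the quadratic part is convex along displacement interpolation, and for radial decreasing densities these do not coincide in general. I would first try to verify that the quadratic term $\iint|x-y|^2$ is also convex along the height interpolation with the right modulus. This term is linear in $\varrho$ up to the center-of-mass correction, which vanishes for symmetric densities. One then needs to compare the resulting gap with $W_2^2$, using that for radial decreasing densities the height interpolant's second moment gap dominates the $W_2$ distance between nested level sets. If that comparison fails, the fallback is to work with the midpoint of the displacement interpolation for the interaction part. In that case one only needs displacement convexity of $\int\varrho^m$, which holds since $m\ge 2\ge 1-1/d$, together with the Riesz-rearrangement reduction. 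The trade-off is that the nonconvex remainder of $W$ must then be handled by the radial monotonicity trick on $[0,2R]$.
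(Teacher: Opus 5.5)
Your skeleton is the paper's. You bound $\mathcal{F}(\omega)-\mathcal{F}(\omega_\sharp)$ below by $\frac{\Lambda_R}{2}\inf_a W_2(\tau_a\omega,\omega_\sharp)^2$ by citing Yan--Yao. You run the height interpolation $\varrho(t)$ from $\mu$ to $\omega_\sharp$ with $W=\Lambda_R A+B$, $A=\frac12|x|^2$, so that $\mathcal{S}$ and $\mathcal{I}_B$ are convex by Delgadino--Yan--Yao. You lose the factor $\frac12$ via the triangle inequality and $(a+b)^2\le 2a^2+2b^2$.

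The gap is that the one step with quantitative content is left as something you ``would first try to verify''. That step is: along the \emph{height} interpolation, the quadratic part yields a $W_2^2$-gap, i.e.\ $D_{\mathcal{I},A}(H[\omega_\sharp],H[\mu])\ge\frac12W_2(\omega_\sharp,\mu)^2$. This is Lemma~\ref{lemma:7}, which the paper singles out as the core difficulty. Without it your argument contains no inequality involving $W_2(\omega_\sharp,\mu)$ at all. Linearity of $\int|x|^2\,\mathrm{d}\varrho$ in $\varrho$ does not help, since the interpolation is affine in $H$, not in $\varrho$.

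The missing argument has three pieces:
\begin{enumerate}
\item In the mass variable, $\mathcal{I}_A=\frac{d}{2(d+2)}\omega_d^{-2/d}\int_0^1 H'(s)^{-2/d}\,\mathrm{d}s$, a convex functional of $H'$.
\item The inequality $x^{-2/d}-1+\frac2d(x-1)\ge(x^{-1/d}-1)^2$ bounds its Bregman divergence below by $\frac{d}{2(d+2)}\int_0^1(R[\omega_\sharp](s)-R[\mu](s))^2\,\mathrm{d}s$.
\item The layer-cake identity $\varrho=\int_0^1U_{R[\varrho](s)}\,\mathrm{d}s$ writes $\varrho$ as a mixture of uniform probabilities on centred balls. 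Together with $W_2(U_{r_1},U_{r_2})^2=\frac{d}{d+2}(r_1-r_2)^2$ (the dilation is the Brenier map) and convexity of $W_2^2$ under mixtures, it gives $W_2(\omega_\sharp,\mu)^2\le\frac{d}{d+2}\int_0^1(R[\omega_\sharp](s)-R[\mu](s))^2\,\mathrm{d}s$.
\end{enumerate}
The energy side is then Taylor's formula along $\varrho(t)$, whose first-order term at the minimizer $\mu$ is nonnegative. This gives $\mathcal{F}(\omega_\sharp)-\mathcal{F}(\mu)\ge\Lambda_R D_{\mathcal{I},A}(H[\omega_\sharp],H[\mu])$, which is a Bregman-divergence bound, not $\lambda$-convexity along a geodesic.

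Your fallback does not repair this. The remainder $B=W-\frac{\Lambda_R}{2}|x|^2$ is radially increasing on $[0,2R]$ but not convex. As a result, $\mathcal{I}_B$ is not displacement convex even among radial densities. Take a Riesz kernel $W=|x|^k/k$ and a fixed radial $\varrho$. The dilations $\varrho_\lambda=\lambda^{-d}\varrho(\cdot/\lambda)$ form a Wasserstein geodesic affinely parametrized by $\lambda$, and
\[
\mathcal{I}_B(\varrho_\lambda)=\lambda^k\mathcal{I}(\varrho)-\Lambda_R\lambda^2\mathcal{I}_A(\varrho),\qquad \mathcal{I}(\varrho)<0,\ k<0,
\]
which is strictly concave in $\lambda$. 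Radial monotonicity gives convexity only along the height interpolation; that is why Delgadino--Yan--Yao introduced it. So there is no ``radial monotonicity trick'' on Wasserstein geodesics. The mismatch between the two interpolations, which you correctly identified, must be resolved by the comparison above rather than bypassed.

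A minor slip: your modulus inequality runs from $\omega_0$ at $t=0$ to $\mu$ at $t=1$. Minimality then gives $\mathcal{F}(\omega_0)-\mathcal{F}(\mu)\ge\frac{\Lambda_R}{4}t\,\mathrm{dist}^2$, so you must let $t\to1$, not $t\to0$.
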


We then provide a spectral lower-bound of the Wasserstein Hessian at equilibrium $\mu$, in analogous with~\eqref{1-2-transport}. Locating the spectrum of Wasserstein Hessian would be important for quantitative long-time convergence of~\eqref{0-1}, through the principle of linearization \cite{denzler2005fast, seis2014long}.%\footnote{In these two works, one can explicitly calculate the eigenvalues and eigenfunctions of Wasserstein Hessian based on the explicit form of Barenblatt profiles.}
\begin{theorem}[spectrum]\label{theorem:2_spectrum}There exists a unique self-adjoint operator $D^{2}_{W}\mathcal{F}(\mu)$ on $T_{\mu}\mathscr{P}_{2}(\mathbb{R}^{d})$ with
\begin{equation*}
    \frac{\mathrm{d}^{2}}{\mathrm{d}t^2}\Big|_{t = 0} \mathcal{F}\big((\mathrm{id} + tX)\# \mu\big) = \int_{\mathbb{R}^d}\big(X, D^{2}_{W}\mathcal{F}(\mu) X\big) \ud \mu
\end{equation*}
for all smooth vector fields $X$, and its spectrum is located as follows,
\begin{equation*}
    \sigma(D_{W}^{2}\mathcal{F}(\mu))
    \subseteq
    \{0\} \cup [\Lambda_{R}, +\infty).
\end{equation*}
The kernel of $D_{W}^{2}\mathcal{F}(\mu)$ is $d$-dimensional, which encodes the invariance under translation on $\mathbb{R}^d$. In other words, we have the following Poincar\'e inequality
\begin{equation*}
    \int_{\mathbb{R}^d}\big(X, D^{2}_{W}\mathcal{F}(\mu) X\big) \ud \mu \ge \Lambda_R \|X\|^{2}_{L^{2}(\mathbb{R}^{d},\, \mu)},\quad \text{ for all } \int_{\mathbb{R}^{d}} X\ud \mu = 0.
\end{equation*}
\end{theorem}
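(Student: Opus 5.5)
We first compute the second variation explicitly along $\mu_t = (\mathrm{id}+tX)\#\mu$ for smooth $X$. Since $\mu$ is smooth and positive in the interior of its support $\Omega$ and vanishes like $\delta^{1/(m-1)}$ at $\partial\Omega$, the standard displacement-Hessian formula for internal energies applies, and the interaction term is computed directly:
\begin{align*}
\frac{\mathrm{d}^2}{\mathrm{d}t^2}\Big|_{t=0}\mathcal{F}(\mu_t)
&= \int_{\mathbb{R}^d} \mu^{m}\Big[(m-1)(\mathrm{div} X)^2 + \mathrm{tr}\big((DX)^2\big)\Big]\ud x \\
&\quad + \frac12\iint \big(X(x)-X(y)\big)\cdot D^2W(x-y)\big(X(x)-X(y)\big)\ud\mu(x)\ud\mu(y).
\end{align*}
We then use the Euler--Lagrange equation $\frac{m}{m-1}\mu^{m-1} + W\ast\mu = C$ on $\Omega$. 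Equivalently, $\nabla \mu^m = -\mu\nabla W\ast\mu$ on $\Omega$. This lets us integrate by parts in the diffusion term, using the boundary decay of $\mu^m$ and the fact that $\mu^m\in C^1$ because $m\ge2$. The goal is to rewrite the whole quadratic form as
\[
Q(X)=\int \mu^m\,\big|DX - (\text{suitable antisymmetric/trace part})\big|^2 + \frac12\iint \big(X(x)-X(y)\big)\cdot\Big(D^2W(x-y)\Big)\big(X(x)-X(y)\big)\ud\mu\ud\mu + \text{cross terms}.
\]
The identity $\mathrm{tr}((DX)^2)-(\mathrm{div}X)^2 = \mathrm{div}\big((X\cdot\nabla)X - X\,\mathrm{div}X\big)$, combined with the Euler--Lagrange equation, converts the diffusion remainder into an interaction-type term $\iint (X(x)-X(y))\otimes(X(x)-X(y)) : \frac{W'(|z|)}{|z|}(\cdots)$. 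The convenient outcome is
\[
Q(X) \ge \int (m-1)\mu^m(\mathrm{div}X)^2 \ud x + \frac12\iint \frac{W'_{\mathrm{ra}}(|x-y|)}{|x-y|}\,|X(x)-X(y)|^2\ud\mu(x)\ud\mu(y),
\]
that is, the radial-direction part $W''$ of $D^2W$ is replaced by the tangential weight $W'(r)/r$. I expect this step to be the main obstacle: the cancellation must be arranged exactly, with the radial decrease of $\mu$ supplying the correct sign. If it fails, a fallback is to obtain the bound by differentiating the convexity inequality of Theorem~\ref{theorem:1} twice, since its proof via Yan--Yao-type interpolation should itself be a second-order argument.

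Given this inequality, the spectral bound follows. Since $|x-y|\le 2R$ on $\mathrm{supp}\,\mu\times\mathrm{supp}\,\mu$, we have $W'_{\mathrm{ra}}(|x-y|)/|x-y| \ge \Lambda_R$ there. Hence
\[
Q(X) \ge \frac{\Lambda_R}{2}\iint |X(x)-X(y)|^2\ud\mu\ud\mu = \Lambda_R\Big(\|X\|^2_{L^2(\mu)} - \Big|\int X\ud\mu\Big|^2\Big),
\]
which is the stated Poincar\'e inequality for fields with zero $\mu$-mean.

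For the operator statement, we define $D^2_W\mathcal{F}(\mu)$ as the Friedrichs extension of the symmetric form $Q$ on smooth fields, restricted to the closure in $T_\mu\mathscr{P}_2$. The form is bounded below by $0$. It is closable because the interaction part is a bounded form on $L^2(\mu)$: by Assumption~\ref{assumption-1}(2) and $k>-d$, $|D^2W|\lesssim |z|^{k-2}$, and Schur's test with $\mu\in L^\infty$ applies. Here $k>-d$ gives integrability of $|z|^{k}$, and the factor $|X(x)-X(y)|^2\lesssim |x-y|^2\|DX\|^2_\infty$ handles the extra singularity. The diffusion part is a nonnegative weighted Dirichlet form, so the form is closable and the extension is unique.

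Constant fields $X\equiv a$ give $Q(a)=0$ by translation invariance, so they lie in the kernel, which is therefore at least $d$-dimensional. For the orthogonal complement of the constants in $L^2(\mu)$, which is exactly the set of zero-mean fields, the Poincar\'e inequality gives $Q\ge\Lambda_R$ by the min-max principle. Thus the spectrum is contained in $\{0\}\cup[\Lambda_R,\infty)$, and the kernel is exactly the $d$-dimensional space of constants.
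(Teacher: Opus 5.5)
There is a genuine gap, exactly at the step you flagged. The ``convenient outcome''
\[
Q(X)\ge\int(m-1)\mu^m(\mathrm{div}X)^2\ud x+\frac12\iint\frac{W'_{\mathrm{ra}}(r)}{r}\,|X(x)-X(y)|^2\ud\mu(x)\ud\mu(y),\qquad r=|x-y|,
\]
is not merely unproven. It is false for the kernels this paper is about. Test it on the dilation field $X=\mathrm{id}$, a smooth gradient with zero $\mu$-mean. Testing the Euler--Lagrange equation $\nabla\mu^m=-\mu\nabla W\ast\mu$ against $x$ gives the virial identity $d\int\mu^m\ud x=\frac12\iint rW'_{\mathrm{ra}}(r)\ud\mu\ud\mu$. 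Since $DX=\mathrm{id}$ and $X(x)-X(y)=x-y$ is purely radial, the left side minus the right side equals
\[
\int\mu^m\,\mathrm{tr}\big((DX)^2\big)\ud x-\frac12\iint\Big(\frac{W'_{\mathrm{ra}}(r)}{r}-W''_{\mathrm{ra}}(r)\Big)r^2\ud\mu\ud\mu=\frac12\iint r^2W''_{\mathrm{ra}}(r)\ud\mu\ud\mu .
\]
For $W=|x|^k/k$ with $-d<k<0$ this is $\frac{k-1}{2}\iint r^k\ud\mu\ud\mu<0$. The test field is explicit, so neither integration by parts nor the radial monotonicity of $\mu$ can change this.

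Dropping the divergence term does not help. One computes $Q(\mathrm{id})=\frac{(m-1)d+k}{2}\iint r^k\ud\mu\ud\mu$. Hence even $Q(X)\ge\frac12\iint\frac{W'_{\mathrm{ra}}(r)}{r}|X(x)-X(y)|^2\ud\mu\ud\mu$ fails whenever $(m-1)d+k<1$, for instance $d=1$, $m=2$, any $k\in(-1,0)$. So a pointwise comparison with the weight $W'_{\mathrm{ra}}(r)/r$ cannot work. The room has to come from the crude bound $W'_{\mathrm{ra}}(r)/r\ge\Lambda_R$ itself, which is why the paper argues variationally rather than by manipulating the Hessian formula.

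Your fallback does not recover the stated constant either. Theorem~\ref{theorem:1} carries $\Lambda_R/4$, because a factor $2$ is lost in $2a^2+2b^2\ge(a+b)^2$. Differentiating it twice along $(\mathrm{id}+tX)\#\mu$ therefore gives only $Q(X)\ge\frac{\Lambda_R}{2}\|X\|^2_{L^2(\mu)}$ for mean-zero $X$.

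The paper avoids this loss by never adding the two estimates. Rotation invariance splits $T_\mu\mathscr{P}_2(\mathbb{R}^d)$ into three pieces that the Hessian does not mix: the radial sector $\mathscr{Y}_{\ell=0}$, the translation modes, and their complement.
\begin{itemize}
\item On $\mathscr{Y}_{\ell=0}$ it uses height perturbations and Proposition~\ref{prop_10-raidal}.
\item On the complement it uses the Yan--Yao reduction inequality (Proposition~\ref{eq:nonradial-stability}) together with Lemma~\ref{lemma_18}. The point of that lemma is that $W_2(\omega(t)_\sharp,\mu)=o(t)$ for such fields. So the reduction distance already captures all of $t\|Z\|_{L^2(\mu)}$, and the term $\mathcal{F}(\omega(t)_\sharp)-\mathcal{F}(\mu)\ge0$ can simply be discarded.
\end{itemize}
Each estimate has constant $\Lambda_R/2$ in front of $W_2^2$, which becomes $\Lambda_R$ in the Hessian.

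A secondary point concerns your operator construction. The interaction form is not bounded on $L^2(\mu)$ when $k\le 2-d$, since $|z|^{k-2}$ is then not locally integrable. Your Schur argument uses $\|DX\|_\infty$, so it only shows the form is finite on smooth fields. Closability must instead come from the form being generated by a semibounded symmetric operator on a dense domain, as in Lemma~\ref{lemma-18-dense} and the Friedrichs extension.
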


In the forthcoming work \cite{deng2026sharp}, the author have proved a exponential relaxation towards equilibrium for special compactly supported aggregation-diffusion equations~\eqref{0-1} with a suitable class of sublinear kernels. The method is the principle of linearization based on this spectral gap theorem. Compared to Carrillo–McCann–Villani \cite{carrillo2003kinetic}, we only characterized the Wasserstein Hessian at minimizer instead of a general measure. Hence the consideration of initial value is limited. %It is unclear to the author whether the spectrum of Wasserstein Hessian will be well-behaved in a local neighborhood of $\mu$. %In the current construction, we heavily rely on $\mu$ being radial decreasing, which is necessary for applying the height interpolation of Delgadino--Yan--Yao \cite{delgadino2022uniqueness}, and estimate high-order eigenvalues based on Yan--Yao \cite{yan2022sharp}.

For the future direction, it is interesting to investigate the spectrum of Wasserstein Hessian for a general compactly supported measure $\omega$ with non-degeneracy at support boundary. It would also be important to obtain Lojasiewicz gradient type inequalities (as that in~\eqref{1-2-transport}):
\begin{equation*}
    \mathcal{F}(\omega) - \mathcal{F}(\mu) \lesssim \big\|D_{W}\mathcal{F}(\omega)\big\|_{L^{2}(\mathbb{R}^{d},\, \omega)}^{2}
\end{equation*}
where $\mu$ is the minimizer, or an inequality related to Lyapunov-Schmidt reduction \cite[Chapter 3.12]{simon2012theorems}
\begin{equation*}
    \mathcal{F}(\omega) - \mathcal{F}(\omega_{\sharp}) \lesssim \big\|D_{W}\mathcal{F}(\omega)\big\|_{L^{2}(\mathbb{R}^{d},\, \omega)}^{2}
\end{equation*}
where $\omega_{\sharp}$ is the radial rearrangement of $\omega$ (see Section~\ref{sec:2-2-rearrange}). These would be critical for analyzing the long-time behavior of the aggregation-diffusion equation~\eqref{0-1}.

The proof of Theorem~\ref{theorem:1} is given in Section~\ref{sec:3-transport}, and the proof of Theorem~\ref{theorem:2_spectrum} is given in Section~\ref{sec:4-spec}.

\section*{Acknowledgment}
\textbf{AI usage}: 
\begin{enumerate}
    \item Lemma~\ref{lemma:7} comes from a discussion with ChatGPT 5.6 Sol, which is used to bridge the height interpolation to Wasserstein geodesics, and is the core technical difficulty of the work.
    \item All the numerical results and illustrations are conducted purely by ChatGPT 5.6 Sol.
\end{enumerate}
No AI is used in writing. The author takes full responsibility for the content.

\section{Preliminary}

\subsection{Optimal transport}\label{sec:2-1-optimal}
In this part, we collect some discussion on the optimal transport theory. Our references are \cite{ambrosio2008gradient, villani2008optimal, santambrogio2015optimal}. We define the second-momentum of a measure,
\begin{equation*}
    M_{2}(\omega) := \int_{\mathbb{R}^d} |x|^2 \ud \omega,
\end{equation*}
let $\mathscr{P}_{2}(\mathbb{R}^{d})$ be the space of probability measures with finite $M_2$, and $\mathscr{P}_{2, \mathrm{ac}}(\mathbb{R}^{d})$ be those measures that are absolutely continuous with respect to the Lebesgue measure. The set $\mathscr{P}_{2}(\mathbb{R}^{d})$ is a complete metric space with a natural metric $W_{2}$ from optimal transport \cite[Chapter 6]{villani2008optimal}.

\begin{definition}We define the Wasserstein distance $W_{2}$ between $\mu$ and $\omega$ in $\mathscr{P}_{2}(\mathbb{R}^{d})$ as
\begin{equation*}
    W_{2}(\mu, \omega) = \inf \bigg\{ \int_{\mathbb{R}^{d}} |S(x) - x|^2 \ud \mu : S\#\mu = \omega\bigg\}^{\frac{1}{2}}
\end{equation*}
where $S$ is minimized among measurable maps from $\mathbb{R}^{d}$ to $\mathbb{R}^{d}$, and $\#$ is the push-forward operator defined by $(S\# \mu)[A] = \mu[S^{-1}(A)]$ for all Borel measurable set, or can be defined weakly by
\begin{equation*}
    \int_{\mathbb{R}^d} \varphi \ud (S \# \mu) = \int_{\mathbb{R}^d} \varphi \circ S \ud \mu
\end{equation*}
for all bounded continuous function $\varphi$. The minimizer $S_{\ast}$ is called the optimal transport map.
\end{definition}

The following polar factorization theorem of Y. Brenier \cite{brenier1991polar} characterizes the most important geometry of Wasserstein distance.
\begin{lemma}[polar factorization]\label{lemma-3-brenier}Given any $\mu \in \mathscr{P}_{2, \mathrm{ac}}(\mathbb{R}^{d})$ and $\omega \in \mathscr{P}_{2}(\mathbb{R}^{d})$, there exists a unique convex function $\phi_{\mu \to \omega}$ called Brenier potential such that $\nabla \phi_{\mu \to \omega}$ is the optimal transport map from $\mu$ to $\omega$. On the other hand, if one can find a convex function $\phi$ with $\nabla \phi \# \mu = \omega$, we have $\phi = \phi_{\mu \to \omega}$.
\end{lemma}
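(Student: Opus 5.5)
The plan follows the classical route of Brenier via Kantorovich duality and cyclical monotonicity, as presented in \cite{villani2008optimal, santambrogio2015optimal}. Throughout, uniqueness of the potential $\phi_{\mu \to \omega}$ is understood up to an additive constant and $\mu$-almost everywhere on the support of $\mu$, since only $\nabla\phi$ on $\mathrm{supp}(\mu)$ is determined.

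\textbf{Step 1: an optimal plan exists.} Relax the problem to transport plans $\gamma \in \Pi(\mu,\omega)$ and minimize $\int |x-y|^2 \ud\gamma$. The set $\Pi(\mu,\omega)$ is tight, hence weakly compact by Prokhorov. The cost is continuous and bounded below, and it is integrable because $\mu, \omega \in \mathscr{P}_{2}(\mathbb{R}^{d})$. Lower semicontinuity then gives a minimizer $\gamma_\ast$.

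\textbf{Step 2: the support of $\gamma_\ast$ is the graph of a convex subdifferential.} Expanding $|x-y|^2 = |x|^2 + |y|^2 - 2x\cdot y$, minimizing the cost is the same as maximizing $\int x\cdot y \ud\gamma$. A standard perturbation argument, which swaps mass among finitely many points of the support, shows that $\mathrm{supp}(\gamma_\ast)$ is cyclically monotone for $x\cdot y$. By Rockafellar's theorem there is a proper lower semicontinuous convex function $\phi$ with $\mathrm{supp}(\gamma_\ast) \subseteq \partial\phi$.

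\textbf{Step 3: the plan is induced by a map.} This step uses $\mu \in \mathscr{P}_{2,\mathrm{ac}}(\mathbb{R}^{d})$. First, $\phi$ is finite on the interior of the convex hull of the projection of the support, and this set carries full $\mu$-measure, since its boundary is Lebesgue-null. Second, a convex function is differentiable Lebesgue-a.e. in the interior of its domain (Alexandrov/Rademacher). Hence for $\mu$-a.e. $x$ the fiber $\partial\phi(x)$ is the single point $\nabla\phi(x)$. It follows that $\gamma_\ast = (\mathrm{id}, \nabla\phi)\#\mu$, so $\nabla\phi\#\mu = \omega$ and $\nabla\phi$ is an optimal map. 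I expect this measure-theoretic step to be the main obstacle. The points to control are that $\mu$ does not charge the boundary of the domain of $\phi$, and that the set where $\phi$ is nondifferentiable is Lebesgue-null.

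\textbf{Step 4: any convex $\phi$ with $\nabla\phi\#\mu=\omega$ is optimal, and the potential is unique.} Let $\phi$ be such a convex function, with Legendre transform $\phi^\ast$. The Fenchel--Young inequality gives $\phi(x) + \phi^\ast(y) \ge x\cdot y$, with equality for $y = \nabla\phi(x)$. Integrating against an arbitrary plan $\gamma\in\Pi(\mu,\omega)$ yields
\begin{equation*}
\int x\cdot y \ud\gamma \le \int\phi \ud\mu + \int \phi^\ast \ud\omega = \int x\cdot\nabla\phi(x) \ud\mu .
\end{equation*}
Integrability of $\phi$ and $\phi^\ast$ follows from the second-moment bounds, possibly after truncation and a limit. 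So $(\mathrm{id},\nabla\phi)\#\mu$ is optimal, and the same argument shows that every optimal plan $\gamma$ is concentrated on $\{\phi(x)+\phi^\ast(y)=x\cdot y\} = \partial\phi$.

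Now suppose $\phi_1,\phi_2$ are two such convex functions. Applying this with $\phi = \phi_1$ to the optimal plan $\gamma_2 = (\mathrm{id},\nabla\phi_2)\#\mu$ gives $\nabla\phi_2(x)\in\partial\phi_1(x)$ for $\mu$-a.e. $x$. Since $\partial\phi_1(x)=\{\nabla\phi_1(x)\}$ $\mu$-a.e., we get $\nabla\phi_1=\nabla\phi_2$ $\mu$-a.e. This identifies $\phi_1$ and $\phi_2$ up to an additive constant on the support of $\mu$. It also gives the uniqueness of the optimal plan, and hence of $\phi_{\mu\to\omega}$ in the stated sense.
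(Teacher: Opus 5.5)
The paper gives no proof of this lemma; it only quotes Brenier. Your argument is the standard textbook proof and it is correct in substance: existence of an optimal plan, cyclical monotonicity of $\mathrm{supp}(\gamma_\ast)$, Rockafellar's theorem, a.e.\ differentiability of convex functions in the interior of their domain (using that $\mu$ does not charge the boundary of a convex set), and Fenchel--Young for the converse. Brenier's own proof goes through the dual problem instead. He minimizes $\int\phi\ud\mu+\int\psi\ud\omega$ subject to $\phi(x)+\psi(y)\ge x\cdot y$, restricts to conjugate pairs $(\phi,\phi^\ast)$, gets a minimizer by compactness, and reads off $\nabla\phi\#\mu=\omega$ from the first variation in $\phi$. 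Your primal route avoids proving attainment in the dual problem and gives uniqueness of the optimal plan as a by-product; Brenier's route produces the potentials directly. Both give the converse direction, which is what the paper actually uses: a convex gradient pushing $\mu$ to $\omega$ is the optimal map. This is how it identifies the scaling map in Lemma~\ref{lemma:7} and the translated potential in Lemma~\ref{lemma_18}.

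There are two small corrections.

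First, the integrability in Step 4 needs no truncation. Fix $x_0\in\mathrm{int}\,\mathrm{dom}\,\phi$ and $p_0\in\partial\phi(x_0)$. At every point of differentiability, convexity gives
\begin{equation*}
\phi(x_0)+p_0\cdot(x-x_0)\le\phi(x)\le\phi(x_0)+|\nabla\phi(x)|\,(|x|+|x_0|).
\end{equation*}
So $\phi\in L^1(\mu)$, because $\int|\nabla\phi|^2\ud\mu=M_2(\omega)<\infty$. For $\phi^\ast$, combine the lower bound $\phi^\ast(y)\ge x_0\cdot y-\phi(x_0)$ with $\phi^\ast(\nabla\phi(x))=x\cdot\nabla\phi(x)-\phi(x)$; this gives $\phi^\ast\in L^1(\omega)$.

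Second, your last sentence overclaims. $\nabla\phi_1=\nabla\phi_2$ $\mu$-a.e.\ does not make $\phi_1-\phi_2$ constant on $\mathrm{supp}(\mu)$ when the support is disconnected. Take $d=1$ and $\mu=\omega$ uniform on $[0,1]\cup[2,3]$. Let $\phi_1(x)=x^2/2$, and let $\phi_2$ be convex with $\phi_2'(x)=x$ for $x\notin(1,2)$ and $\phi_2'=1$ on $(1,2)$. Both push $\mu$ to itself, yet $\phi_1-\phi_2$ equals $0$ on $[0,1]$ and $1/2$ on $[2,3]$. What is unique is $\nabla\phi$ $\mu$-a.e.; hence $\phi$ is unique only up to a constant on each connected open set where $\mu$ has positive density. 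This is also the only sense in which the statement's ``unique convex function'' can hold.
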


This polar factorization theorem captures the local geometric structure of $\mathscr{P}_{2}(\mathbb{R}^{d})$. The following construction is originally due to Otto \cite{otto2001geometry}, and the rigorous version of the tangent space is given in \cite{ambrosio2008gradient}.
\begin{definition}\label{def_4_tangent}We define the tangent space $T_{\mu} \mathscr{P}_{2}(\mathbb{R}^{d})$ as the closure of the space of smooth and compactly supported gradient vector fields under the $L^{2}(\mathbb{R}^d,\mu)$ norm:
\begin{equation*}
    T_{\mu} \mathscr{P}_{2}(\mathbb{R}^{d}) = \overline{\big\{\nabla \phi : \phi \in C_{c}^{\infty}(\mathbb{R}^d)\big\}}_{L^{2}(\mathbb{R}^d,\,\mu)}
\end{equation*}
We define the Otto metric $g_{\mu}$ as a bilinear form on the tangent space $T_{\mu} \mathscr{P}_{2}(\mathbb{R}^d)$ given by
\begin{equation*}
    g_{\mu}(X, Z) := \int_{\mathbb{R}^d} X \cdot Z \ud \mu.
\end{equation*}
For notational simplicity, we denote the Otto metric as $(X, Z)_{\mu}$.
\end{definition}

\subsection{Rearrangement}\label{sec:2-2-rearrange}
In this part, we collect some discussion on rearrangement of mass densities. Our references are \cite{carrillo2016nonlinear, delgadino2022uniqueness, yan2022sharp}.
\begin{definition}[radial symmetric]
For given non-negative measurable function $f$, we say it is \textit{radial symmetric}, if there exists a function $f_{\mathrm{ra}} : [0, \infty) \to \mathbb{R}$ such that $f(x) = f_{\mathrm{ra}}(|x|)$. Furthermore, we say it is \textit{radial decreasing}, if in addition $f_{\mathrm{ra}}$ is right-continuous and decreasing.
\end{definition}

We next recall the Schwarz spherical decreasing rearrangement, which provides a canonical ways of constructing radial decreasing functions for given density distribution $f$.
\begin{definition}[rearrangement]We say $f_{\sharp}$ is the Schwarz spherical decreasing rearrangement of $f$, if 
\begin{equation*}
    f_{\sharp}(x) = f_{\mathrm{Har}}(\omega_{d}|x|^d)
\end{equation*}
where $\omega_{d}$ is the volume of $d$-dimensional unit ball, and $f_{\mathrm{Har}}$ is the Hardy-Littlewood one-dimensional decreasing rearrangement of $f$ defined as the right inverse of the accumulation function
\begin{equation*}
    f_{\mathrm{Har}}(s) = F^{-1}(s),\quad \text{ where } F(t) = \big|\{x : f(x) \ge t\}\big|,
\end{equation*}
where $F(t)$ is the Lebesgue measure of the $t$-upper level set of $f$.
\end{definition}

One property of this Schwarz spherical decreasing rearrangement (or we call the radial symmetric rearrangement) is the preservation of Lebesgue norm given as follows.
\begin{lemma}For any non-negative $f$ in $L^{m}(\mathbb{R}^{d})$, we have
\begin{equation*}
    \|f_{\sharp}\|_{L^{m}(\mathbb{R}^{d})} = \|f\|_{L^{m}(\mathbb{R}^{d})},\quad \text{ for all } 1\le m \le \infty,
\end{equation*}
where $f_{\sharp}$ is the Schwarz spherical decreasing rearrangement.
\end{lemma}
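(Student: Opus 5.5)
The statement is the equimeasurability of the Schwarz rearrangement: $f$ and $f_{\sharp}$ have the same distribution function. The plan is to establish this and then convert $L^m$ norms into integrals of the distribution function via the layer-cake formula.

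\textbf{Step 1: equality of distribution functions.} For every $t>0$ I will show
\begin{equation*}
    \big|\{x : f_{\sharp}(x) \ge t\}\big| = \big|\{x : f(x) \ge t\}\big| = F(t).
\end{equation*}
By definition, $f_{\sharp}(x) = f_{\mathrm{Har}}(\omega_d|x|^d)$, and $f_{\mathrm{Har}}$ is the right-continuous decreasing generalized inverse of the decreasing function $F$. So $\{s\ge 0 : f_{\mathrm{Har}}(s)\ge t\}$ is an interval from $0$ with length $F(t)$. The only care needed is at endpoints and plateaus of $F$, where the interval may be open or closed at its right end; this does not change its length.

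Since the map $x\mapsto \omega_d|x|^d$ pushes Lebesgue measure on $\mathbb{R}^d$ forward to Lebesgue measure on $[0,\infty)$, the superlevel set $\{f_\sharp\ge t\}$ is a ball centered at the origin of volume $F(t)$, possibly up to its boundary sphere, which is a null set. This gives the claimed equality. I expect this step to be the main (though mild) obstacle, precisely because of the bookkeeping with the generalized inverse and right-continuity.

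\textbf{Step 2: layer-cake for $1\le m<\infty$.} For nonnegative $g$, Tonelli's theorem gives
\begin{equation*}
    \int_{\mathbb{R}^d} g^m \ud x = \int_0^\infty m t^{m-1} \big|\{g \ge t\}\big| \ud t .
\end{equation*}
(The version with $\{g>t\}$ gives the same integral, since the two sets differ in measure for at most countably many $t$.) Applying this to $g=f$ and $g=f_\sharp$ and using Step 1, the two right-hand sides coincide. Hence $\|f_\sharp\|_{L^m}=\|f\|_{L^m}$.

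\textbf{Step 3: the case $m=\infty$.} The essential supremum of a nonnegative function is
\begin{equation*}
    \|g\|_{L^\infty} = \inf\big\{t : |\{g\ge t\}| = 0\big\},
\end{equation*}
which depends only on the distribution function. By Step 1, $f$ and $f_\sharp$ therefore have the same $L^\infty$ norm.

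The argument does not use the assumption $f\in L^m$ beyond guaranteeing that both sides are finite; the equality holds in $[0,\infty]$ in general.
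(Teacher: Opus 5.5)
The paper gives no proof of this lemma; it is quoted as a standard property of the Schwarz rearrangement. Your route (equimeasurability, then the layer-cake formula, then the distributional formula for the essential supremum) is the standard proof, and it is correct.

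The one step worth making precise is the ``bookkeeping'' in Step~1. For the right-continuous inverse, $\{s\ge 0 : f_{\mathrm{Har}}(s)\ge t\}$ is an interval from $0$ of length $\lim_{t'\uparrow t}F(t')$, not $F(t)$ a priori. So what you actually need is left-continuity of $F$; the remark about endpoints and plateaus does not supply it. Left-continuity holds because $\{f\ge t\}=\bigcap_{t'<t}\{f\ge t'\}$ and Lebesgue measure is continuous from above on sets of finite measure. For $m<\infty$ the finiteness comes from Chebyshev, $F(t')\le t'^{-m}\|f\|_{L^m(\mathbb{R}^d)}^m$.

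Consequently your closing remark is slightly off: finiteness of $F$ is used. When it fails, which can happen for $m=\infty$, Step~1 breaks at one level. For example, $f(x)=|x|/(1+|x|)$ has $F(1)=0$, while the defining formula gives $f_\sharp\equiv 1$. The norm identity still holds, because the two distribution functions can then differ only at the single level $\inf\{t: F(t)<\infty\}$. Neither the layer-cake integral nor your formula for $\|\cdot\|_{L^\infty}$ detects a single level. In the paper the lemma is only applied to probability densities, where $F(t)\le 1/t$, so this edge case does not arise there.
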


We recall the following sharp second-momentum stability of Yan--Yao \cite[Proposition 4.1]{yan2022sharp}.
\begin{lemma}\label{lemma-5-redution}
For any $\omega\in\mathscr P_2(\mathbb{R}^d)$, let $\omega_{\sharp}$ be its radial symmetric rearrangement, we have
\begin{equation*}
    M_2(\omega) - M_2(\omega_{\sharp}) \ge W_2(\omega, \omega_{\sharp})^2,
\end{equation*}
where $M_{2}(\omega) := \int_{\mathbb{R}^d} |x|^2 \ud \omega$.
\end{lemma}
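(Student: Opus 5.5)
The plan is to reduce to the case of sets by the layer-cake representation, and then handle sets with a Brenier map plus the mean value property for subharmonic functions. First I rewrite the inequality. For any coupling $\gamma$ of $\omega_{\sharp}$ (variable $y$) and $\omega$ (variable $x$),
\begin{equation*}
    W_2(\omega,\omega_{\sharp})^2 \le \int |x-y|^2 \ud\gamma = M_2(\omega) + M_2(\omega_\sharp) - 2\int x\cdot y \ud \gamma .
\end{equation*}
So it suffices to exhibit one coupling $\gamma$ with $\int x\cdot y \ud\gamma \ge M_2(\omega_\sharp)$. We may assume $\omega$ has a density, also denoted $\omega$; the general case follows by approximation and lower semicontinuity. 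Write $\omega = \int_0^\infty \mathbf 1_{E_t}\ud t$ with $E_t=\{\omega>t\}$. Then $\omega_\sharp=\int_0^\infty \mathbf 1_{B_t}\ud t$, where $B_t$ is the centered ball with $|B_t|=|E_t|$. For each $t$ with $0<|E_t|<\infty$, let $\gamma_t$ be a coupling of $\mathbf 1_{B_t}\ud y$ and $\mathbf 1_{E_t}\ud x$, chosen measurably in $t$; for instance the optimal one, via a measurable selection. The glued measure $\gamma=\int_0^\infty\gamma_t\ud t$ is then a coupling of $\omega_\sharp$ and $\omega$. The claim therefore reduces to the set inequality
\begin{equation*}
    \int x\cdot y\ud\gamma_t \ge \int_{B_t}|y|^2\ud y \quad\text{for each } t,
\end{equation*}
which, integrated in $t$, gives exactly $M_2(\omega_\sharp)$.

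For the set inequality, fix a ball $B=B_r(0)$ and a set $E$ with $|E|=|B|$. By Lemma~\ref{lemma-3-brenier}, take the Brenier potential $\phi$ with $\nabla\phi\#\mathbf 1_B=\mathbf 1_E$, and use the coupling $(\mathrm{id},\nabla\phi)\#\mathbf 1_B$. We need
\begin{equation*}
    \int_B y\cdot\nabla u\ud y\ge 0, \qquad u:=\phi-\tfrac{|y|^2}{2}.
\end{equation*}
Integrating by parts on the ball, and using $y\cdot\nu=r$ on $\partial B$ together with $\mathrm{div}\,y=d$, gives
\begin{equation*}
    \int_B y\cdot\nabla u = r\int_{\partial B}u\ud S - d\int_B u\ud y .
\end{equation*}
Since $r|\partial B| = d|B|$, the right-hand side equals $d|B|$ times the sphere average of $u$ minus its ball average. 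This is nonnegative whenever $u$ is subharmonic in $B$, by the mean value inequality: sphere averages of a subharmonic function increase in the radius, so they dominate the ball average.

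Subharmonicity comes from the Monge--Amp\`ere structure. By McCann's theorem, the Alexandrov Hessian of $\phi$ satisfies $\det D^2_{\mathrm{ac}}\phi=\mathbf 1_B/\mathbf 1_E(\nabla\phi)=1$ a.e.\ on $B$. The AM--GM inequality applied to the nonnegative eigenvalues then gives $\Delta_{\mathrm{ac}}\phi\ge d$. The singular part of the distributional Hessian of the convex function $\phi$ is a nonnegative matrix measure. Hence the distributional Laplacian satisfies $\Delta\phi\ge d$, that is, $\Delta u\ge 0$ in $\mathcal D'(B)$.

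I expect the main obstacle to be making this last step and the boundary integration by parts rigorous, since $\phi$ is only convex and $E$ is arbitrary. First I would mollify $u$ inside slightly smaller balls $B_{r-\varepsilon}$. There the mean value inequality holds for the smooth subharmonic mollification $u_\delta$, and the identity above holds classically. I would then pass $\delta\to0$ and $\varepsilon\to0$ using local Lipschitz continuity of $\phi$ and the integrability of $y\cdot\nabla\phi$ on $B$, which holds because $\nabla\phi\in L^2(B)$ by finite second moments. The remaining points are routine: the measurable choice of $\gamma_t$, the truncation to $|E_t|<\infty$, and the approximation for non-absolutely-continuous $\omega$.
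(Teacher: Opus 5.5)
Your proof is correct. The paper does not prove this lemma; it imports it verbatim from Yan--Yao~\cite[Proposition 4.1]{yan2022sharp}. So your argument is a self-contained derivation rather than a variant of one in the text.

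The first half uses the same slicing device as the paper's proof of Lemma~\ref{lemma:7}. You decompose into level sets, couple slice by slice and glue, using that $M_2$ is additive and $W_2^2$ subadditive under gluing. This reduces everything to a ball $B$ against a set $E$ of equal volume.

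The genuinely new input is that, for equal volumes, the Brenier potential has unit Jacobian. AM--GM then gives $\Delta\phi\ge d$ as measures on $B$. This fact is not available for the optimal map from $\omega_\sharp$ to $\omega$ itself, since its Jacobian $\omega_\sharp/(\omega\circ\nabla\phi)$ carries no sign information relative to $1$. That is why working level set by level set is the natural route.

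You can shorten the last step and drop the mollification, the boundary values of $u$, and the spherical-mean monotonicity. Test $\Delta\phi\ge d$ against the Lipschitz, compactly supported function $\zeta_\rho=\tfrac12(\rho^2-|y|^2)_+$ with $\rho<r$:
\begin{equation*}
\int_{B_\rho}y\cdot\nabla\phi\ud y=\int\zeta_\rho\ud(\Delta\phi)\ge d\int_{B_\rho}\tfrac12(\rho^2-|y|^2)\ud y=\int_{B_\rho}|y|^2\ud y.
\end{equation*}
Then let $\rho\uparrow r$ by dominated convergence, which is justified because $\nabla\phi\in L^2(B)$.

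Two minor points. First, the paper defines $\omega_\sharp$ only for densities, so your approximation remark for non-absolutely-continuous $\omega$ is beside the point; it would first need a definition of the rearrangement. Second, measurability of $t\mapsto\gamma_t$ is easiest via uniqueness and stability of the Brenier plan. This works because $t\mapsto\mathbf 1_{E_t}$ is $L^1$-continuous except at the countably many $t$ with $|\{\omega=t\}|>0$.
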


\subsection{Height interpolation}
We recall the height interpolation technique between radial decreasing densities introduced in Delgadino--Yan--Yao \cite{delgadino2022uniqueness}. We first define the height function and radius function, illustrated in Figure~\ref{fig:1}.
\begin{definition}[height function]Given bounded radial decreasing density $\varrho : \mathbb{R}^{d} \to [0, \infty)$, we let
\begin{equation*}
    H[\varrho] : [0, 1] \to \big[0, \|\varrho\|_{L^{\infty}}\big]
\end{equation*}
be its height function, defined such that
\begin{equation*}
    \int_{\mathbb{R}^{d}} \min\big\{\varrho(x), H[\varrho](s)\big\} \ud x = s.
\end{equation*}
In other words, $H[\varrho](s)$ is the height of the plane such that the mass below the plane equals to $s$.
\end{definition}

\begin{definition}\label{def:6:radius}
We define the radius function
\begin{equation*}
    R[\varrho](s) = \big(\omega_{d} H[\varrho]^{\prime}(s)\big)^{-\frac{1}{d}},
\end{equation*}
which is the radius of the level set of the profile $\varrho$ at height $H[\varrho](s)$.
\end{definition}

The following lemma due to \cite[Lemma 2.1]{delgadino2022uniqueness} establishes the correspondence between radial decreasing densities and hight function.
\begin{lemma}Given $\varrho$ compactly supported, continuous, and strictly decreasing within support interior, its height function $H[\varrho]$ is $C^1$, increasing and strictly convex.

Given $C^1$, increasing and strictly convex function $H[\varrho]$, one can recover the $\varrho$ as follows
\begin{equation*}
    \varrho(x) = \int_{0}^{1} \mathbf{1}\big[B_{R[\varrho](s)}\big](x) H[\varrho]^{\prime}(s) \ud s,
\end{equation*}
where $\mathbf{1}[A]$ is the characteristic function of the set $A$.
\end{lemma}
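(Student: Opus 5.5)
The statement to prove is the last lemma: the correspondence between radial decreasing profiles and height functions. I will handle the two directions separately, working throughout with the level-set description of a radial decreasing profile.

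\textbf{Forward direction.} Let $\varrho$ be compactly supported, continuous, and strictly decreasing in its support interior. For $0\le h\le \|\varrho\|_{L^\infty}$ set
\begin{equation*}
    m(h) := \int_{\mathbb{R}^d}\min\{\varrho(x),h\}\ud x = \int_0^h \big|\{\varrho>t\}\big|\ud t .
\end{equation*}
This is the layer-cake formula. Write $V(t):=|\{\varrho>t\}| = \omega_d r(t)^d$, where $r(t)$ is the radius of the superlevel set.
\begin{enumerate}
\item Strict radial monotonicity and continuity make $r(t)$ continuous and strictly decreasing on $[0,\|\varrho\|_{L^\infty})$. Also $r(t)>0$ for $t<\max\varrho$, and $r$ vanishes only at the top.
\item Hence $m$ is $C^1$ with $m'(h)=V(h)>0$ for $h<\max\varrho$, and $m$ is strictly concave because $V$ is strictly decreasing. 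At the top, $m(\max\varrho)=1$.
\item $H[\varrho]=m^{-1}$ is then well defined, increasing, and $C^1$ on $[0,1)$, with
\begin{equation*}
    H'(s)=\frac{1}{V(H(s))}.
\end{equation*}
Since $V\circ H$ is strictly decreasing, $H'$ is strictly increasing, so $H$ is strictly convex.
\item The only delicate point is $s=1$, where $V\to 0$ and $H'\to\infty$. There I would interpret $C^1$ on $[0,1)$, or in the extended sense, noting that $H'(s)\uparrow\infty$.
\end{enumerate}
This also justifies Definition~\ref{def:6:radius}: $\omega_d R(s)^d = 1/H'(s) = V(H(s))$, so $R(s)$ is exactly the radius of the level set at height $H(s)$.

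\textbf{Reconstruction.} Given $H$ that is $C^1$, increasing and strictly convex, let $R(s)=(\omega_d H'(s))^{-1/d}$. This $R$ is decreasing in $s$. Define
\begin{equation*}
    \varrho(x):=\int_0^1\mathbf{1}\big[B_{R(s)}\big](x)\,H'(s)\ud s .
\end{equation*}
I will verify three things.
\begin{enumerate}
\item \emph{Form of $\varrho$.} Because $R$ is decreasing, $x\in B_{R(s)}$ exactly when $s<s(x):=\sup\{s: R(s)>|x|\}$. Therefore $\varrho(x)=H(s(x))-H(0)$, which is radial decreasing.
\item \emph{Height normalization.} Take $H(0)=0$, as forced by the definition of the height function, since $\int\min\{\varrho,0\}=0$. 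Then the level set $\{\varrho>H(\sigma)\}$ equals $B_{R(\sigma)}$, and $|\{\varrho>H(\sigma)\}|=1/H'(\sigma)$.
\item \emph{Identity $H[\varrho]=H$.} With the change of variables $t=H(\sigma)$,
\begin{equation*}
    \int_0^{H(s)}\big|\{\varrho>t\}\big|\ud t=\int_0^s \frac{1}{H'(\sigma)}\,H'(\sigma)\ud\sigma = s .
\end{equation*}
So the mass below height $H(s)$ is $s$, which means $H[\varrho]=H$. In particular the correspondence is a bijection.
\end{enumerate}

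\textbf{Main obstacle.} The hardest step is the regularity bookkeeping at the endpoints. At $s=1$ (the peak of $\varrho$), $H'$ blows up. At $s=0$, $R(0)$ must be the support radius, so $H'(0)=1/|\mathrm{supp}\,\varrho|$ is finite and positive, and $\varrho$ is continuous at the free boundary. I would handle both endpoints by proving every identity on $[\epsilon,1-\epsilon]$ and passing to the limit by monotone convergence.
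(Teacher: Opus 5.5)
Your argument is correct. There is no proof in the paper to compare it with step by step: the lemma is stated without proof and attributed to \cite[Lemma 2.1]{delgadino2022uniqueness}. Your route gives a natural self-contained derivation of what the paper takes on citation. It uses the layer-cake identity $m(h)=\int_0^h|\{\varrho>t\}|\,dt$, the inversion $H[\varrho]=m^{-1}$ with $H'(s)=1/V(H(s))$, and the substitution $t=H(\sigma)$ for the converse. It also makes explicit the correspondence the citation hides: continuity of $\varrho$ corresponds to strict monotonicity of $V$, hence strict convexity of $H$, while strict decrease of $\varrho$ corresponds to continuity of $V$, hence $H\in C^1$. And it exposes a genuine imprecision in the statement. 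For a profile with no plateau at its peak (such as the minimizer $\mu$), $V(H(s))\to 0$ and $H'(s)\to\infty$ as $s\to 1$. So ``$C^1$'' can only mean $C^1([0,1))$, exactly as you say.

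Two points to tighten. First, the lemma claims the formula returns the \emph{given} $\varrho$. Your reconstruction shows only that the reconstructed $\tilde\varrho$ satisfies $H[\tilde\varrho]=H$, which closes the loop only after an injectivity remark. The direct route is already in your forward step: since $B_{R(s)}=\{\varrho>H(s)\}$, the substitution $t=H(s)$ gives $\int_0^1\mathbf{1}[\varrho(x)>H(s)]\,H'(s)\,ds=\int_0^{\max\varrho}\mathbf{1}[\varrho(x)>t]\,dt=\varrho(x)$. Second, your ``bijection'' remark holds only onto the subclass with $H'(0)>0$ and $H'(s)\uparrow\infty$. A strictly convex $H\in C^1([0,1])$ with $H'(1)<\infty$ reconstructs a density that is constant on $B_{R(1)}$. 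If $H'(0)=0$, then $R(0)=\infty$ and the support is non-compact. Neither case lies in the class of the forward direction, although your identity $H[\tilde\varrho]=H$ still holds for them.
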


Height interpolation is a continous curve $\varrho(t)$ in $\mathscr{P}_{2}(\mathbb{R}^d)$ between two given endpoints such that the corresponding height function $[\varrho(t)]$ is affine is the sense of Lebesgue functions.
\begin{definition}[height interpolation]Given bounded radial decreasing densities $\varrho_{0}$ and $\varrho_{1}$, we define the height interpolation $\varrho(t)$ from $\varrho_{0}$ to $\varrho_{1}$ such that
\begin{equation*}
    H[\varrho(t)] = (1 - t)H[\varrho_{0}] + t H[\varrho_{1}].
\end{equation*}
\end{definition}

The following lemma introduced in \cite[Proposition 2.2]{delgadino2022uniqueness} gives the regularity of height interpolation in the Wasserstein space.
\begin{lemma}\label{lemma_8_height}Given compactly supported densities $\mu, \omega$ in $\mathscr{P}_2(\mathbb{R}^d) \cap C(\mathbb{R}^d)$, that are strictly decreasing within the support. Let $\varrho(t)$ be the height interpolation from $\mu$ to $\omega$.

Then $\varrho(t)$ is a Lipschitz curve in the Wasserstein space $(\mathscr{P}_2(\mathbb{R}^d), W_2)$. In addition, there exists a bounded radial vector field $Z(x, t)$ such that
\begin{equation*}
    \frac{\partial \varrho(t)}{\partial t} + \mathrm{div}(\varrho(t)Z(x, t)) = 0
\end{equation*}
in the sense of distribution.
\end{lemma}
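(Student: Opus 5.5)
The plan is to exploit radial symmetry: write the continuity equation as a one-dimensional mass-balance identity, produce the velocity field $Z$ explicitly from the flux through spheres, and then obtain the Lipschitz property from the Benamou--Brenier characterization of absolutely continuous curves in $(\mathscr{P}_2(\mathbb{R}^d),W_2)$ \cite[Theorem 8.3.1]{ambrosio2008gradient}, since $\|Z(\cdot,t)\|_{L^2(\varrho(t))}\le\|Z\|_{L^\infty}$ bounds the metric derivative.

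\textbf{Step 1 (mass function).} Write $H_t=H[\varrho(t)]=(1-t)H[\mu]+tH[\omega]$. This is again $C^1$, increasing and strictly convex, so by the recovery formula
\begin{equation*}
\varrho(t)(x)=\int_0^1\mathbf{1}\big[B_{R_t(s)}\big](x)\,H_t'(s)\ud s,\qquad \omega_dR_t(s)^d=\frac{1}{H_t'(s)}.
\end{equation*}
Integrating over $B_r$ gives the mass function
\begin{equation*}
m(r,t):=\varrho(t)[B_r]=\int_0^1\min\big\{\omega_dr^dH_t'(s),\,1\big\}\ud s .
\end{equation*}
Since $H_t'$ is increasing, the set where the minimum is the first entry is $\{s<s_t(r)\}$, where $s_t(r)$ is defined by $H_t'(s_t(r))=(\omega_dr^d)^{-1}$. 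This gives:
\begin{enumerate}
\item $\partial_tm(r,t)=\omega_dr^d\int_0^{s_t(r)}\big(H[\omega]'-H[\mu]'\big)\ud s$;
\item $\varrho(t)$ at radius $r$ equals $H_t(s_t(r))$.
\end{enumerate}

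\textbf{Step 2 (defining $Z$).} For a radial field $Z=z(r,t)\,x/|x|$, the continuity equation is equivalent to the flux identity
\begin{equation*}
\varrho(t)(r)\,z(r,t)\,d\omega_dr^{d-1}=-\partial_tm(r,t).
\end{equation*}
So I will set $z=-\partial_tm/(d\omega_dr^{d-1}\varrho(t)(r))$ inside the support and $z=0$ outside. The distributional identity then follows by testing against radial test functions and integrating by parts in $r$; the general case reduces to this by averaging over spheres.

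\textbf{Step 3 (boundedness of $Z$).} This is the main obstacle. Combining (1) and (2) of Step 1,
\begin{equation*}
|z(r,t)|\le \frac{r}{d}\,\frac{H[\mu](s_t)+H[\omega](s_t)}{H_t(s_t)},\qquad s_t=s_t(r).
\end{equation*}
Since $r\le R$, it suffices to bound the ratio uniformly in $t\in[0,1]$. I would split according to the size of $s_t$.
\begin{itemize}
\item \emph{Small $s$ (near the free boundary, where $\varrho(t)\to0$).} Compact support gives $H[\mu]'(0),H[\omega]'(0)>0$, namely the inverse volumes of the supports. Hence $H[\mu](s)$, $H[\omega](s)$ and $H_t(s)$ are all comparable to $s$ for $s\le s_*$, with constants independent of $t$.
\item \emph{$s\ge s_*$.} Here $H_t(s)\ge\min\{H[\mu](s_*),H[\omega](s_*)\}>0$, while the numerator is at most $\|\mu\|_\infty+\|\omega\|_\infty$.
\end{itemize}
Near $r=0$ the prefactor $r$ only helps. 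This yields $\|Z\|_{L^\infty}<\infty$ uniformly in $t$.

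\textbf{Conclusion.} The metric derivative of $\varrho(t)$ is bounded by $\|Z\|_{L^\infty}$ uniformly in $t$, which gives the Lipschitz bound in $W_2$. Step 3 is the delicate point; its key input is the positivity of $H'(0)$, which comes from compact support.
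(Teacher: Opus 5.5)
Your proof is correct. The paper gives no argument for this lemma; it quotes it from \cite[Proposition 2.2]{delgadino2022uniqueness}. So your proof is a self-contained replacement rather than a variant of something in the text.

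Your route has three steps: the mass function $m(r,t)=\varrho(t)[B_r]$, the flux identity through spheres, and then \cite[Theorem 8.3.1]{ambrosio2008gradient}. Its main advantage is that it produces the velocity explicitly. Since $H(0)=0$, your formula (1) simplifies to $\partial_t m=\omega_d r^d\,\big(H[\omega](s_t)-H[\mu](s_t)\big)$, so
\[
z(r,t)=-\frac{r}{d}\,\frac{H[\omega](s_t)-H[\mu](s_t)}{H_t(s_t)},\qquad r=R_t(s_t).
\]
At $t=0$ this is exactly the field $Z(x)=-\frac1d\frac{Z_H(s)}{H[\mu](s)}x$ with $Z_H=H[\omega]-H[\mu]$ that the paper uses later in Lemma~\ref{lemma_10_corres}. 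Your computation therefore also justifies that formula, and it gives an explicit Lipschitz constant; the citation only buys brevity.

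Three small tightenings:
\begin{enumerate}
\item Step 3 needs no case split. Convexity and $H(0)=0$ give $H'(0)\,s\le H(s)\le \|\varrho\|_{L^\infty}s$ for every $s\in[0,1]$. Hence $\|Z\|_{L^\infty}\le \frac{R}{d}\,\big(\|\mu\|_{L^\infty}+\|\omega\|_{L^\infty}\big)/\min\{H[\mu]'(0),H[\omega]'(0)\}$ directly. Here $R$ bounds all supports, which are uniformly bounded because $H_t'(0)\ge \min\{H[\mu]'(0),H[\omega]'(0)\}$.
\item Differentiating $m$ in $t$ under the integral is justified: $\min\{\cdot,1\}$ is $1$-Lipschitz and $H'$ is integrable on $[0,1]$.
\item To invoke \cite[Theorem 8.3.1]{ambrosio2008gradient} you should state narrow continuity of $t\mapsto\varrho(t)$. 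This follows from the continuity of $m(r,\cdot)$ together with radial symmetry.
\end{enumerate}
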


\begin{figure}
    \centering
    \includegraphics[width=\linewidth]{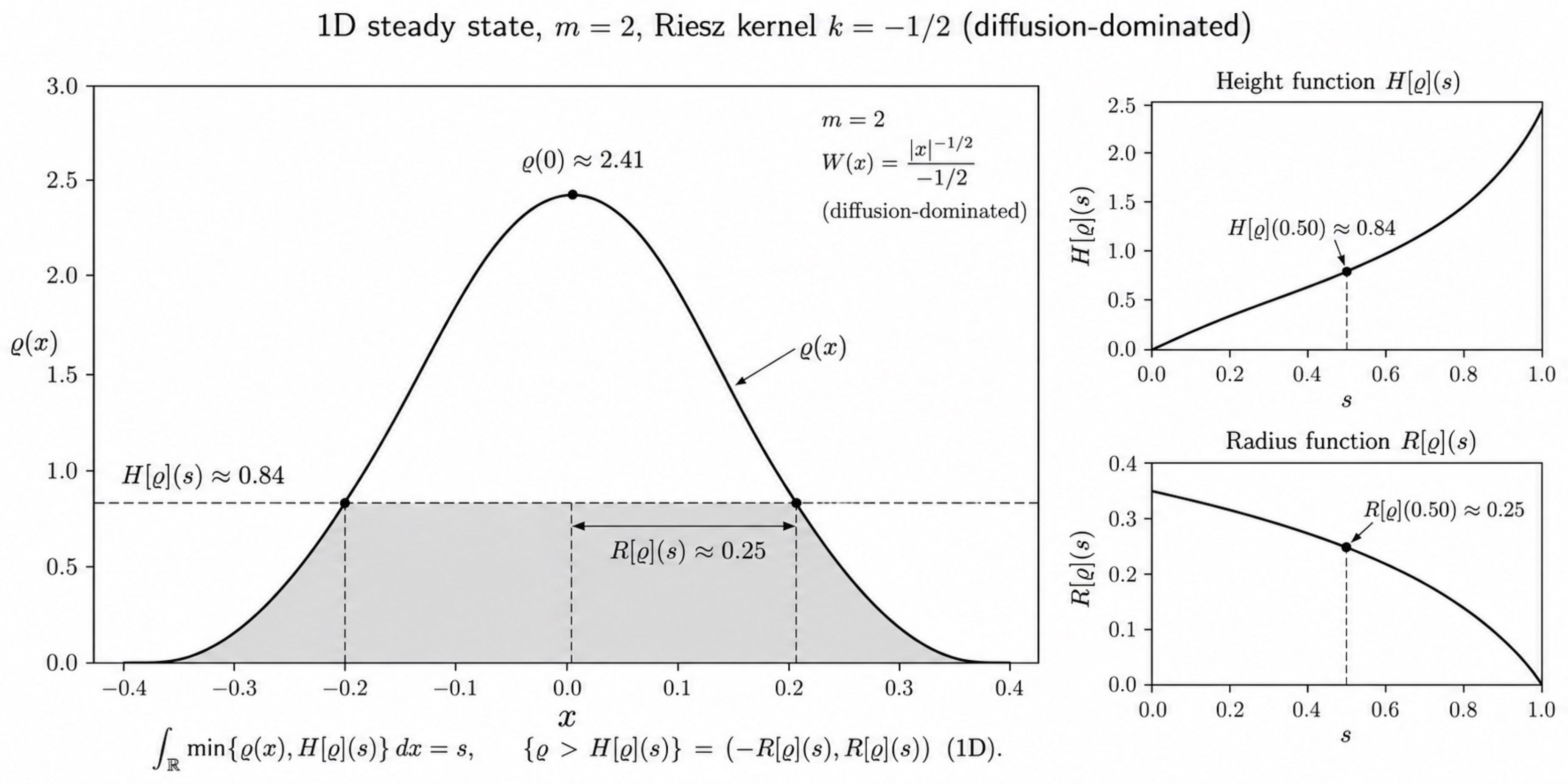}
    \caption{\small{Illustration of the height function and radius function.}}
    \label{fig:1}
\end{figure}

\section{Transport inequality for radial profiles}\label{sec:3-transport}
In this section, we construct transport inequality shown in Theorem~\ref{theorem:1}.

\subsection{From height to transport}
In this part, we introduce a technical lemma that bridges height interpolation to the Wasserstein geodesics interpolation, which is one of the technical novelties we employed. We let
\begin{equation*}
    \mathcal{I}_A(\varrho) := \frac12 \int_{\mathbb{R}^d} |x|^2 \varrho(x) \ud x.
\end{equation*}
By a slight abuse of notation, we denote
\begin{equation*}
    \mathcal{I}_{A}(H[\varrho]) = \mathcal{I}_{A}(\varrho),
\end{equation*}
where the left hand side refers to $\mathcal{I}_{A}$ as a functional over height functions. This energy functional $\mathcal{I}_{A}$ will serve as a comparison energy for the aggregation part free energy functional~\eqref{0}, which will be used in the next subsection for providing a lower bound on the second variation of $\mathcal{F}(\varrho)$.

Define the Bregman divergence
\begin{equation*}
    D_{\mathcal{I}, A}(H, H_{0}) := \mathcal{I}_A(H) - \mathcal{I}_A(H_{0}) - D\mathcal{I}_A(H_{0})(H - H_{0}),
\end{equation*}
we introduce the following lemma.
\begin{lemma}\label{lemma:7}
Let \(\omega,\mu\) be radial decreasing densities, we have
\begin{equation*}
    D_{\mathcal{I}, A}(H[\omega], H[\mu]) \ge \frac12 W_2(\omega, \mu)^{2}.
\end{equation*}
\end{lemma}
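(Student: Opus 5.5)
The plan is to write $\mathcal{I}_A$ explicitly as a functional of $H'$, observe that it is an integral of a convex function of $H'(s)$, and compare its Bregman divergence layer by layer with the cost of an explicit (not necessarily optimal) transport plan. That plan moves each horizontal layer of $\omega$ onto the corresponding layer of $\mu$ by a dilation.

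\emph{Step 1: layer-cake representation.} For a radial decreasing $\varrho$ with height function $H=H[\varrho]$, differentiating $\int\min\{\varrho,H(s)\}\ud x=s$ in $s$ gives $|\{\varrho>H(s)\}|\,H'(s)=1$. Hence $\{\varrho>H(s)\}=B_{R(s)}$ with $R(s)=R[\varrho](s)$ as in Definition~\ref{def:6:radius}, and the recovery formula
\begin{equation*}
    \varrho=\int_0^1 \mathbf{1}[B_{R(s)}]\,H'(s)\ud s
\end{equation*}
exhibits $\varrho$ as the superposition, with weight $\ud s$, of the uniform probability measures $u_{R(s)}$ on the balls $B_{R(s)}$. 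Since $\int |x|^2\ud u_R=c_dR^2$ with $c_d=d/(d+2)$, we get
\begin{equation*}
    \mathcal{I}_A(H)=\int_0^1 f(H'(s))\ud s,\qquad f(t)=\tfrac12 c_d\, r(t)^2,\qquad r(t):=(\omega_d t)^{-1/d}.
\end{equation*}
The first variation is therefore $D\mathcal{I}_A(H_0)(H-H_0)=\int_0^1 f'(H_0')(H'-H_0')\ud s$. Consequently
\begin{equation*}
    D_{\mathcal{I},A}(H[\omega],H[\mu])=\int_0^1\Big[f(t_1)-f(t_0)-f'(t_0)(t_1-t_0)\Big]\ud s,
\end{equation*}
with $t_1=H[\omega]'(s)$ and $t_0=H[\mu]'(s)$. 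This reduces the claim to a pointwise inequality in $s$.

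\emph{Step 2: upper bound on $W_2$.} For each $s$, map $u_{R_\omega(s)}$ to $u_{R_\mu(s)}$ by the dilation $x\mapsto (R_\mu(s)/R_\omega(s))x$. This dilation pushes $u_{R_\omega(s)}$ forward to $u_{R_\mu(s)}$ at cost $c_d(R_\omega(s)-R_\mu(s))^2$. Gluing these maps over $s$ gives a coupling of $\omega$ and $\mu$, so
\begin{equation*}
    W_2(\omega,\mu)^2\le c_d\int_0^1 (R_\omega(s)-R_\mu(s))^2\ud s.
\end{equation*}

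\emph{Step 3: pointwise Bregman inequality.} Write $R_1=r(t_1)$ and $R_0=r(t_0)$, so that $f'(t_0)=c_dR_0\,r'(t_0)$. The desired bound $f(t_1)-f(t_0)-f'(t_0)(t_1-t_0)\ge \tfrac12 c_d(R_1-R_0)^2$ expands to
\begin{equation*}
    R_1^2-R_0^2-2R_0r'(t_0)(t_1-t_0)\ge (R_1-R_0)^2.
\end{equation*}
This is equivalent to $2R_0\big[(R_1-R_0)-r'(t_0)(t_1-t_0)\big]\ge0$. That holds because $r(t)=(\omega_d t)^{-1/d}$ is convex on $(0,\infty)$, so $r$ lies above its tangent line at $t_0$. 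Integrating in $s$ and combining with Step 2 yields $D_{\mathcal{I},A}\ge \tfrac12 W_2(\omega,\mu)^2$.

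\emph{Main obstacle.} The main obstacle is the rigour of Step 1 rather than the algebra. One must justify that $H$ is differentiable with $H'>0$ on $(0,1)$ and that the layer decomposition holds for the radial decreasing densities considered; the paper's earlier lemma gives this for continuous, strictly decreasing, compactly supported profiles, and the general case can be handled by approximation or by working with a.e.\ derivatives. One must also interpret $D\mathcal{I}_A(H_0)(H-H_0)$ as the directional derivative $\int f'(H_0')(H'-H_0')\ud s$. Here $f'(t)\sim t^{-1-2/d}$ may be large where $H_0'$ is small, i.e.\ on the outer layers. I would handle this by noting that each integrand in Step 3 is nonnegative, so monotone convergence applies on truncations $s\in[\varepsilon,1-\varepsilon]$.
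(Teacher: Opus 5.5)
Your proposal is correct and follows the paper's proof essentially step for step. You use the same layer-cake formula $\mathcal{I}_A(H)=\frac{d}{2(d+2)}\int_0^1 R(s)^2\ud s$, the same reduction of the Bregman divergence to a pointwise inequality in $s$, and the same layer-wise dilation coupling giving $W_2(\omega,\mu)^2\le\frac{d}{d+2}\int_0^1(R[\omega]-R[\mu])^2\ud s$. Your Step 3 is a cleaner proof of the elementary inequality $x^{-2/d}-1+\frac{2}{d}(x-1)\ge(x^{-1/d}-1)^2$, which the paper states without justification: the Bregman divergence of $r^2$ equals $(r(t_1)-r(t_0))^2$ plus $2r(t_0)$ times the Bregman divergence of $r$, and the latter is nonnegative because $r$ is convex.
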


\begin{proof}
For notational simplicity, we denote $H(s)$ and $R(s)$ the height function and radius function of $\varrho$ at level $0\le s \le 1$. By tranforming the computation into polar coordinate, one has
\begin{equation*}
    \begin{aligned}
        \mathcal{I}_{A}(\varrho) &= \frac{1}{2} \int_{\mathbb{R}^d} \int_{0}^{1} |x|^2 \mathbf{1}[B_{R(s)}](x) H^{\prime}(s) \ud s \ud x\\
        &= \frac{d\omega_d}{2} \int_{0}^{1} \int_{0}^{R(s)}  r^2 H^{\prime}(s) \times r^{d-1}\ud r \ud s = \frac{d\omega_d }{2(d + 2)} \int_{0}^{1} R(s)^{2 + d} \times H^{\prime}(s) \ud s
    \end{aligned}
\end{equation*}
where $\omega_{d}$ is the volume of unit ball. Since $\omega_{d}H^{\prime} = R^{-d}$, we can get
\begin{equation*}
    \mathcal{I}_{A}(H) = \frac{d}{2(d + 2)} \int_{0}^{1} R(s)^2 \ud s = \frac{d}{2(d + 2)} (\omega_d)^{-2/d} \int_{0}^{1} H^{\prime}(s)^{-2/d} \ud s,
\end{equation*}
which shows that $\mathcal{I}_{A}$ can be seen as a well-defined functional over the height function. Now, we let $Q_{\omega} = H[\omega]^{\prime}$ and $Q_{\mu} = H[\mu]^{\prime}$. By the definition of Bregman divergence, one can get
\begin{equation*}
    D_{\mathcal{I}, A}(H[\omega], H[\mu]) = \frac{d}{2(d+2)} (\omega_d)^{-2/d}
    \int_0^1 Q_{\omega}^{-2/d} - Q_{\mu}^{-2/d}
       +\frac2d Q_{\mu}^{-2/d-1}(Q_{\omega} - Q_{\mu}) \ud s
\end{equation*}
Applying the following elementary inequality for $x = Q_{\omega}/Q_{\mu}$,
\begin{equation*}
    x^{-\frac{2}{d}} - 1 + \frac2d(x-1) \ge (x^{-\frac{1}{d}} - 1)^2,
\end{equation*}
we are able to get
\begin{equation*}
    D_{\mathcal{I}, A}(H[\omega], H[\mu]) \ge \frac{d}{2(d+2)} \int_0^1 \big(R[\omega](s) - R[\mu](s)\big)^2 \ud s.
\end{equation*}
In the following, we prove that 
\begin{equation*}
    \frac{d}{d + 2}\int_0^1 \big(R[\omega](s) - R[\mu](s)\big)^2 \ud s \ge W_{2}(\omega, \mu)^2,
\end{equation*}
which will conclude the proof.

In fact, let
\begin{equation*}
    \omega_{s} := U_{R[\omega](s)},\quad \mu_{s} := U_{R[\mu](s)}
\end{equation*}
be the uniform distributions on $B_{R[\omega](s)}$ and $B_{R[\mu](s)}$ respectively, which are the slices of the probability distributions at height level $H(s)$. For these simple uniform distributions, one can explicitly compute the Wasserstein distance between them:
\begin{equation*}
    W_2(\omega_{s}, \mu_s)^2 = \frac{d}{d+2}\big(R[\omega](s) - R[\mu](s)\big)^2,
\end{equation*}
because the scaling map
\begin{equation*}
    S: x \mapsto R[\mu](s)^{-1}R[\omega](s)x
\end{equation*}
is the optimal transport map from $\mu_s$ to $\omega_{s}$ by Lemma~\ref{lemma-3-brenier}. In other words, the height interpolation coincides with the Wasserstein geodesic interpolation between uniform distributions.

Finally, because $\omega = \int_{0}^{1}\omega_s \ud s$ and $\mu = \int_{0}^1\mu_s \ud s$, we have
\begin{equation*}
    W_{2}(\omega, \mu)^2 \le \int_{0}^{1} W_{2}\big(\omega_s, \mu_s\big)^2 \ud s = \frac{d}{d+2} \int_{0}^{1} \big(R[\omega](s) - R[\mu](s)\big)^2 \ud s,
\end{equation*}
according to \cite[Theorem 4.8]{villani2008optimal}.
\end{proof}

\subsection{Radial case}In this part, we prove the transport inequality between radial decreasing densities. Our main result of this part is the following.
\begin{proposition}\label{prop_10-raidal}For any radial decreasing density $\omega$ supported in $B_{2R}(0)$, we have
\begin{equation*}
    \mathcal{F}(\omega) - \mathcal{F}(\mu) \ge \frac{\Lambda_R}{2} W_{2}(\omega, \mu)^{2},
\end{equation*}
where $\Lambda_{R} := \inf_{0 \le r \le 2R} W_{\mathrm{ra}}^{\prime}(r)/r$.
\end{proposition}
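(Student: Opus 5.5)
We argue along the height interpolation $\varrho(t)$ from $\mu$ to $\omega$, with $H_t := (1-t)H[\mu]+tH[\omega]$. The plan is to split $\mathcal{F}$ into a part that is convex in $H$ and a part that is convex in $H$ after subtracting $\Lambda_R\,\mathcal{I}_A$, and then to use Lemma~\ref{lemma:7} to turn the resulting Bregman divergence into $W_2^2$.

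\textbf{Convexity of the pieces.} Write $\mathcal{F} = \mathcal{S} + \mathcal{W}$, where $\mathcal{S}$ is the internal energy and $\mathcal{W}$ the interaction energy. Following Delgadino--Yan--Yao \cite{delgadino2022uniqueness}:
\begin{enumerate}
\item For $m\ge 2$, $t\mapsto \mathcal{S}(H_t)$ is convex. In terms of the height function, $\mathcal{S}(H) = \frac{m}{m-1}\int_0^1 H(s)^{m-1}\ud s$ up to normalization, and this is convex in $H$ because $m-1\ge 1$.
\item The interaction energy is
\[
\mathcal{W}(H) = \frac12\int_0^1\!\!\int_0^1 \iint \mathbf{1}_{B_{R(s)}}(x)\,\mathbf{1}_{B_{R(s')}}(y)\,W(x-y)\ud x\ud y\, H'(s)H'(s')\ud s\ud s'.
\]
In \cite{delgadino2022uniqueness} its convexity along $H_t$ is reduced to a pairwise statement. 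For each pair of levels, the double integral is an interaction between two uniform balls, and it is a convex function of the pair of inverse-volume weights $H'(s), H'(s')$ whenever $W_{\mathrm{ra}}$ is increasing.
\end{enumerate}

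\textbf{Quantifying the interaction step.} I would make the second step quantitative by comparing $W$ with $\Lambda_R |x|^2/2$. Set $\widetilde W(x) := W(x) - \frac{\Lambda_R}{2}|x|^2$. On $|x|\le 2R$, which contains all relevant differences $x-y$ since both densities are supported in $B_{2R}$ (after adjusting $R$ or using $B_{4R}$), we have $\widetilde W_{\mathrm{ra}}'(r) = W_{\mathrm{ra}}'(r) - \Lambda_R r \ge 0$. So $\widetilde W$ is still radially nondecreasing there, and the DYY convexity argument applies to $\widetilde W$ as well. The point to check is that their convexity proof only uses monotonicity of $W_{\mathrm{ra}}$ on the range of distances actually occurring.

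For mass-one radial densities,
\[
\frac{\Lambda_R}{4}\iint |x-y|^2\varrho(x)\varrho(y)\ud x\ud y = \frac{\Lambda_R}{2}\int |x|^2\varrho \ud x = \Lambda_R\,\mathcal{I}_A(\varrho),
\]
because the cross term vanishes by symmetry. Hence
\[
\mathcal{F} - \Lambda_R\,\mathcal{I}_A = \mathcal{S} + \widetilde{\mathcal{W}}
\]
is convex along $H_t$.

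\textbf{Conclusion.} Since $\mu$ minimizes $\mathcal{F}$, we have $\frac{\mathrm{d}}{\mathrm{d}t}\big|_{t=0^+}\mathcal{F}(\varrho(t))\ge 0$. Let $\mathcal{G} := \mathcal{F} - \Lambda_R\,\mathcal{I}_A$. Convexity of $\mathcal{G}$ along $H_t$, together with the affine structure of $H_t$, gives
\[
\mathcal{G}(\omega) - \mathcal{G}(\mu) \ge \frac{\mathrm{d}}{\mathrm{d}t}\Big|_{0^+}\mathcal{G}(\varrho(t)).
\]
Adding back $\Lambda_R\,\mathcal{I}_A$ yields
\[
\mathcal{F}(\omega) - \mathcal{F}(\mu) \ge \frac{\mathrm{d}}{\mathrm{d}t}\Big|_{0^+}\mathcal{F}(\varrho(t)) + \Lambda_R\, D_{\mathcal{I},A}(H[\omega], H[\mu]) \ge \frac{\Lambda_R}{2}W_2(\omega,\mu)^2,
\]
where the last step uses Lemma~\ref{lemma:7}. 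The one-sided derivative exists by Lemma~\ref{lemma_8_height} and the smoothness of $\mu$ from Theorem~\ref{proposition-3-steady}.

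\textbf{Main obstacle.} The hard part is the convexity of $\widetilde{\mathcal{W}}$ in the second step. The DYY proof is stated for globally increasing kernels, while $\widetilde W$ is only increasing on $[0,2R]$ (respectively $[0,4R]$). I would first try to rerun their argument verbatim, tracking that only distances up to the support diameter appear. Failing that, I would modify $\widetilde W$ outside that range to be constant, which does not change any of the energies involved.
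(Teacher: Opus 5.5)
Your argument is essentially the paper's proof: height interpolation from $\mu$ to $\omega$, convexity of $\mathcal{S}$ and of the interaction energy of $W-\tfrac{\Lambda_R}{2}|x|^2$ via Delgadino--Yan--Yao, identification of the remaining quadratic piece with $\Lambda_R D_{\mathcal{I},A}(H[\omega],H[\mu])$, and then Lemma~\ref{lemma:7}. Your supporting-line inequality for the convex map $t\mapsto(\mathcal{F}-\Lambda_R\mathcal{I}_A)(\varrho(t))$, combined with the nonnegativity of the right derivative of $t\mapsto\mathcal{F}(\varrho(t))$ at $t=0$, is a slightly leaner substitute for the paper's Taylor formula with $D_W\mathcal{F}(\mu)=0$. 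The radius caveat you raise is real and is shared by the paper: for densities in $B_{2R}$ the pair distances reach $4R$, so the argument closes only when $\omega$ is supported in $B_R$ (which is what the paper's proof actually assumes), or else with $\Lambda$ taken as the infimum of $W_{\mathrm{ra}}'(r)/r$ over $[0,4R]$.
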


\begin{proof}
We split the energy into two parts,
\begin{equation*}
    \mathcal{F}(\varrho) = \mathcal{S}(\varrho) + \mathcal{I}(\varrho),\quad \mathcal{S}(\varrho) := \frac{1}{m-1} \int_{\mathbb{R}^d}\varrho^m\ud x \text{ and } \mathcal{I}(\varrho) := \frac12
    \int_{\mathbb{R}^d} (W \ast \varrho) \times \varrho  \ud x.
\end{equation*}
Let $\varrho(t)$ be the height interpolation from $\mu$ to $\omega$. Specifically, when both $\mu$ and $\omega$ are supported in $B_{R}(0)$, $\varrho(t)$ is supported in $B_{R}(0)$ for all $0 < t < 1$ \cite[Lemma 4.1]{delgadino2022uniqueness}. Our method of estimation is similar to \cite{carrillo2003kinetic}, where one combines a qualitative convexity of internal energy part $\mathcal{S}(\varrho(t))$, and the quantitative convexity comes from the internal energy $\mathcal{I}(\varrho(t))$.

In particular, for the internal energy, it is convex under height interpolation \cite[Proposition 2.3]{delgadino2022uniqueness},
\begin{equation}\label{2-10-s}
    \frac{\mathrm{d}^{2} \mathcal{S}(\varrho(t))}{\mathrm{d}t^{2}} \ge 0.
\end{equation}
For the interaction energy, we introduce the comparison kernel $A(x) = \tfrac{1}{2}|x|^2$, and decompose $W$ according to the work of Yan--Yao \cite{yan2022sharp} as follows,
\begin{equation*}
    W(x) = \Lambda_{R} \times A(x) + B(x),
\end{equation*}
where $\Lambda_{R}$ is the least attraction constant. For kernel $A$, one has
\begin{equation*}
    \begin{aligned}
        \mathcal{I}_{A}(\varrho) &= \frac{1}{2}\int_{\mathbb{R}^d} (A \ast \varrho) \times \varrho \ud x = \frac{1}{4}\int_{\mathbb{R}^d \times \mathbb{R}^d} |x|^2 + |\underline{x}|^2 - 2 x \cdot \underline{x} \ud \varrho(x) \otimes \varrho(\underline{x})\\
        &= \frac{1}{2} \int_{\mathbb{R}^d} |x|^2 \ud \varrho - \bigg[\frac{1}{2} \int_{\mathbb{R}^d} x \ud \varrho\bigg]^2\\
        &= \frac{1}{2} \int_{\mathbb{R}^d} |x|^2 \ud \varrho,
    \end{aligned}
\end{equation*}
The decomposition is illustrated in Figure~\ref{fig:2}. We leverage the height interpolation \cite{delgadino2022uniqueness} to filter out the non-convex part $B$, and then examine the relationship between the height interpolation and Wasserstein geodesic interpolation on the kernel $A$

We let $B_{\mathrm{ra}}$ be the radial profile, defined such that $B_{\mathrm{ra}}(|x|) = B(x)$, by the definition of the least attraction constant $\Lambda_{R} = \min_{0 \le r \le 2R} W^{\prime}_{\mathrm{ra}}(r)/r$, one has
\begin{equation*}
    B^{\prime}_{\mathrm{ra}}(r) = W^{\prime}_{\mathrm{ra}}(r) - \Lambda_{R}r > 0,\quad \text{ for all } r \le 2R.
\end{equation*}
Hence $B$ obeys Assumption~\ref{assumption} (the kernel $B$ outside $B_{2R}$ does not matter because all the considered densities $\varrho$ are supported in $B_{R}$). By \cite[Proposition 2.4]{delgadino2022uniqueness}, we have
\begin{equation*}
    \frac{\mathrm{d}^{2} \mathcal{I}_{B}(\varrho(t))}{\mathrm{d}t^{2}} \ge 0.
\end{equation*}
Therefore, we have a lower bound for the second variation of interaction energy given as follows
\begin{equation}\label{2-10-i}
    \frac{\mathrm{d}^{2} \mathcal{I}(\varrho(t))}{\mathrm{d}t^{2}} = \Lambda_{R}\frac{\mathrm{d}^{2} \mathcal{I}_{A}(\varrho(t))}{\mathrm{d}t^{2}} + \frac{\mathrm{d}^{2} \mathcal{I}_{B}(\varrho(t))}{\mathrm{d}t^{2}} \ge \Lambda_{R} \frac{\mathrm{d}^2 \mathcal{I}_{A}(\varrho(t))}{\mathrm{d}t^2}.
\end{equation}
Since $\mu$ is the minimizer, we have $D_{W} \mathcal{F}(\mu) = 0$. As $\varrho(t)$ is a Lipschitz continuous connecting $\mu$ to $\omega$, by using the standard second-variation formula, and inequalities~\eqref{2-10-s} and~\eqref{2-10-i}
\begin{equation*}
    \begin{aligned}
        \mathcal{F}(\omega) - \mathcal{F}(\mu) &= \int_{0}^{1} (1 - t) \frac{\mathrm{d}^{2} \mathcal{F}(\varrho(t))}{\mathrm{d}t^{2}} \ud t\\
        &= \int_{0}^{1} (1 - t) \bigg[\frac{\mathrm{d}^{2} \mathcal{S}(\varrho(t))}{\mathrm{d}t^{2}} + \frac{\mathrm{d}^{2} \mathcal{I}(\varrho(t))}{\mathrm{d}t^{2}}\bigg] \ud t \ge \Lambda_{R} \int_{0}^{1} (1 - t) \frac{\mathrm{d}^{2} \mathcal{I}_{A}(\varrho(t))}{\mathrm{d}t^{2}} \ud t.
    \end{aligned}
\end{equation*}
Finally, because the height interpolation is the affine interpolation of height functions, we have
\begin{equation*}
    \begin{aligned}
        \Lambda_{R} \int_{0}^{1} (1 - t) \frac{\mathrm{d}^{2} \mathcal{I}_{A}(\varrho(t))}{\mathrm{d}t^{2}} \ud t &= \Lambda_{R}\times \bigg[\mathcal{I}_A(\omega) - \mathcal{I}_A(\mu) - \frac{\mathrm{d}}{\mathrm{d}t}\Big|_{t = 0} \mathcal{I}_A(\varrho(t))\bigg]\\
        &= \Lambda_{R}\times D_{\mathcal{I}, A}(H[\omega], H[\mu])
    \end{aligned}
\end{equation*}
where $D_{\mathcal{I}, A}(\omega, \mu)$ is the Begamn divergence given in Lemma~\ref{lemma:7}. From Lemma~\ref{lemma:7},
\begin{equation*}
    D_{\mathcal{I}, A}(\omega, \mu) \ge \frac{1}{2}W_{2}(\omega, \mu)^{2},
\end{equation*}
which concludes the proof.
\end{proof}

\begin{figure}[t]
    \centering
    \includegraphics[width=0.75\linewidth]{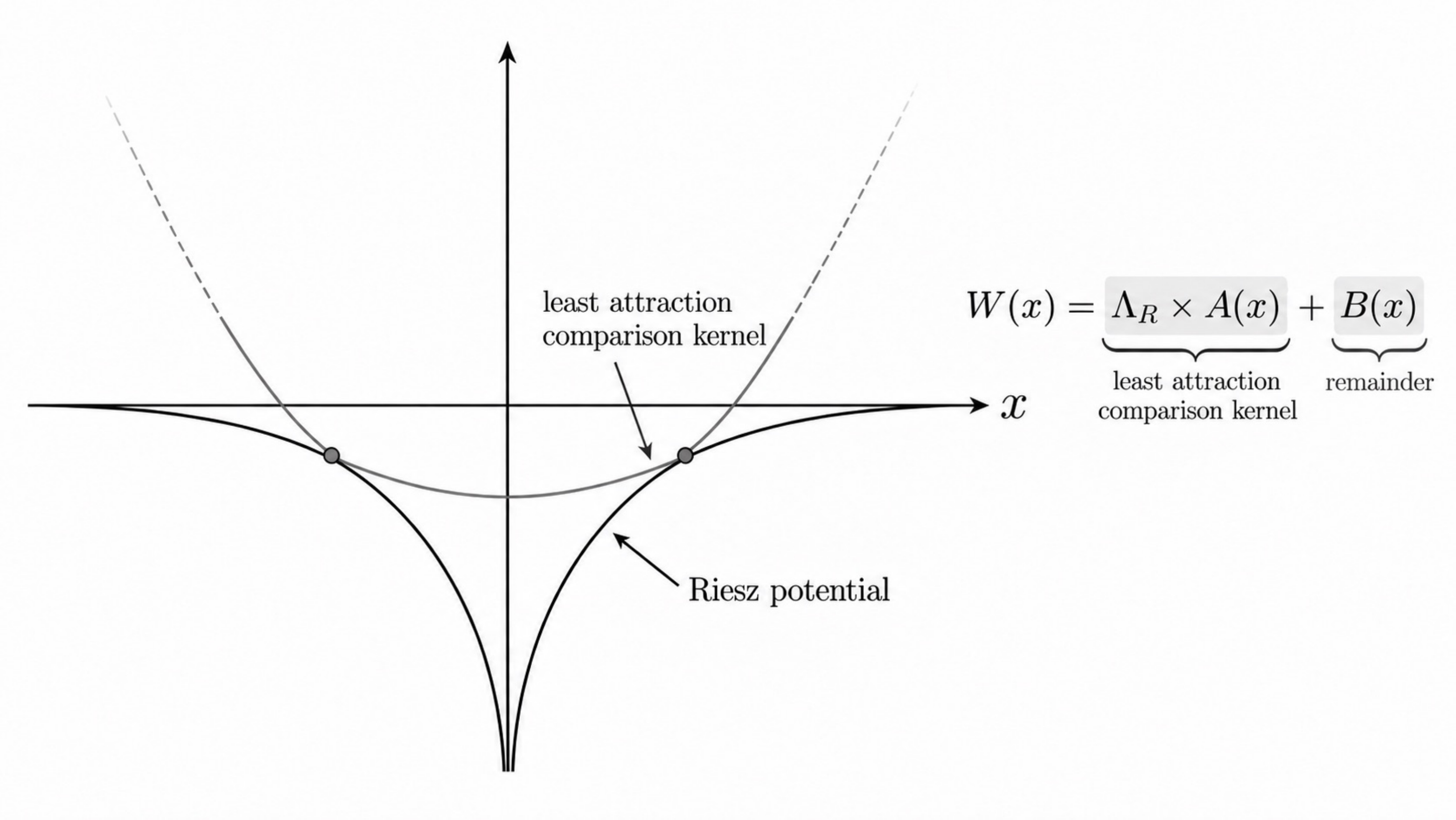}
    \caption{\small{Illustration of the kernel decomposition for a Reisz kernel $W(x) = -2|x|^{-\frac{1}{2}}$. By using comparison principle of ODE, the gradient flow on the Reisz kernel (which converges to $0$ in finite time) is at least faster than the least attraction comparison kernel in the region between contact points. The heuristics of the construction is that the attraction effect of the Reisz kernel is bounded below by that of the least attraction kernel.}}
    \label{fig:2}
\end{figure}

\subsection{From radial to general}
In this section, we extend the transport inequality between radial decreasing densities in Proposition~\ref{prop_10-raidal} to the general case where $\omega$ is compactly supported. Building on the quantitative stability of Yan--Yao \cite{yan2022sharp}, we claim the following reduction inequality.
\begin{proposition}\label{eq:nonradial-stability}For any $\omega$ supported in $B_{R}(x_{\omega})$ for some $x_{\omega}$ in $\mathbb{R}^d$, let $\omega_{\sharp}$ be is radial decreasing rearrangement, we have
\begin{equation*}
    \mathcal{F}(\omega) - \mathcal{F}(\omega_{\sharp}) \ge \frac{\Lambda_{R}}{2} \inf_{a \in \mathbb{R}^{d}} W_2(\tau_{a}\omega, \omega_{\sharp})^2.
\end{equation*}
where $\Lambda_{R} := \inf_{0 \le r \le 2R} W_{\mathrm{ra}}^{\prime}(r)/r$.
\end{proposition}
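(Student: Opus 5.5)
The plan is to split $\mathcal{F}=\mathcal{S}+\mathcal{I}$ as in the proof of Proposition~\ref{prop_10-raidal}, handle the internal energy by invariance under rearrangement, and reduce the interaction gap to the second-momentum stability of Lemma~\ref{lemma-5-redution} via the kernel decomposition $W=\Lambda_R A+B$.

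\emph{Step 1 (normalization).} By translation invariance of $\mathcal{F}$, replace $\omega$ by $\tau_{-c}\omega$, where $c=\int x\ud\omega$ is its center of mass. Since $\omega_\sharp$ is unchanged by translation, it suffices to prove
\[
\mathcal{F}(\omega)-\mathcal{F}(\omega_\sharp)\ge \tfrac{\Lambda_R}{2}W_2(\omega,\omega_\sharp)^2
\]
for centered $\omega$; the infimum over $a$ is then bounded by the choice $a=-c$. After centering, $\mathrm{supp}\,\omega\subseteq B_{2R}(0)$, because $c$ lies in the convex hull of $B_R(x_\omega)$. Hence $|x-y|\le 2R$ on the support of $\omega\otimes\omega$. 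The same bound holds for $\omega_\sharp$: $|\mathrm{supp}\,\omega_\sharp|=|\mathrm{supp}\,\omega|\le|B_R|$ gives $\mathrm{supp}\,\omega_\sharp\subseteq \overline{B_R(0)}$.

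\emph{Step 2 (internal energy).} Rearrangement preserves $L^m$ norms, so $\mathcal{S}(\omega)=\mathcal{S}(\omega_\sharp)$. Thus
\[
\mathcal{F}(\omega)-\mathcal{F}(\omega_\sharp)=\mathcal{I}(\omega)-\mathcal{I}(\omega_\sharp)=\Lambda_R\big(\mathcal{I}_A(\omega)-\mathcal{I}_A(\omega_\sharp)\big)+\big(\mathcal{I}_B(\omega)-\mathcal{I}_B(\omega_\sharp)\big).
\]

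\emph{Step 3 (the $B$ part).} On $[0,2R]$ we have $B_{\mathrm{ra}}'=W_{\mathrm{ra}}'-\Lambda_R r\ge0$, and only pairwise distances at most $2R$ occur for both measures. So we may modify $B$ outside $B_{2R}$ to a radially nondecreasing kernel without changing $\mathcal{I}_B(\omega)$ or $\mathcal{I}_B(\omega_\sharp)$. Riesz's rearrangement inequality, in the form for radially \emph{increasing} kernels (apply it to $-B$ truncated, or write $B_{\mathrm{ra}}(r)=B_{\mathrm{ra}}(2R)-\int \mathbf{1}_{r<s}\,\mathrm{d}B_{\mathrm{ra}}(s)$ as a superposition of indicator kernels of balls), then gives $\mathcal{I}_B(\omega)\ge\mathcal{I}_B(\omega_\sharp)$. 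The singularity of $B$ at the origin is handled by monotone truncation.

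\emph{Step 4 (the $A$ part).} The identity in the proof of Proposition~\ref{prop_10-raidal} gives $\mathcal{I}_A(\varrho)=\tfrac12 M_2(\varrho)-\tfrac12|\int x\ud\varrho|^2$. Both $\omega$ (after centering) and $\omega_\sharp$ have zero mean, so $\mathcal{I}_A(\omega)-\mathcal{I}_A(\omega_\sharp)=\tfrac12\big(M_2(\omega)-M_2(\omega_\sharp)\big)$. By Lemma~\ref{lemma-5-redution} this is at least $\tfrac12 W_2(\omega,\omega_\sharp)^2$. Combining Steps 2–4 yields the claim with constant $\Lambda_R/2$.

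The main obstacle is Step 3. Riesz's inequality must be applied with a kernel that is only increasing on $[0,2R]$ and possibly singular at $0$. This requires the support bound from Step 1, which is exactly where the hypothesis $\mathrm{supp}\,\omega\subseteq B_R(x_\omega)$ is used, together with a careful layer-cake representation of $B$ so that the rearrangement inequality applies to each ball-indicator layer. I would first try to write $-B$, restricted to $[0,2R]$ and extended constantly beyond, as a positive combination of indicators $\mathbf{1}_{B_s}$ plus a constant, and apply the classical Riesz inequality layer by layer.
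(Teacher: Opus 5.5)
Your proof is correct and follows the paper's route. The paper uses $\mathcal{S}(\omega)=\mathcal{S}(\omega_\sharp)$ and then cites Yan--Yao for the interaction inequality, noting that it rests on Lemma~\ref{lemma-5-redution}. Your Steps 3--4 are exactly the argument behind that citation: the splitting $W=\Lambda_R A+B$, Riesz rearrangement for the part $B$ that is radially nondecreasing on $[0,2R]$, and the centered identity $\mathcal{I}_A=\tfrac12 M_2$ combined with Lemma~\ref{lemma-5-redution}. One small slip: the bound $|x-y|\le 2R$ on $\mathrm{supp}\,\omega\otimes\omega$ follows directly from $\mathrm{supp}\,\omega\subseteq B_R(x_\omega)$, since diameter is translation invariant, and not from $\mathrm{supp}\,\omega\subseteq B_{2R}(0)$, which would only give $4R$.
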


\begin{proof}
Since rearrangement preserves every $L^{m}(\mathbb{R}^{d})$ norm, $\mathcal{S}(\omega)=\mathcal{S}(\omega_{\sharp})$. The interaction part
\begin{equation*}
    \mathcal{I}(\omega) - \mathcal{I}(\omega_{\sharp}) \ge \frac{\Lambda_{R}}{2} \inf_{a \in \mathbb{R}^{d}} W_2(\tau_{a}\omega, \omega_{\sharp})^2.
\end{equation*}
was proven by Yan--Yan \cite[Theorem 1.2]{yan2022sharp} based on Lemma~\ref{lemma-5-redution}.
\end{proof}

\begin{proof}[Proof of Theorem~\ref{theorem:1}]
The idea is L. Simon's splitting method \cite[Chapter 3.12]{simon2012theorems} in his generalization of Lojasiewicz inequalities. We reduce $\omega$ into the sub-manifold of radial decreasing densities via radial rearrangement $\omega_{\sharp}$, and examine the relation between $\omega_\sharp$ and $\mu$, whose geometric structures are relatively simple.

In particular, we split the energy gap:
\begin{equation}\label{3-5-splitting}
    \mathcal{F}(\omega) -\mathcal{F}(\mu) = \underbrace{(\mathcal{F}(\omega) - \mathcal{F}(\omega_{\sharp}))}_{\text{reduction}} + \underbrace{(\mathcal{F}(\omega_{\sharp}) -\mathcal{F}(\mu))}_{\text{radial profile}}.
\end{equation}
For the reduction part, we apply Proposition~\ref{eq:nonradial-stability}, and for the radial profile, we apply Proposition~\ref{prop_10-raidal}. By doing these, we have
\begin{equation*}
    \mathcal{F}(\omega) -\mathcal{F}(\mu) \ge \frac{\Lambda_{R}}{2} \inf_{a \in \mathbb{R}^{d}} W_{2}(\tau_{a} \omega, \omega_{\sharp})^{2} + \frac{\Lambda_{R}}{2} W_{2}(\omega_{\sharp}, \mu)^{2}.
\end{equation*}
By $2a^2 + 2b^2 \ge (a + b)^2$ and triangle inequality, we can get
\begin{equation*}
    \mathcal{F}(\omega) -\mathcal{F}(\mu) \ge \frac{\Lambda_{R}}{4} \inf_{a \in \mathbb{R}^{d}} \big(W_{2}(\tau_{a} \omega, \omega_{\sharp}) + W_{2}(\omega_{\sharp}, \mu)\big)^{2} \ge \frac{\Lambda_{R}}{4} \inf_{a \in \mathbb{R}^{d}} W_{2}(\tau_{a} \omega, \mu)^{2},
\end{equation*}
which is the transport inequality we claim.
\end{proof}

\section{Spectrum of the Wasserstein Hessian}\label{sec:4-spec}
Our goal in this section is to leverage an orthogonality between the direction of reduction $\omega \to \omega_{\sharp}$ and the submanifold of radial decreasing densities to improve the estimate in~\eqref{3-5-splitting} and then locate the spectrum of the Wasserstein Hessian expressed by the Poincar\'e inequality
\begin{equation*}
    (X, D^{2}_{W}\mathcal{F}(\mu) X)_\mu \ge \Lambda_R \|X\|^{2}_{L^{2}(\mathbb{R}^{d},\, \mu)},
\end{equation*}
for all $\int_{\mathbb{R}^{d}} X\ud \mu = 0$ in Theorem~\ref{theorem:2_spectrum}. The computation of Wasserstein Hessian is given in the following proposition. For its derivation, we refer the reader to \cite[Section 3]{carrillo2003kinetic}.
\begin{proposition}\label{propo-12-wass}In the tangent space $T_{\mu}\mathscr{P}_2(\mathbb{R}^d)$, we have
\begin{equation*}
    (X, D^{2}_{W}\mathcal{F}(\mu) X)_{\mu} = (X, D^{2}_{W}\mathcal{S}(\mu) X)_{\mu} + (X, D^{2}_{W}\mathcal{I}(\mu) X)_{\mu},
\end{equation*}
where $\mathcal{S}(\mu)$ and $\mathcal{I}(\mu)$ are the internal and interaction energies, and
\begin{equation*}
    \begin{aligned}
        (X, D^{2}_{W}\mathcal{S}(\mu) X)_{\mu} &= \int_{\mathbb{R}^{d}} \|\nabla X\|^{2}_{\mathrm{HS}} + (m - 1) (\mathrm{div} X)^{2} \ud \mu^{m}\\
        (X, D^{2}_{W}\mathcal{I}(\mu) X)_{\mu} &= \int_{\mathbb{R}^{d} \times \mathbb{R}^{d}} \frac{1}{2}\delta_{y \to x} X \cdot \big(D^{2}W(x - y) \delta_{y \to x} X\big) \ud \mu \otimes \mu,
    \end{aligned}
\end{equation*}
where $\|\cdot\|_{\mathrm{HS}}$ is the Hilbert-Schmidt norm of a matrix, also known as the Frobenius norm, given by squared average of the matrix coefficients, and $\delta_{y \to x} X := X(x) - X(y)$ denotes the misalignment of the velocity at $x$ and $y$.
\end{proposition}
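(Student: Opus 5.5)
The plan is to compute the second derivative of $t\mapsto\mathcal{F}(\varrho_t)$ with $\varrho_t=(\mathrm{id}+tX)\#\mu$, treating the internal and interaction parts separately and then adding them; this is the route of Carrillo--McCann--Villani \cite[Section 3]{carrillo2003kinetic}. Throughout I take $X$ smooth and compactly supported, with $X=\nabla\phi$ a gradient, and extend to $T_{\mu}\mathscr{P}_2(\mathbb{R}^d)$ by density at the end.

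\emph{Internal part.} Write $S_t=\mathrm{id}+tX$. For small $t$ this is a diffeomorphism, so the change of variables formula gives $\varrho_t(S_t x)\det\nabla S_t(x)=\mu(x)$. Hence
\begin{equation*}
\mathcal{S}(\varrho_t)=\frac{1}{m-1}\int_{\mathbb{R}^d}\mu^m\,(\det(I+t\nabla X))^{1-m}\ud x .
\end{equation*}
Expand $\log\det(I+tA)=t\,\mathrm{tr}A-\tfrac{t^2}{2}\mathrm{tr}(A^2)+O(t^3)$ and set $f(t)=(\det(I+tA))^{1-m}$. Then
\begin{equation*}
f''(0)=(m-1)\big[\mathrm{tr}(A^2)+(m-1)(\mathrm{tr}A)^2\big].
\end{equation*}
For $A=\nabla X=D^2\phi$, which is symmetric, $\mathrm{tr}(A^2)=\|\nabla X\|_{\mathrm{HS}}^2$. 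Dividing by $m-1$ gives the stated integrand against $\ud\mu^m$, that is, $\mu^m\ud x$. Two points need care:
\begin{itemize}
\item For a non-gradient $X$ one would get $\mathrm{tr}((\nabla X)^2)$ instead of the HS norm, so the restriction to gradients, i.e.\ to the tangent space, is used exactly here.
\item Differentiating under the integral is justified because $\mu$ is continuous and compactly supported by Theorem~\ref{proposition-3-steady}, and the $t$-expansion is uniform on $\mathrm{supp}\,X$.
\end{itemize}

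\emph{Interaction part.} Writing the interaction energy as a double integral,
\begin{equation*}
\mathcal{I}(\varrho_t)=\frac12\iint W\big(x-y+t(X(x)-X(y))\big)\ud\mu(x)\ud\mu(y).
\end{equation*}
Differentiating twice in $t$ formally gives $\tfrac12\iint \delta_{y\to x}X\cdot D^2W(x-y)\,\delta_{y\to x}X\ud\mu\otimes\mu$, which is the claim. The main obstacle is justifying this, since $W$ may be singular at the origin like $|x|^{k}$ with $k>-d$, and $D^2W$ fails to be integrable near the diagonal when $k\le 2-d$. I would first try to absorb the singularity with the smallness of $\delta_{y\to x}X$. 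Because $X$ is Lipschitz, $|\delta_{y\to x}X|\le L|x-y|$, and Assumption~\ref{assumption-1}(2) with Assumption~\ref{assumption}(3) gives $|D^2W(z)|\lesssim |z|^{k-2}$. So the integrand is bounded by $L^2|x-y|^{k}$, which is locally integrable against $\mu\otimes\mu$ because $\mu\in L^\infty$ and $k>-d$. The same bound holds uniformly along the segment $x-y+s\,t\,\delta_{y\to x}X$: for $|t|L<1/2$,
\begin{equation*}
|x-y+s t\,\delta_{y\to x}X|\ge \tfrac12|x-y| .
\end{equation*}
With this uniform majorant, I would apply Taylor's formula with integral remainder and dominated convergence to pass the limit under the integral, which makes the formula rigorous. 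Adding the two parts gives the proposition. The resulting quadratic form then defines $(X,D^2_W\mathcal{F}(\mu)X)_\mu$ on smooth gradients, to be extended later via Friedrichs.
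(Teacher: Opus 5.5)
Your proposal is correct and follows the same route as the paper, which gives no argument of its own and simply cites the direct Carrillo--McCann--Villani computation of $\frac{\mathrm{d}^2}{\mathrm{d}t^2}\mathcal{F}((\mathrm{id}+tX)\#\mu)$: change of variables for $\mathcal{S}$ and a second-order Taylor expansion of the double integral for $\mathcal{I}$. Your domination argument via $|\delta_{y\to x}X|\lesssim|x-y|$ and $|D^2W(z)|\lesssim|z|^{k-2}$ is the same bound the paper uses in proving Lemma~\ref{lemma-18-dense}, and your remark that the Hilbert--Schmidt form of the internal term needs $X$ to be a gradient is a correct and useful precision.
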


Before start, we establish some functional analysis lemmas. Let $\Omega$ be the support of the minimizer $\mu$, we define degenerate weighted Sobolev space.
\begin{equation*}
    H^{1}(\Omega, \mu) := \bigg\{\phi \text{ in } L^{1}_{\mathrm{loc}}(\Omega) : \int_{\Omega} |\nabla \phi|^2 \ud \mu < \infty\bigg\}.
\end{equation*}
We let $H^{1}(\Omega, \mu)/\mathbb{R}$ where the quotient over $\mathbb{R}$ identifies the function up to an addition constant. It is naturally equipped with a norm
\begin{equation}\label{4-5-qo}
    \|\phi\|_{H^{1}(\Omega, \,\mu)/\mathbb{R}} = \|\phi - (\phi)_{\mu}\|_{L^{2}(\Omega,\,\mu)} + \|\nabla \phi\|_{L^{2}(\Omega,\,\mu)}
\end{equation}
where $(\phi)_\mu = \int_{\Omega} \phi \ud \mu$ is the weighted average of the function. We show the following.
\begin{lemma}\label{lemma-17-iso}
$H^{1}(\Omega, \mu)/\mathbb{R}$ is isomorphic to the tangent space $T_{\mu}\mathscr{P}_{2}(\mathbb{R}^{d})$ as Hilbert spaces, where the isomorphism is the gradient operator.
\end{lemma}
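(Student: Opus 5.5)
The plan is to show that the gradient map $\nabla : H^{1}(\Omega,\mu)/\mathbb{R} \to L^{2}(\Omega,\mu;\mathbb{R}^{d})$ is a well-defined linear isometric embedding up to equivalence of norms, and that its range is exactly $T_{\mu}\mathscr{P}_{2}(\mathbb{R}^{d})$.

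\textbf{Step 1: the map is well defined and bounded.} Well-definedness modulo constants and boundedness, $\|\nabla\phi\|_{L^2(\mu)} \le \|\phi\|_{H^1(\Omega,\mu)/\mathbb{R}}$, are immediate from \eqref{4-5-qo}.

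\textbf{Step 2: a weighted Poincar\'e inequality.} The converse estimate requires
\begin{equation*}
    \|\phi - (\phi)_{\mu}\|_{L^{2}(\Omega,\,\mu)} \le C\|\nabla \phi\|_{L^{2}(\Omega,\,\mu)}.
\end{equation*}
Here I would use Theorem~\ref{proposition-3-steady}: $\Omega$ is a ball $B_{R_0}$ (radial, strictly decreasing profile), and $\mu \sim \delta(x,\partial\Omega)^{1/(m-1)}$, a power-type boundary weight with exponent $\gamma = 1/(m-1) \in (0,1]$ since $m\ge2$. Weights $\delta^{\gamma}$ with $\gamma > -1$ are $A_2$-type near a smooth boundary, and Poincar\'e inequalities on balls for such weights are classical; alternatively one proves it directly via the one-dimensional Hardy inequality in the radial variable near $\partial\Omega$, combined with the unweighted Poincar\'e inequality on an interior ball where $\mu$ is bounded below. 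I expect this step to be the main obstacle, since the degeneracy at the free boundary is exactly what could break coercivity; the non-degeneracy in item (3) of Theorem~\ref{proposition-3-steady} is what saves it. With the Poincar\'e inequality in hand, the norm \eqref{4-5-qo} is equivalent to $\|\nabla\phi\|_{L^2(\mu)}$, and injectivity modulo constants follows since $\Omega$ is connected. Completeness of $H^{1}(\Omega,\mu)/\mathbb{R}$ then follows: a Cauchy sequence has $\phi_n-(\phi_n)_\mu$ and $\nabla\phi_n$ converging in $L^2(\mu)$, hence in $L^2_{\mathrm{loc}}(\Omega)$ because $\mu$ is locally bounded below, and distributional limits identify the limit's gradient.

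\textbf{Step 3: the range is $T_{\mu}\mathscr{P}_2(\mathbb{R}^d)$.} For $\phi\in C_c^\infty(\mathbb{R}^d)$, the restriction lies in $H^1(\Omega,\mu)$, so the range contains the generating set. Since the range is closed (by Step 2 and completeness), it contains the $L^2(\mu)$-closure $T_\mu\mathscr{P}_2$. For the reverse inclusion, it suffices to show $C^\infty(\overline{\Omega})$ is dense in $H^1(\Omega,\mu)$. In polar coordinates I would dilate, $\phi_\lambda(x)=\phi(\lambda x)$ with $\lambda<1$, which is smooth up to $\partial\Omega$ after mollification. Convergence $\nabla\phi_\lambda\to\nabla\phi$ in $L^2(\mu)$ holds because $\mu$ is radially decreasing, so $\mu(x)\le \mu(\lambda x)$ times a bounded factor. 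Such functions extend to $C_c^\infty(\mathbb{R}^d)$, and their gradients lie in $T_\mu$. Hence $\nabla$ is a bijective bounded operator with bounded inverse, i.e.\ a Hilbert space isomorphism.
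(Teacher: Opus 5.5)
Your proposal is correct, and it takes a somewhat different and more complete route than the paper.

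\textbf{Poincar\'e inequality.} The paper gets the inequality by citing Seis's Hardy--Poincar\'e inequality
\begin{equation*}
\int_\Omega|\phi-(\phi)_\mu|^2\ud\mu^{2-m}\lesssim\int_\Omega|\nabla\phi|^2\ud\mu ,
\end{equation*}
combined with the pointwise bound $\mu\le\|\mu\|_{L^\infty}^{m-1}\mu^{2-m}$. You instead prove the weaker but sufficient inequality with the weight $\delta^{\gamma}$, $\gamma=1/(m-1)$, directly. One correction: $\delta^{\gamma}$ is an $A_2$ weight only for $-1<\gamma<1$, so that justification fails exactly at $m=2$. Your alternative, the one-dimensional Hardy argument along rays, does work for every $\gamma>-1$. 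At $\gamma=1$ it produces a factor $s\log(s_0/s)$, which is still integrable. So rely on that route.

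\textbf{Range of the gradient.} Here your proof supplies something the paper omits. The paper declares surjectivity ``by definition'' and elsewhere cites density of $C^\infty(\Omega)$ in $H^1(\Omega,\mu)$. Functions smooth only in the interior do not by themselves place $\nabla\phi$ in the $L^2(\mu)$-closure of $\nabla C_c^\infty(\mathbb{R}^d)$. You prove both inclusions:
\begin{itemize}
\item Poincar\'e plus completeness makes the range closed, so $T_{\mu}\mathscr{P}_{2}(\mathbb{R}^{d})\subseteq\nabla H^1(\Omega,\mu)$.
\item The dilation $\phi(\lambda x)$ with $\lambda<1$ gives density of functions smooth up to $\partial\Omega$, which yields the reverse inclusion. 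It is controlled by $\mu(x)\le\mu(\lambda x)$, so that $\|\nabla\phi_\lambda\|_{L^2(\mu)}\le\lambda^{1-d/2}\|\nabla\phi\|_{L^2(\mu)}$, together with a continuous approximation of $\nabla\phi$.
\end{itemize}

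\textbf{Trade-off.} The paper's route is shorter and yields a stronger Hardy-type inequality with the weight $\mu^{2-m}$, which is the natural one for linearized porous-medium operators. Yours is self-contained, uses only the radial monotonicity and boundary non-degeneracy of $\mu$, and justifies the identification of the range with $T_{\mu}\mathscr{P}_{2}(\mathbb{R}^{d})$.
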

\begin{lemma}\label{lemma-18-dense}
The operator $D^{2}_{W}\mathcal{F}(\mu)$ is symmetric, nonnegative, and densely defined on $T_{\mu}\mathscr{P}_{2}(\mathbb{R}^d)$.
\end{lemma}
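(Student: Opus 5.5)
I would prove Lemma~\ref{lemma-18-dense} by working with the quadratic form supplied by Proposition~\ref{propo-12-wass} and the identification of Lemma~\ref{lemma-17-iso}. On the dense subspace $\mathcal{D} := \{\nabla\phi : \phi \in C_c^\infty(\mathbb{R}^d)\}$ I would define the operator through the bilinear form obtained by polarizing the second variation:
\begin{equation*}
    \mathcal{Q}(X, Z) := \int_{\mathbb{R}^d} \nabla X : \nabla Z + (m-1)\,\mathrm{div}X\,\mathrm{div}Z \ud \mu^m + \frac12\int_{\mathbb{R}^d\times\mathbb{R}^d} \delta_{y\to x}X \cdot D^2W(x-y)\,\delta_{y\to x}Z \ud \mu\otimes\mu .
\end{equation*}
Density of $\mathcal{D}$ in $T_\mu\mathscr{P}_2(\mathbb{R}^d)$ holds by Definition~\ref{def_4_tangent}. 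I would then check that $\mathcal{Q}$ is finite on $\mathcal{D}$ and represent it as $\mathcal{Q}(X,Z) = (X, LZ)_\mu$ with $LZ \in T_\mu\mathscr{P}_2(\mathbb{R}^d)$. The natural route is to integrate by parts in the diffusion term against the weight $\mu^m$ and to write the interaction term as a linear operator in $Z$. Symmetry $(X,LZ)_\mu = (LX,Z)_\mu$ is then immediate from the symmetry of $\mathcal{Q}$, which in turn follows from the symmetry of $D^2W$ and the fact that $D^2W(x-y) = D^2W(y-x)$ because $W$ is even.

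The interaction term is finite on $\mathcal{D}$ because $|\delta_{y\to x}X| \le \|\nabla X\|_\infty |x-y|$ and, by Assumption~\ref{assumption-1}(2), $|D^2W(z)| \lesssim |z|^{k-2}$ near $0$. Together these give an integrand of size $|x-y|^k$, which is integrable against $\mu\otimes\mu$ since $k>-d$ and $\mu$ is bounded and compactly supported (Theorem~\ref{proposition-3-steady}). The same Taylor bound makes $Z \mapsto \int D^2W(x-y)\,\delta_{y\to x}Z \ud\mu(y)$ a function in $L^2(\mu)$ when $Z$ is smooth. For the diffusion term, $\mu^{m-1}$ is Lipschitz up to the free boundary by the non-degeneracy in Theorem~\ref{proposition-3-steady}(3). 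Hence $\nabla\mu^m = \mu\,\nabla\mu^{m-1}\cdot\frac{m}{m-1}$ is of the form (bounded)$\times\mu$, and integrating by parts produces an expression of the form $\int X\cdot(\text{bounded smooth-coefficient operator of } Z)\ud\mu$ with no boundary term, since $\mu^m$ vanishes on $\partial\Omega$. Finally I would project onto $T_\mu\mathscr{P}_2(\mathbb{R}^d)$ orthogonally in $L^2(\mu)$ to obtain $LZ$.

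Nonnegativity I would derive from the minimality of $\mu$. For $X = \nabla\phi \in \mathcal{D}$ the map $\mathrm{id}+tX$ is a diffeomorphism for small $t$, and $t\mapsto\mathcal{F}((\mathrm{id}+tX)\#\mu)$ has a minimum at $t=0$, so its second derivative, which equals $\mathcal{Q}(X,X)$ by Proposition~\ref{propo-12-wass}, is nonnegative. Alternatively, $\mathcal{Q}(X,X) \ge 0$ follows pointwise from Theorem~\ref{theorem:1}, since the energy gap is at least $0$.

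The main obstacle is the integration by parts near the degenerate free boundary. One must show that the resulting term actually lies in $L^2(\mu)$, and this hinges on the regularity of $\nabla\mu^{m-1}$ up to $\partial\Omega$. If the integration by parts fails to give a clean $L^2(\mu)$ representative, the fallback is to note that $\mathcal{Q}$ is a closable, nonnegative, symmetric form on the dense domain $\mathcal{D}$, and to obtain the operator through the Friedrichs construction. Symmetric and nonnegative then hold on the form domain, which still contains $\mathcal{D}$ and is therefore dense.
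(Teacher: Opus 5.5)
Your core argument is the paper's. Density is immediate from Definition~\ref{def_4_tangent}, and the diffusion term is finite because $\mu^m$ is bounded. The interaction term is finite because $|\delta_{y\to x}X|\lesssim|x-y|$ together with $W'_{\mathrm{ra}}(r)/r,\ |W''_{\mathrm{ra}}(r)|\lesssim r^{k-2}$ leaves an integrable $|x-y|^{k}$ singularity, and nonnegativity follows from minimality of $\mu$ along $t\mapsto(\mathrm{id}+tX)\#\mu$. Working directly with gradients of $C_c^\infty(\mathbb{R}^d)$ functions is slightly cleaner than the paper's appeal to Lemma~\ref{lemma-13-density}, because the Lipschitz bound on $\delta_{y\to x}X$ needs $\nabla X$ bounded up to $\partial\Omega$.

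The paper stops at finiteness of the quadratic form and then invokes Friedrichs; it never constructs a representative $LZ\in T_{\mu}\mathscr{P}_{2}(\mathbb{R}^d)$. Your extra step that does so contains an error you did not flag. It sits in the interaction term, not in the diffusion term you named as the main obstacle. The pointwise bound on $D^2W(x-y)\,\delta_{y\to x}Z$ is $|x-y|^{k-1}$, not $|x-y|^{k}$, so ``the same Taylor bound'' gives a bounded function of $x$ only when $k>1-d$.

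For $-d<k\le 1-d$ (for instance every Riesz exponent when $d=1$), the inner integral $\int D^2W(x-y)\,\delta_{y\to x}Z\ud\mu(y)$ is not absolutely convergent. Splitting $\tfrac12\iint \delta_{y\to x}X\cdot D^2W(x-y)\,\delta_{y\to x}Z\ud\mu\otimes\mu$ into $\int X(x)\cdot(\cdots)\ud\mu(x)$ by Fubini is therefore not justified. You would need a principal value that exploits the evenness of $D^2W$. Controlling that principal value in $L^2(\mu)$ near $\partial\Omega$, where $\mu\sim\delta(x,\mathbb{R}^d\setminus\Omega)^{1/(m-1)}$ is not smooth, is a genuine issue.

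The fallback is not free either. Closability of a nonnegative symmetric form is automatic when the form comes from a symmetric operator on $\mathcal{D}$, which is exactly what is in doubt. Otherwise it has to be proved, e.g.\ by bounding the indefinite interaction part (recall $W''_{\mathrm{ra}}<0$ for Riesz kernels) relative to the diffusion part.

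The paper's proof addresses none of this, so your proposal is complete at the paper's level of rigor. The caveat concerns the strict operator-theoretic sense of ``densely defined'' that the Friedrichs step actually uses.
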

The proofs of Lemmas~\ref{lemma-17-iso} and~\ref{lemma-18-dense} are given in Section~\ref{sec:4-3-4}.
\begin{proof}[Proof of self-adjoint extension in Theorem~\ref{theorem:2_spectrum}]
The self-adjoint extension of $D_{W}^{2}\mathcal{F}(\mu)$ on $T_{\mu}\mathscr{P}_{2}(\mathbb{R}^{d})$ is the result of Friedrichs extension theorem, by Lemma~\ref{lemma-18-dense}.
\end{proof}

\subsection{Spherical harmonics}To locate the spectrum, we let $(r, \theta) = (|x|, x/|x|)$ be the polar coordinate, and
\begin{equation*}
    -\Delta_{\mathbb S^{d-1}}Y_{\ell, \nu}(\theta)
    =
    \lambda_\ell Y_{\ell, \nu}(\theta),
    \quad
    \lambda_\ell=\ell(\ell+d-2).
\end{equation*}
be the spherical harmonics where $1 \le \nu \le n_{\ell}$ and $n_{\ell}$ is the number of spherical harmonics in the $\ell$-th sector. Consider the spherical harmonic decomposition in polar coordinates,
\begin{equation*}
    \phi(r \theta) = \sum_{\ell = 0}^{\infty} \sum_{\nu = 1}^{n_\ell} \phi_{\ell, \nu}(r) Y_{\ell, \nu}(\theta),
\end{equation*}
and the radial coefficients $\phi_{\ell, \nu}(r) = \int_{\mathbb{S}^{d - 1}} \phi(r\theta) Y_{\ell, \nu}(\theta) \ud \mathrm{vol}_{\mathbb{S}^{d - 1}}$, and $\mathrm{vol}_{\mathbb{S}^{d - 1}}$ is the canonical surface measure on the sphere.

We define operator $H_\mu$ such that
\begin{equation*}
    (H_\mu\phi, \phi)_\mu = (D_{W}^2\mathcal{F}(\mu)\nabla \phi, \nabla \phi)_\mu
\end{equation*}
for all smooth functions $\phi$.
As steady state $\mu$ and kernel $W$ are radial symmetric (invariant under rotation), the operator $H_\mu$ commutes with rotational operators. Hence, there exists a sequence of radial operators $(H_{\mu}^{(\ell)})_{\ell = 1}^{\infty}$, and we can decompose the operator $H_\mu$ as follows,
\begin{equation}\label{5-10-b}
    (H_\mu\phi)(r\theta) = \sum_{\ell = 0}^{\infty} \sum_{\nu = 1}^{n_\ell}  (H_{\mu}^{(\ell)}\phi_{\ell, \nu})(r) Y_{\ell, \nu}(\theta).
\end{equation}
We define the following spherical harmonics sectors, which constitute orthogonal decomposition of the tangent space $T_{\mu}\mathscr{P}_2(\mathbb{R}^d)$.
\begin{definition}
We define the $\ell$-th spherical harmonics sector 
\begin{equation*}
    \mathscr{Y}_{\ell} = \overline{\big\{Z(x) = \nabla \big(\phi(x) Y_{\ell, \nu}(\theta)\big) \text{ where } \phi \text{ is smooth radial symmetric, and } 1 \le \nu \le n_{\ell}\big\}}_{L^{2}(\mathbb{R}^d,\, \mu)}.
\end{equation*}
where $x = r\theta$.
\end{definition}

\subsection{Ground-state}\label{sec:4-3-1}In view of the decomposition~\eqref{5-10-b}, it is sufficient to investigate the eigenvalues in each harmonics sector $\mathscr{Y}_{\ell}$.

For $\ell = 0$, spherical harmonics sector is given by
\begin{equation*}
    \mathscr{Y}_{\ell = 0} := \overline{\big\{Z(x) = \nabla \phi(x) \text{ where } \phi \text{ is smooth and radial symmetric}\big\}}_{L^{2}(\mathbb{R}^d,\, \mu)}.
\end{equation*}
This sector contributes to radial dilation. To characterize the eigenvalue, we leverage the following one-to-one correspondence between Wasserstein perturbation $Z$ and height perturbation $Z_H$, based on \cite[Equation (4.8)]{delgadino2022uniqueness} and Lemma~\ref{lemma_8_height}.
\begin{lemma}\label{lemma_10_corres}For $\mu$ obeying Theorem~\ref{proposition-3-steady}, and bounded smooth function $Z_{H} : [0, 1] \to \mathbb{R}$ with $Z_{H}(0) = 0$, there exists a height perturbation $\mu_{Z}(t)$ defined such that
\begin{equation*}
    H[\mu_{Z}(t)](s) = H[\mu](s) + t Z_{H}(s),
\end{equation*}
for small $t$. The vector field
\begin{equation*}
    Z(x) = -\frac{1}{d} \frac{Z_{H}(s)}{H[\mu](s)} x,
\end{equation*}
is smooth radial symmetric where $|x| = R[\varrho](s)$, and it corresponds to the metric derivative of $\mu_{Z}(t)$ at $t = 0$, or the initial velocity vector of the continuity equation in Lemma~\ref{lemma_8_height}.
\end{lemma}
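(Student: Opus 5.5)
The plan is to build $\mu_Z(t)$ directly from the perturbed height function, using the reconstruction formula for radial decreasing densities. Then I identify its velocity by computing the radial mass flux, reading the formula for $Z$ through the slice structure of Lemma~\ref{lemma:7} (slices $U_{R(s)}$ dilated about the origin).

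\emph{Step 1: admissibility.} Set $H_t := H[\mu] + tZ_H$ on $[0,1]$. Then $H_t(0)=0$ because $Z_H(0)=0$. By Theorem~\ref{proposition-3-steady}, $\mu$ is supported in $B_R$, so $H[\mu]'(s) = (\omega_d R[\mu](s)^d)^{-1} \ge (\omega_d R^d)^{-1} > 0$. Since $Z_H'$ is bounded, $H_t' > 0$ for $|t|$ small, so $H_t$ is $C^1$ and increasing. Strict convexity is less clear-cut, and this is where I expect the main obstacle. I would first use the strict radial decrease of $\mu$ to get $H[\mu]''>0$ on compact subsets of $(0,1)$. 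Near the endpoints, I would use the non-degeneracy $\mu \sim \delta^{1/(m-1)}$ at the free boundary and the smoothness of $\mu$ at the origin to get quantitative lower bounds on $H[\mu]''$. If $Z_H''$ is not controlled there, I would restrict the class of $Z_H$ (e.g.\ compactly supported derivative in $(0,1)$), which suffices for density in the sector $\mathscr{Y}_0$. Granting this, the reconstruction lemma gives a radial decreasing probability density
\begin{equation*}
    \mu_Z(t) = \int_0^1 \mathbf{1}\big[B_{R_t(s)}\big] H_t'(s)\ud s,\qquad R_t(s) = (\omega_d H_t'(s))^{-1/d}.
\end{equation*}

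\emph{Step 2: velocity.} Differentiating $R_t$ at $t=0$ gives
\begin{equation*}
    \partial_t R_t(s)\big|_{t=0} = -\frac1d \frac{Z_H'(s)}{H[\mu]'(s)} R[\mu](s).
\end{equation*}
So the level-$s$ slice is dilated with radial velocity $-\frac1d (Z_H'/H[\mu]')\,x$ at its boundary $|x|=R[\mu](s)$. This is the field in the statement, with $Z_H/H[\mu]$ read as the ratio of derivatives, as this computation forces. To confirm that this is the genuine velocity of the continuity equation in Lemma~\ref{lemma_8_height}, I would compute the enclosed mass $m(r,t) = \mu_Z(t)[B_r]$ from the slice representation. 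I would then check $\partial_t m(r,t)|_{t=0} = -d\omega_d r^{d-1}\mu(r)\,Z_{\mathrm{ra}}(r)$, which is exactly the radial continuity equation. The check uses the change of variables $r = R[\mu](s)$, valid because $\mu$ is strictly decreasing, so $s\mapsto R[\mu](s)$ is a bijection onto $(0,\mathrm{radius}(\Omega))$. Radiality and the gradient structure are automatic: $Z = \nabla\phi$ for the radial primitive $\phi$ of $Z_{\mathrm{ra}}$, so $Z$ lies in $T_\mu\mathscr{P}_2$.

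\emph{Step 3: regularity.} Smoothness of $Z$ in the interior follows from that of $\mu$, hence of $H[\mu]$ and $R[\mu]$, and of $Z_H$. Boundedness in $L^2(\mu)$ follows from $H[\mu]'\ge c>0$. I expect the free-boundary behaviour, i.e.\ the convexity of $H_t$ near $s=0$ in Step 1, to be the only delicate point. I would handle it with the non-degeneracy in Theorem~\ref{proposition-3-steady}(3).
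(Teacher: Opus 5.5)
There is a genuine error in Step 2. The field you derive is not the velocity of $\mu_Z(t)$. The formula in the statement, with $Z_H(s)/H[\mu](s)$ read literally (no derivatives), is the correct one.

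Your quantity $\partial_t R_t(s)/R_t(s) = -\frac1d Z_H'(s)/H[\mu]'(s)$ is only the dilation rate of the single layer $U_{R[\mu](s)}$. A point $x$ with $|x|=R[\mu](s)$ lies in every layer $s'\in[0,s]$, because $R[\mu]$ is decreasing, and each of these layers dilates at its own rate. The Eulerian velocity is the flux-weighted average over these layers. The flux at $x$ is $\int_0^s H[\mu]'(s')\big(-\frac1d\frac{Z_H'(s')}{H[\mu]'(s')}\big)x\ud s' = -\frac1d Z_H(s)\,x$, using $Z_H(0)=0$. Dividing by $\mu(x)=H[\mu](s)$ gives $-\frac1d\frac{Z_H(s)}{H[\mu](s)}x$.

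The enclosed-mass check you planned would have exposed this.
\begin{itemize}
\item From $s=\int\min\{\mu_Z(t),H_t(s)\}\ud x$ one gets $m(R_t(s),t)=1-s+H_t(s)/H_t'(s)$.
\item Fix $r$ and let $s_t$ solve $R_t(s_t)=r$, so that $\omega_d r^d H_t'(s_t)=1$. Differentiating, the $\dot s_t$ terms cancel and $\partial_t m(r,0)=\omega_d r^d Z_H(s_r)$.
\item Hence $Z_{\mathrm{ra}}(r)=-\frac{r}{d}\frac{Z_H(s_r)}{H[\mu](s_r)}$, which is exactly the statement.
\item Your field would instead require $\partial_t m(r,0)=\omega_d r^d H[\mu](s_r)Z_H'(s_r)/H[\mu]'(s_r)$. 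This agrees with the true value only when $Z_H$ is a constant multiple of $H[\mu]$.
\end{itemize}
So the proposal, as written, reinterprets the lemma as a false formula and ``proves'' that.

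The rest of your plan is sound. Once Step 2 is corrected via the mass computation above, it gives a self-contained proof of what the paper only cites (Delgadino--Yan--Yao, Equation (4.8), together with Lemma~\ref{lemma_8_height}).

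Your worry about strict convexity of $H[\mu]+tZ_H$ near $s=0$ is legitimate, and the paper does not address it. One has $H[\mu]''(s)=d/(\omega_d^2R^{2d+1}|\mu'(R)|)$ with $R=R[\mu](s)$, which can degenerate at the free boundary. Restricting to $Z_H'$ compactly supported in $(0,1)$ is harmless for the density argument in $\mathscr{Y}_{\ell=0}$. For example, take $Z_H(s)=-dH[\mu](s)\zeta(R[\mu](s))$ with $\zeta$ smooth and compactly supported inside the support radius; this produces the field $\zeta(|x|)x$.

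In Step 3, the boundedness of the correct field near the free boundary comes from $Z_H(0)=H[\mu](0)=0$ and $H[\mu]'(0)>0$. Together these keep $Z_H/H[\mu]$ bounded as $s\to0$. This is where the hypothesis $Z_H(0)=0$ really matters, not only in ensuring $H_t(0)=0$.
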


By standard approximation on polar coordinates, one has the following.
\begin{lemma}\label{lemma_9_corres}The space of vector fields $\zeta(|x|)x$ where $\zeta$ is $C^{\infty}_c((0, \infty))$ is dense in the space of radial vector fields under $L^2(\mathbb{R}^d,\mu)$ topology, for radial symmetric and absolutely continuous $\mu$.
\end{lemma}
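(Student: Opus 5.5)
The plan is to reduce the statement to a one-dimensional density problem in weighted $L^2$ on the half-line, using polar coordinates.

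First, identify radial vector fields with scalar profiles. A radial vector field is one of the form $V(x) = v(|x|)\,x/|x|$, or equivalently $V(x) = \zeta(|x|)x$ with $\zeta(r) = v(r)/r$. Since $\mu$ is radial and absolutely continuous, write $\mu(x)\,\mathrm{d}x$ in polar form as $\mathrm{d}\mathrm{vol}_{\mathbb{S}^{d-1}}$ times the radial measure $\mathrm{d}m(r) = \mu_{\mathrm{ra}}(r) r^{d-1}\,\mathrm{d}r$, which is an absolutely continuous measure on $(0,\infty)$. Then
\begin{equation*}
\|V\|^2_{L^2(\mathbb{R}^d,\mu)} = d\omega_d \int_0^\infty |v(r)|^2 \,\mathrm{d}m(r).
\end{equation*}
So the map $V \mapsto v$ is, up to the constant $d\omega_d$, an isometry from the space of radial fields in $L^2(\mathbb{R}^d,\mu)$ onto $L^2((0,\infty), m)$. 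The fields $\zeta(|x|)x$ with $\zeta \in C_c^\infty((0,\infty))$ correspond exactly to profiles $v(r) = r\zeta(r)$. Since multiplication by $r$ is a bijection of $C_c^\infty((0,\infty))$ onto itself, these profiles are exactly $C_c^\infty((0,\infty))$.

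It therefore suffices to show that $C_c^\infty((0,\infty))$ is dense in $L^2((0,\infty),m)$. I would do this in three steps.
\begin{enumerate}
\item \emph{Removing $r=0$ and infinity.} The point $r=0$ carries no mass because $m$ is absolutely continuous. Given $v \in L^2(m)$, dominated convergence shows that $v\mathbf{1}_{[1/n,n]} \to v$ in $L^2(m)$.
\item \emph{Approximating on a compact interval.} Restricted to $[1/n,n]$, the measure $m$ is a finite Radon measure on a compact interval of $(0,\infty)$. Continuous functions compactly supported in the open interval $(1/(2n),2n)$ are dense in $L^2(m|_{[1/n,n]})$; this follows from Lusin's theorem, or from regularity of finite Borel measures together with Urysohn's lemma.
\item \emph{Smoothing.} Mollifying a continuous compactly supported function gives uniform convergence with supports staying inside a fixed compact subset of $(0,\infty)$. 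On such a set $m$ has finite mass, so uniform convergence implies $L^2(m)$ convergence.
\end{enumerate}
Combining the three approximations with the isometry gives the density claimed in the lemma.

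The only delicate point, and the main thing to check, is that $m$ is locally finite on $(0,\infty)$, so that uniform convergence controls the $L^2(m)$ norm. This holds because $\mu$ is a probability density: $\int_{\mathbb{R}^d}\mu\,\mathrm{d}x = d\omega_d\, m((0,\infty)) = 1$, so $m$ is in fact finite. In the application $\mu$ is compactly supported and continuous by Theorem~\ref{proposition-3-steady}, but the argument above does not need this. Note also that no behaviour of $\zeta$ near $r=0$ has to be controlled, since $\zeta$ is required to vanish there anyway.
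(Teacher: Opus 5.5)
Your proposal is correct and takes the same route the paper has in mind: the paper gives no details beyond ``standard approximation on polar coordinates,'' and your isometry $V \mapsto v$ onto $L^2\bigl((0,\infty), \mu_{\mathrm{ra}}(r) r^{d-1}\,\mathrm{d}r\bigr)$, which sends the fields $\zeta(|x|)x$ onto $C_c^\infty((0,\infty))$, followed by density of $C_c^\infty((0,\infty))$ in $L^2$ of a finite Borel measure on $(0,\infty)$, supplies exactly those details. One phrasing needs tightening: in Step 2, approximate by $g \in C_c((1/(2n),2n))$ in $L^2(m|_{(1/(2n),2n)})$ rather than in $L^2(m|_{[1/n,n]})$, so that the mass of $g$ outside $[1/n,n]$ is also controlled; alternatively, apply density of $C_c((0,\infty))$ in $L^2((0,\infty),m)$ directly, which merges Steps 1 and 2 into one.
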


Given bounded smooth $Z_H$ with $Z_H(0) = 0$, let $\mu_{Z}(t)$ be the height perturbation under $Z_H$ as that in Lemma~\ref{lemma_10_corres}. As $\mu$ is non-degenerate at origin and near support boundary (from Theorem~\ref{proposition-3-steady}), the radial direvative of $\mu$ is non-zero, and $H[\mu]'$ is positive and uniformly bounded below from zero. Hence we have (from Definition~\ref{def:6:radius})
\begin{equation*}
    \begin{aligned}
        R[\mu_Z](s) &= (\omega_d)^{-1/d} (H[\mu_Z]'(s))^{-\frac{1}{d}}\\
        &= (\omega_d)^{-1/d} (H[\mu]'(s) + tZ_H^\prime(s))^{-\frac{1}{d}} \le R[\mu](s) + t\varepsilon_{Z}
    \end{aligned}
\end{equation*}
for all bounded smooth $Z_{H}$, with a constant $\varepsilon_Z$ depending on $Z'_H$. It ensures that, when $t$ is small, $\mu_{Z}(t)$ remains in $B_R(0)$ where $B_R(0)$ is a ball containing the support of $\mu$ in its interior (We also refer the reader to \cite[Lemma 4.1]{delgadino2022uniqueness} for proving that $\mu_Z(t)$ is supported within $B_R$).

By applying the transport inequality of radial profiles (Proposition~\ref{prop_10-raidal}) within the ball $B_R$, we are able to get:
\begin{equation}\label{4-3-f}
    \frac{\mathcal{F}(\mu_{Z}(t)) - \mathcal{F}(\mu)}{\tfrac{1}{2}t^2} \ge \Lambda_{R} \frac{W_{2}(\mu_{Z}(t), \mu)^{2}}{t^2}.
\end{equation}
Building on Lemma~\ref{lemma_10_corres}, let $Z$ be the metric derivative of $\mu_{Z}(t)$ in $\mathscr{P}_{2}(\mathbb{R}^d)$, we can take limit in~\eqref{4-3-f} to get
\begin{equation}\label{4-4-f}
    \big(Z, D_W^2\mathcal{F}(\mu) Z\big)_{\mu} \ge \Lambda_{R} \|Z\|^2_{L^2(\mathbb{R}^d,\, \mu)}.
\end{equation}
Since $Z_H$ is arbitrary, Lemma~\ref{lemma_9_corres} indicates the variational characterization of eigenvalue~\eqref{4-4-f} can be made dense in $\mathscr{Y}_{\ell = 0}$. Hence, the ground state eigenvalue is located within $[\Lambda_R, \infty)$.

\subsection{Special first-order}\label{sec:4-3-2}
For $\ell = 1$, spherical harmonic sector is given by
\begin{equation*}
    \mathscr{Y}_{\ell = 1} := \overline{\big\{Z(x) = \nabla (\phi(x) \theta_{\nu}) \text{ where } \phi \text{ is smooth radial symmetric, and } 1 \le \nu \le d\big\}}_{L^{2}(\mathbb{R}^d,\, \mu)},
\end{equation*}
where $\theta_{\nu}$ is the $\nu$-th coordinate function of $\mathbb{R}^d$ restricted to $\mathbb{S}^{d-1}$. In the case $\phi(x) = c_\nu$ for some constant $c_\nu$, the vectors are $Z(x) =  c_\nu e_{\nu}$ for $1 \le \nu \le d$. They correspond to translation modes in coordinate directions. By the invariance of the aggregation-diffusion energy~\eqref{0} under translation,
\begin{equation*}
    \mathcal{F}((\mathrm{id} + t e_{\nu}) \# \mu) = \mathcal{F}(\mu) \text{ for all } t > 0.
\end{equation*}
Hence, for $Z(x) = \sum_{\nu = 1}^{d} c_\nu \nabla \theta_{\nu}$ with real coefficients $c_\nu$,
\begin{equation*}
    \lim_{t \to 0} \frac{\mathcal{F}((\mathrm{id} + tZ) \# \mu) - \mathcal{F}(\mu)}{t^2} = 0 = 0 \times \|Z\|^2_{L^2(\mathbb{R}^d,\, \mu)}.
\end{equation*}
It implies that the Wasserstein Hessian $D^{2}_W\mathcal{F}(\mu)$ has a kernel of dimension at least $d$. We will next show that this space spanned by coordinate vectors are exactly the kernel.

\subsection{Remainders from reduction}\label{sec:4-3-3}
For other general perturbations $X$, i.e., those orthogonal to $Z(x) = x_\nu$ for $\nu = 1, \cdots, d$ and $\mathscr{Y}_{\ell = 0}$, we use the reduction inequality in Proposition~\ref{eq:nonradial-stability} to estimate the spectrum.

We establish the following geometry lemma.
\begin{lemma}\label{lemma_18}Given smooth vector fields $Z$ such that
\begin{equation*}
    Z \perp \mathscr{Y}_{\ell = 0} \oplus \mathrm{span}_{1 \le \nu \le d} (\nabla x_{\nu}),
\end{equation*}
in the tangent space $T_\mu \mathscr{P}_2(\mathbb{R}^d)$. Let $\omega(t) = (\mathrm{id} + tX) \# \mu$, we have
\begin{equation*}
    \lim_{t \to 0} \frac{\inf_{a}W_{2}(\tau_{a}\omega(t), \omega(t)_{\sharp})^2}{t^2} = \|Z\|_{L^{2}(\mathbb{R}^d,\, \mu)}^{2},
\end{equation*}
where $\omega(t)_{\sharp}$ is the radial rearrangement of $\omega(t)$.
\end{lemma}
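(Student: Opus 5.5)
We take $X=Z$ in the statement, so $\omega(t)=(\mathrm{id}+tZ)\#\mu$. The plan is to prove matching upper and lower bounds on the rescaled quantity
\[
\frac{1}{t^2}\inf_a W_2(\tau_a\omega(t),\omega(t)_\sharp)^2 .
\]
Both bounds come from identifying the first-order behaviour of the rearrangement $\omega(t)_\sharp$ and of the optimal translation.

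\textbf{Step 1: first-order expansion of the rearrangement.} Since $\mu$ is radially decreasing, we have $\mu_\sharp=\mu$. The curve $t\mapsto \omega(t)_\sharp$ is a curve of radial decreasing densities starting at $\mu$. We claim its velocity at $t=0$ is the $L^2(\mu)$-orthogonal projection $P_0Z$ of $Z$ onto $\mathscr{Y}_{\ell=0}$; under our hypothesis $P_0Z=0$.

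To see this heuristically, write $\omega(t)=\mu-t\,\mathrm{div}(\mu Z)+O(t^2)$. The rearrangement only sees the distribution function $t\mapsto |\{\omega(t)>h\}|$. To first order, that distribution function is governed by the spherical average of $\mathrm{div}(\mu Z)$, i.e.\ by its $\ell=0$ component. This component vanishes exactly when $Z\perp\mathscr{Y}_{\ell=0}$, because the $\ell=0$ part of $\mathrm{div}(\mu Z)$ is $\mathrm{div}(\mu P_0Z)$.

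Rigorously, we will show $W_2(\omega(t)_\sharp,\mu)=o(t)$. We use the height function: $H[\omega(t)_\sharp]=H[\omega(t)]$, and $H[\omega(t)]-H[\mu]=o(t)$ in a $C^1$ sense away from the endpoints, using the non-degeneracy of $\mu$ from Theorem~\ref{proposition-3-steady}. We then apply the slice-wise bound from the proof of Lemma~\ref{lemma:7}, namely $W_2^2\le \frac{d}{d+2}\int_0^1 (R[\omega]-R[\mu])^2\,\mathrm{d}s$.

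\textbf{Step 2: upper bound.} Take $a=0$. By the triangle inequality,
\[
W_2(\omega(t),\omega(t)_\sharp)\le W_2(\omega(t),\mu)+W_2(\mu,\omega(t)_\sharp)\le t\|Z\|_{L^2(\mu)}+o(t),
\]
using the coupling $(\mathrm{id},\mathrm{id}+tZ)\#\mu$. This gives $\limsup\le \|Z\|^2$.

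\textbf{Step 3: lower bound.} By Step 1, $\inf_a W_2(\tau_a\omega(t),\omega(t)_\sharp)\ge \inf_a W_2(\tau_a\omega(t),\mu)-o(t)$. For small $t$ the optimal shift satisfies $a=O(t)$, since the barycenter of $\omega(t)$ is $t\int Z\,\mathrm{d}\mu$, which is zero by orthogonality to $\nabla x_\nu$. So write $a=tb$.

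Since $Z$ is a smooth gradient field, the map $\mathrm{id}+tZ+tb$ is the gradient of a convex function for small $t$. By Lemma~\ref{lemma-3-brenier} it is therefore the optimal transport map from $\mu$ to $\tau_{tb}\omega(t)$, and
\[
W_2(\tau_{tb}\omega(t),\mu)^2=t^2\|Z+b\|_{L^2(\mu)}^2 .
\]
Using $Z\perp \nabla x_\nu=e_\nu$, we get $\|Z+b\|^2=\|Z\|^2+|b|^2\ge\|Z\|^2$. This yields $\liminf\ge\|Z\|^2$.

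To make this uniform in $a$, we first bound $|b|$ a priori. From the barycenter, $W_2(\tau_a\omega(t),\mu)\ge |a-\text{barycenter shift}|$, so large $|b|$ is never competitive.

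\textbf{Main obstacle.} The main obstacle is Step 1, the $o(t)$ estimate for $W_2(\omega(t)_\sharp,\mu)$. It requires controlling the height function of a non-radial perturbation near the free boundary, where $\mu\sim\delta^{1/(m-1)}$, and near the maximum at the origin, where level sets degenerate. My first attempt would be to express $|\{\omega(t)>h\}|$ through the change of variables $\mathrm{id}+tZ$ with Jacobian $1+t\,\mathrm{div}Z+O(t^2)$. The first-order variation of the level-set measure is then an integral of $\mathrm{div}(\mu Z)$ over $\{\mu>h\}$, which equals a flux of $\mu Z$ through the sphere $\{\mu=h\}$. This flux pairs $Z\cdot\theta$ against the constant function on that sphere, and it vanishes because $Z\perp\mathscr{Y}_{\ell=0}$ (testing against radial $\phi$ localized near that radius). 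Uniform second-order remainders then need the boundedness of $\nabla Z$ and the lower bound on $|\nabla\mu|$ in the interior, with a cutoff argument at the two ends of $[0,1]$.
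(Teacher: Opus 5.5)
Your proposal is correct and has the same skeleton as the paper's proof. The vanishing barycenter $\int Z\ud\mu=0$ neutralizes the translation. The key estimate is $W_2(\omega(t)_\sharp,\mu)=o(t)$, which comes from the vanishing first variation of the level-set volumes when $Z\perp\mathscr{Y}_{\ell=0}$. The conclusion then follows from the triangle inequality and $W_2(\omega(t),\mu)=t\|Z\|_{L^2(\mathbb{R}^d,\,\mu)}$.

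The implementation differs in two places.
\begin{itemize}
\item \emph{Translation.} The paper proves the exact identity $\inf_a W_2(\tau_a\omega,\omega_\sharp)^2=W_2(\omega,\omega_\sharp)^2-\bigl|\int x\ud\omega-\int x\ud\omega_\sharp\bigr|^2$ by shifting the Brenier potential from $\omega_\sharp$, so $a=0$ is optimal. You instead sandwich the infimum and evaluate $\inf_a W_2(\tau_a\omega(t),\mu)^2=t^2\|Z\|^2_{L^2(\mathbb{R}^d,\,\mu)}$ from the explicit map $\mathrm{id}+tZ+a$. This is slightly more elementary. Your uniformity-in-$a$ remark is unnecessary, since convexity of $\frac12|x|^2+t\psi+a\cdot x$ (with $Z=\nabla\psi$) does not depend on $a$.
\item \emph{The $o(t)$ estimate.} The paper computes the normal velocity $\xi_\alpha=\mathrm{div}(\mu Z)/\mu'$ of the level sets and shows its spherical mean vanishes. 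It then passes directly from $\partial_t\omega(t)_\sharp|_{t=0}=0$ to $W_2(\omega(t)_\sharp,\mu)=o(t)$ without a remainder estimate. You go through $C^1$ closeness of height functions and the slice-wise coupling bound $W_2^2\le\frac{d}{d+2}\int_0^1(R[\omega(t)_\sharp]-R[\mu])^2\ud s$ from the proof of Lemma~\ref{lemma:7}. That is a more concrete way to justify this passage, at the price of the endpoint estimates you already flag.
\end{itemize}

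One small correction to your sketch. The quantity $\int_{\{\mu>h\}}\mathrm{div}(\mu Z)\ud x$ is, up to sign, the first variation of $\int(\omega(t)-h)_+\ud x$, which controls $H$ itself. It is not the first variation of $|\{\omega(t)>h\}|$. Via your change of variables, $|\{\omega(t)>h\}|=\int_{\{\mu>hJ_t\}}J_t\ud x$ also picks up a moving-boundary term. You need this quantity for $H'$ and hence for $R$. Its first variation is $\mu'(R)^{-1}\int_{\partial B_R}\mathrm{div}(\mu Z)\,\mathrm{d}S$, which is the paper's $\xi_\alpha$ integrated over the sphere, i.e.\ the radial derivative of the flux. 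It still vanishes, because your orthogonality hypothesis makes the flux of $\mu Z$ zero through every sphere, not just one. So the conclusion stands once you state it this way.
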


The proof will be given after spectral characterization. In particular, from reduction estimate in Proposition~\ref{eq:nonradial-stability}, we have
\begin{equation*}
    \mathcal{F}(\omega) - \mathcal{F}(\omega_{\sharp}) \ge \frac{\Lambda_{R}}{2} \inf_{a \in \mathbb{R}^{d}} W_2(\tau_{a}\omega, \omega_{\sharp})^2.
\end{equation*}
As $\mu$ is the minimizer, we have $\mathcal{F}(\omega(t)_{\sharp}) \ge \mathcal{F}(\mu)$, and therefore, we can deduce that
\begin{equation*}
    \mathcal{F}(\omega(t)) - \mathcal{F}(\mu) = \underbrace{\mathcal{F}(\omega(t)) - \mathcal{F}(\omega(t)_{\sharp})}_{\text{reduction}} + \underbrace{\mathcal{F}(\omega(t)_{\sharp}) - \mathcal{F}(\mu)}_{\text{non-negative}} \ge \inf_{a \in \mathbb{R}^d} W_{2}(\tau_a\omega(t), \omega(t)_{\sharp})^{2},
\end{equation*}
By Lemma~\ref{lemma_18}, we can take the limit $t \to 0$ to get
\begin{equation*}
    \begin{aligned}
        \big(Z, D^{2}_{W}\mathcal{F}(\mu) Z\big)_{L^{2}(\mathbb{R}^{d},\, \mu)} &= \lim_{t \to 0} \frac{\mathcal{F}(\omega(t)) - \mathcal{F}(\mu)}{\frac{1}{2}t^{2}}\\
        &\ge \Lambda_{R} \lim_{t \to 0} \frac{\inf_{a} W_{2}(\tau_a\omega(t), \omega(t)_{\sharp})^{2}}{t^{2}} = \Lambda_{R}\|X\|^{2}_{L^{2}(\mathbb{R}^{d},\, \mu)},
    \end{aligned}
\end{equation*}
which shows that the rest of the spectrum are located in $[\Lambda_{R}, \infty)$.

\begin{proof}[Proof of Lemma~\ref{lemma_18}]We first fix the center using the orthogonality with respect to first order spherical harmonics, and then prove the lemma based on the orthogonality with respect to radial variation modes.
~\\
\textit{Fixing the center}. We let $\phi_{\omega_{\sharp} \to \omega}$ be the Brenier potential from $\omega_{\sharp}$ to $\omega$ by Lemma~\ref{lemma-3-brenier}. Then, for any given displacement $a$ in $\mathbb{R}^d$, $\phi_{\omega_{\sharp} \to \omega}(x) + a\cdot x$ is a convex function with
\begin{equation*}
    \nabla \big(\phi_{\omega_{\sharp} \to \omega}(x) + a\cdot x\big) \# \omega_\sharp = \tau_{a} \omega.
\end{equation*}
It means that $\phi_{\omega_{\sharp} \to \omega}(x) + a\cdot x$ is the Brenier potential from $\omega_{\sharp}$ to $\tau_{a}\omega$. By using
\begin{equation*}
    |\nabla \phi + a - x|^2 = |\nabla \phi - x|^2 + 2a \cdot(\nabla \phi - x) + |a|^2,
\end{equation*}
one can get
\begin{equation*}
    \begin{aligned}
        W_{2}(\tau_{a} \omega(t), \omega(t)_{\sharp})^2 &= \int_{\mathbb{R}^d} |\nabla \phi_{\omega_{\sharp} \to \omega}(x) + a - x|^2 \ud \omega(t)_{\sharp}\\
        &= W_{2}(\omega(t), \omega(t)_{\sharp})^2 + 2a \bigg[\int_{\mathbb{R}^d} x \ud \omega(t) - \int_{\mathbb{R}^d} x \ud \omega(t)_{\sharp}\bigg] + |a|^2.
    \end{aligned}
\end{equation*}
By optimizing the vector $a$, one can get
\begin{equation}\label{5-10-a}
    \inf_{a\in\mathbb{R}^d} W_{2}(\tau_{a} \omega(t), \omega(t)_{\sharp})^2 = W_{2}(\omega(t), \omega(t)_{\sharp})^2 - \bigg[\int_{\mathbb{R}^d} x \ud \omega(t) - \int_{\mathbb{R}^d} x \ud \omega(t)_{\sharp}\bigg]^2
\end{equation}
Since $\omega(t)_{\sharp}$ is radial symmetric, we have $\int_{\mathbb{R}^d} x \ud \omega(t)_{\sharp} = 0$. On the other hand, by definition $\omega(t) = (\mathrm{id} + tZ)\# \mu$, we can then invoke the push-forward to get
\begin{equation*}
    \int_{\mathbb{R}^d} x \ud \omega(t) = \int_{\mathbb{R}^d} x + tZ(x) \ud \mu = 0
\end{equation*}
because $\mu$ is radial symmetric and $\int_{\mathbb{R}^d} Z \ud \mu = 0$ due to its orthogonality to $\mathrm{span}_{1 \le \nu \le d} (\nabla x_{\nu})$. Hence
\begin{equation*}
    \inf_{a\in\mathbb{R}^d} W_{2}(\tau_{a} \omega(t), \omega(t)_{\sharp})^2 = W_{2}(\omega(t), \omega(t)_{\sharp})^2.
\end{equation*}
according to~\eqref{5-10-a}.
~\\
\textit{Radial variation modes}. In the following, we show that
\begin{equation*}
    W_{2}(\omega(t)_{\sharp}, \mu) = o(t).
\end{equation*}
According to \cite[Lemma 2.10]{ambrosio2007gradient} and \cite{Antoine2018}, one has
\begin{equation*}
    \omega(t) = (\mathrm{id} + tZ)\# \mu = \mu - t \times \mathrm{div}(\mu Z) + o(t).
\end{equation*}
In particular, one can use the continuity equation as an infinitesimal push-forward \cite[Proposition 8.1.8]{ambrosio2008gradient}, or linearize the Jacobi formula of push-forward, to obtain this first-order asymptotics.

On the other hand, fixing a height level $\alpha$, let $R(\alpha)$ be the radius of the level set ball of $\mu$. Since $\mu$ is strictly radial decreasing, the $\alpha$-level set $\{x : \omega(x, t) = \alpha\}$ can be parameterized over a closed smooth curve
\begin{equation*}
    \zeta_{t, \alpha}(\theta) = R(\alpha)\theta + t\xi_\alpha(\theta) \theta + o(t)
\end{equation*}
when $t \le t_{\alpha}$ where $t_\alpha$ is a small real number depending on the height level $\alpha$, and $\xi_\alpha$ is a function on $\mathbb{S}^{d - 1}$, corresponding to the normal velocity of level set. As $\omega(\zeta_{t, \alpha}(\theta)) = \alpha$ is constant, differentiating it in time, we have
\begin{equation*}
    \begin{aligned}
        0 = \frac{\mathrm{d}}{\mathrm{d}t}\Big|_{t = 0} \omega(\zeta_{t, \alpha}(\theta), t) &= \frac{\mathrm{d}}{\mathrm{d}t}\Big|_{t = 0} \omega(R(\alpha) \theta, t) + \mu^{\prime}(R(\alpha) \theta) \frac{\mathrm{d}}{\mathrm{d}t}\Big|_{t = 0}\zeta_{t, \alpha}(\theta)\\
        &= -\mathrm{div}(\mu Z)(R(\alpha) \theta) + \mu^{\prime}(R(\alpha) \theta)\xi_{\alpha}(\theta),
    \end{aligned}
\end{equation*}
where $\mu^{\prime}$ is the derivative in the radial direction. Hence the variation of the level set (on the regular height level of $\mu$ where $\mu' \neq 0$) is
\begin{equation}\label{5-5-level}
    \xi_{\alpha}(\theta) = \frac{\mathrm{div}(\mu(x) Z(x))}{\mu^{\prime}(x)} = Z(x) \cdot \theta + \frac{\alpha}{\mu^{\prime}(x)} \mathrm{div}Z(x),\quad \text{ where } x = R(\alpha) \theta.
\end{equation}
The first term represents transport by $Z$, and the second represents the effect of local compression or expansion of the density.

By using the first variation formula of volume (see for example \cite[Proposition 1.19]{ritore2023isoperimetric}, where the variation of volume along vector fields is the integration of normal part of the vector fields on the boundary), we have
\begin{equation*}
    \frac{\mathrm{d}}{\mathrm{d}t}\Big|_{t = 0} \big|\{x : \omega(x, t) \ge \alpha\}\big| = R(\alpha)^{d - 1} \int_{\mathbb{S}^{d - 1}} \xi_{\alpha}(\theta) \ud \mathrm{vol}_{\mathbb{S}^{d - 1}}.
\end{equation*}
Here $|\cdot|$ is the Lebesgue measure, and the factor $R(\alpha)^{d - 1}$ comes from the scaling $\partial B_{R(\alpha)}$ into $\mathbb{S}^{d - 1}$. Similar method also applies to the level set variation of rearranged density $\omega_\sharp$, where we can get
\begin{equation*}
    \frac{\mathrm{d}}{\mathrm{d}t}\Big|_{t = 0} \big|\{x : \omega(x, t)_{\sharp} > \alpha\}\big| = R(\alpha)^{d - 1} \int_{\mathbb{S}^{d - 1}} \xi_{\alpha, \sharp}(\theta) \ud \mathrm{vol}_{\mathbb{S}^{d - 1}},
\end{equation*}
where $\xi_{\alpha, \sharp}$ is a constant by the radial symmetry of $\omega(t)_{\sharp}$. From the definition of radial rearrangement in Section~\ref{sec:2-2-rearrange}, we have
\begin{equation*}
    \frac{\mathrm{d}}{\mathrm{d}t}\Big|_{t = 0} \big|\{x : \omega(x, t)_{\sharp} \ge \alpha\}\big| = \frac{\mathrm{d}}{\mathrm{d}t}\Big|_{t = 0} \big|\{x : \omega(x, t) \ge \alpha\}\big|.
\end{equation*}
It implies that
\begin{equation*}
    \xi_{\alpha, \sharp} = \frac{1}{d\omega_d}\int_{\mathbb{S}^{d - 1}} \xi_\alpha(\theta) \ud \mathrm{vol}_{\mathbb{S}^{d - 1}},
\end{equation*}
where $d\omega_d$ is the surface area of $\mathbb{S}^{d - 1}$.

By using divergence formula in spherical coordinates (see for example, \cite[Chapter 1]{dai2013approximation})
\begin{equation*}
    \mathrm{div}(\mu Z) = \frac1{r^{d-1}}\partial_r(r^{d-1}\mu(r) Z_\mathrm{ra}) + \frac{\mu(r)}{r}\mathrm{div}_{\mathbb S^{d-1}}Z_\mathrm{tan}.
\end{equation*}
Here $Z_\mathrm{ra} = Z \cdot \theta$ and $Z_\mathrm{tan} = Z - Z_\mathrm{ra}$ are the normal and tangential parts of $Z$. From the definition of $\xi_{\alpha}$ in~\eqref{5-5-level}, we have
\begin{equation*}
    \begin{aligned}
        \int_{\mathbb{S}^{d - 1}} \xi_{\alpha}(\theta) \ud \mathrm{vol}_{\mathbb{S}^{d - 1}} &= \int_{\mathbb{S}^{d - 1}} \frac{\mathrm{div}(\mu Z)(R(\alpha)\theta)}{\mu^{\prime}(R(\alpha)\theta)} \ud \mathrm{vol}_{\mathbb{S}^{d - 1}}\\
        &= \frac{1}{R(\alpha)^{d - 1}\mu'(R(\alpha))} \partial_r\bigg[R(\alpha)^{d - 1} \mu(R(\alpha)) \int_{\mathbb{S}^{d - 1}} Z_{\mathrm{ra}}(R(\alpha)\theta) \ud \theta \bigg]\\
        &\quad + \frac{\mu(R(\alpha))}{R(\alpha)} \int_{\mathbb{S}^{d - 1}}\mathrm{div}_{\mathbb S^{d-1}}Z_\mathrm{tan} \ud \mathrm{vol}_{\mathbb{S}^{d - 1}}.
    \end{aligned}
\end{equation*}
As $Z$ is orthogonal to $\mathscr{Y}_{\ell = 0}$, $Z$ contains no zeroth order spherical harmonics, we have
\begin{equation*}
    \int_{\mathbb{S}^{d - 1}} Z_\mathrm{ra}(R(\alpha)\theta) \ud \theta = 0.
\end{equation*}
On the other hand, the vector field $Z_\mathrm{tan}$ contributes to tangential motions, which are reparameterization of the sphere. By the divergence theorem on the sphere, we have
\begin{equation*}
    \int_{\mathbb{S}^{d - 1}} \operatorname{div}_{\mathbb S^{d-1}}Z_\mathrm{tan}(R(\alpha)\theta) \ud \theta = 0.
\end{equation*}
By combining the above two facts, we are able to get
\begin{equation*}
    \xi_{\alpha, \sharp} = \frac{1}{d\omega_d}\int_{\mathbb{S}^{d - 1}} \xi_\alpha(\theta) \ud \mathrm{vol}_{\mathbb{S}^{d - 1}} = 0.
\end{equation*}
As $\omega_\sharp$ is radial symmetric and its first-order radial variation vanishes at $t = 0$, we have
\begin{equation*}
    \frac{\partial \omega(t)_{\sharp}}{\partial t}\Big|_{t = 0} = 0.
\end{equation*}
Therefore, the associated metric gradient of $\omega(t)_\sharp$ expressed in terms of the velocity vector in the continuity equation also vanishes identically in the first order at $t = 0$, which means that
\begin{equation*}
    |W_{2}(\omega(t), \omega(t)_{\sharp}) - W_{2}(\omega(t), \mu)| \le W_{2}(\omega(t)_{\sharp}, \mu) = o(t).
\end{equation*}
Taking the limit $t \to 0$, we can get
\begin{equation*}
    \lim_{t \to 0} \frac{\inf_{a} W_{2}(\tau_{a}\omega(t), \omega(t)_{\sharp})^2}{t^2} = \lim_{t \to 0}  \frac{W_{2}(\omega(t), \omega(t)_{\sharp})^2}{t^2} = \lim_{t \to 0} \frac{W_{2}(\omega(t), \mu)^2}{t^2} = \|Z\|_{L^{2}(\mathbb{R}^d,\, \mu)}^{2},
\end{equation*}
which concludes the proof.
\end{proof}

\subsection{Functional analysis lemmas}\label{sec:4-3-4}
In this part, we prove Lemmas~\ref{lemma-17-iso} and \ref{lemma-18-dense}.
\begin{proof}[Proof of Lemma~\ref{lemma-17-iso}]
For the gradient operator $\nabla$, one has
\begin{enumerate}
    \item (well-defined) If $\phi_0$ and $\phi_1$ are constant functions, then $\nabla (\phi_0 - \phi_1) = 0$, which means that $\nabla$ is well-defined on the quotient space.
    \item (injective) Since $\Omega$ is connected, then $\nabla \phi = 0$ implies that $\phi$ is constant almost everywhere.
    \item (surjective) By definition, $\nabla H^{1}(\Omega, \mu)$ is the space of $L^2$ gradient vector fields.
\end{enumerate}
These criteria show that $\nabla$ is a linear space isomorphism from $H^{1}(\mathbb{R}^d, \mu)/\mathbb{R}$ to $T_{\mu}\mathscr{P}_{2}(\mathbb{R}^d)$. We next prove that $\nabla$ is a Hilbert space isomorphism by showing
\begin{equation}\label{a-1-equiv}
    \|\nabla \phi\|_{\mu} \le \|\phi\|_{H^{1}(\Omega, \,\mu)/\mathbb{R}} \lesssim \|\nabla \phi\|_{\mu},
\end{equation}
where $\|\nabla \phi\|_{\mu}$ is the Otto metric on the tangent space, and $\|\phi\|_{H^{1}(\Omega, \,\mu)/\mathbb{R}}$ is the quotient space norm in~\eqref{4-5-qo}. Since $\mu$ is non-degenerate at support boundary from Theorem~\ref{proposition-3-steady}
\begin{equation*}
    \mu(x) \sim \delta(x, \mathbb{R} - \Omega)^{\frac{1}{m - 1}}.
\end{equation*}
One can use the Hardy-Poincar\'e inequality in \cite[Lemma 3 in Appendix]{seis2014long} and $\mu \le \|\mu\|_{L^{\infty}}^{m - 1} \mu^{2 - m}$ to show that
\begin{equation*}
    \int_{\Omega} |\phi - (\phi)_{\mu}|^2 \ud\mu \lesssim \int_{\Omega} |\phi - (\phi)_{\mu}|^2 \ud \mu^{2 - m} \lesssim \int_{\Omega} |\nabla \phi|^2 \ud \mu.
\end{equation*}
It then gives the norm equivalence as stated in~\eqref{a-1-equiv}.
\end{proof}

According to Seis \cite[Appendix]{seis2014long}, we have the following density based on the interior positivity of the weight $\mu$.
\begin{lemma}\label{lemma-13-density}For any $\phi$ in $H^{1}(\Omega, \mu)$, there exists a sequence of functions $(\phi_{\ell})$ in $C^{\infty}(\Omega)$ such that $\|\phi_{\ell} - \phi\|_{H^{1}(\Omega, \,\mu)} \to 0$ as $\ell \to \infty$.
\end{lemma}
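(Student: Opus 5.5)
The plan is the standard localization–mollification argument for weighted Sobolev spaces, made to work by the fact that the weight is harmless away from $\partial\Omega$. By Theorem~\ref{proposition-3-steady}, $\Omega$ is a ball $B_{R_0}(0)$, and $\mu$ is continuous and positive in the interior, with $\mu(x)\sim \delta(x,\mathbb{R}^d-\Omega)^{1/(m-1)}$ near the boundary. Consequently, on every compact $K\Subset\Omega$ the weight is bounded above and below. There the space $H^1(K,\mu)$ coincides with the unweighted $H^1(K)$, with equivalent norms.

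I also note that any $\phi\in H^1(\Omega,\mu)$ has $\phi-(\phi)_\mu\in L^2(\Omega,\mu)$. This comes from the Hardy–Poincar\'e estimate used in the proof of Lemma~\ref{lemma-17-iso}. So the full $H^1(\Omega,\mu)$ norm is finite and controlled by $\|\nabla\phi\|_{L^2(\mu)}$ modulo constants, and constants are smooth anyway.

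\textbf{Step 1: dilation toward the interior.} I will exploit the radial (star-shaped) geometry of $\Omega=B_{R_0}$. For $\lambda>1$, define $\phi_\lambda(x):=\phi(x/\lambda)$ on $B_{\lambda R_0}$. Its restriction to $\Omega$ then only samples $\phi$ on the compact set $\overline{B_{R_0/\lambda}}\Subset\Omega$. I will show $\phi_\lambda\to\phi$ in $H^1(\Omega,\mu)$ as $\lambda\downarrow 1$.

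Changing variables, $\int_\Omega|\nabla\phi_\lambda|^2\,d\mu$ equals $\lambda^{d-2}\int_{B_{R_0/\lambda}}|\nabla\phi(y)|^2\mu(\lambda y)\,dy$. Since $\mu$ is radially decreasing, $\mu(\lambda y)\le\mu(y)$. This gives a uniform domination by $\lambda^{d-2}\int|\nabla\phi|^2d\mu$. Convergence then follows in the usual way: approximate $\nabla\phi$ in $L^2(\mu)$ by a continuous compactly supported field $G$, for which dilation converges trivially, and control the error uniformly by the domination just proved. The $L^2(\mu)$ part of the norm is handled the same way, or via the Poincar\'e inequality after subtracting means.

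\textbf{Step 2: mollification.} For fixed $\lambda>1$, $\phi_\lambda\in H^1(B_{R_1})$ (unweighted) for some $R_1>R_0$ sufficiently close to $\lambda R_0$, because $\mu$ is bounded below on $B_{R_0 R_1/(\lambda R_0)}\Subset\Omega$. The standard mollification $\phi_\lambda*\eta_\varepsilon$ is $C^\infty$ on a neighborhood of $\overline\Omega$ and converges to $\phi_\lambda$ in unweighted $H^1(\Omega)$. Since $\mu\le\|\mu\|_{L^\infty}$, this convergence also holds in $H^1(\Omega,\mu)$. A diagonal choice $\lambda_\ell\downarrow1$, $\varepsilon_\ell\downarrow0$ produces the sequence $\phi_\ell\in C^\infty(\Omega)$; these are in fact smooth up to the boundary.

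\textbf{Main obstacle.} The delicate point is Step 1, specifically the domination $\mu(\lambda y)\le\mu(y)$ and the continuity of dilations in $L^2(\mu)$, given that $\mu$ degenerates at $\partial\Omega$. The strict radial decrease from Theorem~\ref{proposition-3-steady} is exactly what makes this work. Without it, I would fall back on Seis' argument: a partition of unity near the boundary, flattening it, and translating inward, using the comparability $\mu\sim\delta^{1/(m-1)}$ so that inward shifts change the weight by at most a bounded factor.
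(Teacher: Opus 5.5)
Your proof is correct, and it takes a genuinely different route from the paper. The paper gives no argument of its own: it cites Seis' appendix and attributes the density to the interior positivity of $\mu$. That points to a Meyers--Serrin type localization: on each compact $K\subset\Omega$ the weight is bounded above and below, so one mollifies piecewise along a partition of unity subordinate to an exhaustion of $\Omega$, with scales shrinking toward $\partial\Omega$.

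That route needs only local two-sided bounds on $\mu$ and works for arbitrary domains and locally comparable weights. It yields exactly what is stated, namely approximants in $C^\infty(\Omega)$, with no control at the boundary.

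Your dilation-then-mollify argument instead uses the global structure from Theorem~\ref{proposition-3-steady}. Since $\Omega$ is a ball and $\mu$ is radially nonincreasing (strict decrease is not needed), the domination $\mu(\lambda y)\le\mu(y)$ makes $F\mapsto\lambda^{-1}F(\cdot/\lambda)$ bounded on $L^2(\Omega,\mu)$ uniformly for $\lambda\in(1,2]$. The approximation-by-$C_c(\Omega)$ argument then goes through. Your approach buys a stronger conclusion: the approximants are smooth on a neighborhood of $\overline{\Omega}$, hence, after a cutoff, restrictions of $C_c^\infty(\mathbb{R}^d)$ functions.

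This is what the surjectivity step in Lemma~\ref{lemma-17-iso} actually requires, since $T_\mu\mathscr{P}_2(\mathbb{R}^d)$ is defined as the closure of $\nabla C_c^\infty(\mathbb{R}^d)$. The interior $C^\infty(\Omega)$ statement alone does not provide it. Your preliminary remark that $\phi\in L^2(\Omega,\mu)$, via the Hardy--Poincar\'e inequality, is needed in either route for the norm in the statement to be finite.
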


\begin{proof}[Proof of Lemma~\ref{lemma-18-dense}]
Symmetry and non-negativity is the direct result of $D_{W}^{2}\mathcal{F}(\mu)$ being the Hessian of the minimizer. For the later, if $D_{W}^{2}\mathcal{F}(\mu)$ is not non-negativity, we can find $X$ such that 
\begin{equation*}
    (X, D_{W}^{2}\mathcal{F}(\mu)X)_{\mu} < 0,
\end{equation*}
and perpetuating $\mu$ in the $X$ direction will destroy the minimality of $\mu$, which leads to a contradiction. 

In the following, we focus on the Wasserstein Hessian $D_{W}^{2}\mathcal{F}(\mu)$ (given in Proposition~\ref{propo-12-wass}) being well defined for smooth gradient vector fields, regarding the density in Lemma~\ref{lemma-13-density}, and equivalence between $H^{1}(\mathbb{R}^d, \mu)/\mathbb{R}$ and $T_{\mu}\mathscr{P}_{2}(\mathbb{R}^d)$ as stated in Lemma~\ref{lemma-17-iso}. For bounded smooth vector fields, it is direct to see that the internal energy part
\begin{equation*}
    (X, D^{2}_{W}\mathcal{S}(\mu) X)_{\mu} = \int_{\mathbb{R}^{d}} \|\nabla X\|^{2}_{\mathrm{HS}} + (m - 1) (\mathrm{div} X)^{2} \ud \mu^{m}
\end{equation*}
is well-defined as $\mu^{m}$ is bounded.

We proceed to show that the interaction part $D_{W}^2\mathcal{I}(\mu)$ is well-defined for smooth vector fields under regularity assumption in Assumption~\ref{assumption-1}. Consider polar coordinate
\begin{equation*}
    r = |x - y| \text{ and }\theta = (x - y)/|x - y|,
\end{equation*}
we apply the gradient formula in spherical coordinates \cite[Chapter 1]{dai2013approximation} to show that
\begin{equation*}
    D^{2}W(x - y) = W_{\mathrm{ra}}^{\prime \prime}(r) (\theta \otimes \theta) + \frac{W_{\mathrm{ra}}^{\prime}(r)}{r} (\mathrm{id} - \theta \otimes \theta).
\end{equation*}
where $W_{\mathrm{ra}}$ is the radial profile of $W$ defined such that $W_{\mathrm{ra}}(|x|) = W(x)$. By leveraging this formula, we can get
\begin{equation*}
    (X, D^{2}_{W}\mathcal{I}(\mu) X)_{\mu} = \frac{1}{2} \int_{\mathbb{R}^{d} \times \mathbb{R}^{d}} W^{\prime \prime}_{\mathrm{ra}}(r) \times |\delta_{r} X|^{2} + \frac{W_{\mathrm{ra}}^{\prime}(r)}{r} \times |\delta_{\perp} X|^{2} \ud \mu \otimes \mu,
\end{equation*}
where $\delta_{r} X$ is the projection of $\delta_{y \to x} X$ onto the direction $\theta$, and $\delta_{\perp} X = X - \delta_{r} X$ is the orthogonal part. Based on the Lipschitz continuity of $X$ in the interior of $B_{R}$, we have
\begin{equation*}
    |\delta_{r} X|,\, |\delta_{\perp} X| \le |\delta_{y \to x} X| \lesssim |x - y| = r.
\end{equation*}
By using $0 < W_{\mathrm{ra}}^{\prime}(r) \lesssim r^{k - 1}$, $ |W_{\mathrm{ra}}^{\prime \prime}(r)| \lesssim r^{k - 2}$ from Assumptions~\ref{assumption} and~\ref{assumption-1}, we have
\begin{equation*}
    \begin{aligned}
        \big|\big(X, D^{2}_{W}\mathcal{I}(\mu) X\big)_{\mu}\big| &\lesssim \int_{\mathbb{R}^{d} \times \mathbb{R}^{d}} r^{2} |W^{\prime \prime}_{\mathrm{ra}}(r)| + r W_{\mathrm{ra}}^{\prime}(r) \ud \mu \otimes \mu\\
        &\lesssim \int_{B_{R} \times B_{R}} r^{k} \ud \mu \otimes \mu = \int_{B_{R} \times B_{R}} |x - y|^{k} \ud \mu \otimes \mu.
    \end{aligned}
\end{equation*}
As $\mu$ is bounded, we then apply H\"older's inequality to get
\begin{equation*}
    \int_{B_{R} \times B_{R}} |x - y|^{k} \ud \mu \otimes \mu \le \int_{B_{R}} |z|^{k} \ud z \times \|\mu\|_{L^{\infty}(B_{R})} \int_{B_{R}} \ud \mu  < \infty.
\end{equation*}
where the last inequality is given by the local integrability of the singularity $|x|^k$ for $-d < k$.
\end{proof}

\bibliographystyle{alpha}
%\bibliography{references}

\end{document}